\newtheorem{Def}{Definition}[section]
\newtheorem{Th}{Theorem}[section]
\newtheorem{Ex}{Example}[section]
\newtheorem{Lemma}{Lemma}[section]
\newtheorem{Prop}{Proposition}[section]
\newtheorem{Cor}{Corollary}[section]
\newtheorem{Rem}{Remark}[section]
\DeclareMathOperator{\Int}{{Int}}
\DeclareMathOperator{\non}{{non}}
\begin{document}

\title[On localization of the Menger property]{On localization of the Menger property}

\author[D. Chandra, N. Alam]{Debraj Chandra$^*$, Nur Alam$^*$}
\newcommand{\acr}{\newline\indent}
\address{\llap{*\,}Department of Mathematics, University of Gour Banga, Malda-732103, West Bengal, India}
\email{debrajchandra1986@gmail.com, nurrejwana@gmail.com}

\thanks{The second author
is thankful to University Grants Commission (UGC), New Delhi-110002, India for granting UGC-NET Junior Research Fellowship (1173/(CSIR-UGC NET JUNE 2017)) during the tenure of which this work was done.}

\subjclass{Primary: 54D20; Secondary: 54B15, 54C10, 54D40, 54D99, 54G10}

\maketitle

\begin{abstract}
In this paper we introduce and study the local version of the Menger property, namely locally Menger property (or, locally Menger space). We explore some preservation like properties in this space. We also discuss certain situations where this local property behaves somewhat differently from the classical Menger property. Some observations about the character of a point, network weight and weight in this space are also investigated carefully. We also introduce the notion of Menger generated space (in short, MG-space) and make certain investigations in this space. Several topological observations on the decomposition and the remainder of locally Menger spaces are also discussed.
\end{abstract}
\smallskip
\noindent{\bf\keywordsname{}:} {Menger property, locally Menger, MG-space, decomposition, remainder, nearly perfect mapping, meshing}

\section{Introduction}
A methodical lesson of selection principles was started by Scheepers \cite{coc1} (see also \cite{coc2}). Since then the field has flourished into one of the most active research areas of set theoretic topology. Subsequently several topological properties have been defined and characterized in terms of selection principles. Studies of selection principles and their interrelationship have a long renowned history. Readers interested in selection principles and their current works can consult the papers \cite{survey,survey1,WMP,ARWR,DCPD,BPFS,WMB,MSPC,
PRSP,LjSM} where more references can be found. 
One of the important selection principle is the Menger property (see \cite{coc1,coc2}). The study on the localization of the classical selection principles is not well known. The purpose of this paper is to work on the localization of the Menger property.

This paper is organised as follows. In Section 3, we introduce a more general class of topological spaces, namely locally Menger spaces. We obtain some equivalent formulations of this space under regularity conditions. Also we make certain observations in $P$-spaces. It is shown that a locally Menger Hausdorff $P$-space can be densely embedded in a Menger Hausdorff space. We are able to provide certain insight into the character of a point, network weight and weight in the context of locally Menger $P$-spaces.
In Section 4, we present preservation like properties under topological operations in locally Menger spaces. It is observed that some of these properties do not hold in general for Menger spaces. Later in this section, we introduce the notion of Menger generated spaces (in short, MG-spaces) and show that every such space can be obtained as a quotient of some locally Menger space. We also make several investigations in these spaces. In the remaining part of this section, we study the decomposition \cite{Willard} of locally Menger spaces. The class $\mathfrak{M}$ of all MG-spaces can be shown to be identical with the class of all spaces which are obtained as a decomposition of locally Menger spaces. The final section is devoted to study the remainder
of locally Menger spaces. We also define nearly perfect mapping (a closed continuous surjective mapping between two topological spaces is called nearly perfect if every fiber is Menger) and obtain few interesting observations.

\section{Preliminaries}
Throughout the paper $(X,\tau)$ stands for a topological space. For undefined notations and terminologies, see \cite{Engelking}.

A space $X$ is said to be Menger if for each sequence $(\mathcal{U}_n)$ of open covers of $X$ there is a sequence $(\mathcal{V}_n)$ such that for each $n\in \mathbb{N}$, $\mathcal{V}_n$ is a finite subset of $\mathcal{U}_n$ and $\cup_{n\in\mathbb{N}}(\cup \mathcal{V}_n)=X$ \cite{coc1}.

 It is to be noted that every $\sigma$-compact space is Menger and every Menger space is Lindel\"{o}f, and the Menger property is hereditary for $F_\sigma$ subsets and continuous images. Also the Menger property is preserved under countable unions \cite{coc2}.

For a subset $A$ of a space $X$, $\overline{A}^X$ denotes the closure of $A$ in $X$. If no confusion arises, $\overline{A}^X$ can be denoted by $\overline{A}$.
A subset $L$ of a space $X$ is said to be locally closed at a point $x\in L$ if there is a neighbourhood $V$ of $x$ in $X$ such that $L\cap V$ is a closed subset of the subspace $V$. $L$ is said to be locally closed in $X$ if it is locally closed at each $x\in L$. 

The weight $w(X)$ of $X$ is the smallest possible cardinality of a base for $X$ and the character $\chi(x,X)$ of a point $x$ in  $X$ is the smallest cardinality of a local base for $x$. A family $\mathcal{N}$ of subsets of  $X$ is said to be a network for $X$ if for each $x\in X$ and any neighbourhood $U$ of $x$ there exists a $A\in\mathcal{N}$ such that $x\in A\subseteq U$. The network weight $nw(X)$ of  $X$ is defined as the smallest cardinal number of the form $|\mathcal{N}|$, where $\mathcal{N}$ is a network for $X$. Clearly $nw(X)\leq w(X)$ and $nw(X)\leq |X|$. It is also clear that if $\mathcal{N}$ is a network of  $X$ such that $|\mathcal{N}|\leq\kappa$, then $X$ has a dense subset of cardinality less than or equal to $\kappa$.

Also, for a collection $\mathcal{C}$ of subsets of $X$, let $\mathcal{C}_\delta$ denote the collection of all sets that can be expressed as the intersection of some nonempty subcollection of $\mathcal{C}$ and let $\mathcal{C}_{\delta,\sigma}$ denote the collection of all sets that can be expressed as the union of some subcollection of $\mathcal{C}_\delta$. We say that $\mathcal{C}$ is a source for a space $Y$ in $X$ if $Y$ is a subspace of $X$ such that $Y\in\mathcal{C}_{\delta,\sigma}$. A source $\mathcal{C}$ for $Y$ in $X$ is called open (closed) if every member of $\mathcal{C}$ is  open (respectively, closed) in $X$ and a source $\mathcal{C}$ is countable if $\mathcal{C}$ is countable.

Let $\mathbb{N}^\mathbb{N}$ be the Baire space. A natural pre-order $\leq^*$ on $\mathbb{N}^\mathbb{N}$ is defined by $f\leq^*g$ if and only if $f(n)\leq g(n)$ for all but finitely many $n$. A subset $D$ of $\mathbb{N}^\mathbb{N}$ is said to be dominating if for each $g\in\mathbb{N}^\mathbb{N}$ there exists a $f\in D$ such that $g\leq^* f$. It can be observed that any Menger subspace of $\mathbb{N}^\mathbb{N}$ is not dominating \cite{coc2}.
Let $\mathfrak{d}$ be the minimum cardinality of a dominating subset of $\mathbb{N}^\mathbb{N}$ and $\mathfrak{c}$ be the cardinality of the set of reals. It is well known that $\aleph_0<\mathfrak{d}\leq\mathfrak{c}$.

Recall that a family $\mathcal{A}\subseteq P(\mathbb{N})$ is said to be an almost disjoint family if each $A\in\mathcal{A}$ is infinite and for any two distinct elements $B,C\in\mathcal{A}$, $|B\cap C|<\aleph_0$. For an almost disjoint family $\mathcal{A}$, let $\Psi(\mathcal{A})=\mathcal{A}\cup\mathbb{N}$ be the Isbell-Mr\'{o}wka space. It is well known that $\Psi(\mathcal{A})$ is a locally compact zero-dimensional Hausdorff space \cite{Mrowka}.

\section{Certain investigations on the locally Menger spaces}
\subsection{The locally Menger property}
We first introduce the main definition of the paper.
\begin{Def}
 A space $X$ is said to have the locally Menger property if for each $x\in X$ there exist an open set $U$ and a Menger subspace $Y$ of $X$ such that $x\in U\subseteq Y$.
\end{Def}
We also call a space $X$ is locally Menger if $X$ has the locally Menger property.
It is immediate that every Menger space is locally Menger and for a Lindel\"{o}f space, Menger and locally Menger properties are equivalent.

Recall that a space $X$ is said to be locally $\sigma$-compact (respectively, locally Lindel\"{o}f) if for each $x\in X$ there exist an open set $U$ and a $\sigma$-compact (respectively, Lindel\"{o}f) subspace $Y$ of $X$ such that $x\in U\subseteq Y$. Clearly every locally $\sigma$-compact space is locally Menger and every locally Menger space is locally Lindel\"{o}f.


The following example shows that there exists a Tychonoff locally Menger space which is not Menger.
\begin{Ex}
\label{E1}
Let $\mathcal{A}\subseteq P(\mathbb{N})$ be an almost disjoint family of cardinality $\mathfrak{c}$ and $\Psi(\mathcal{A})$ be the Isbell-Mr\'{o}wka space. $\Psi(\mathcal{A})$ is locally Menger as it is locally compact. Observe that $\mathcal A$ is not Lindel\"of as it is a discrete subspace of cardinality $\mathfrak c$. Since the Menger property is preserved under closed subspaces, it follows that $\Psi(\mathcal A)$ is not Menger.
\end{Ex}

By \cite[Theorem 5.1]{coc2}, there exists a separable metric space $X$ with $|X|=\omega_1$ which is Menger but not $\sigma$-compact. It is easy to observe that every locally $\sigma$-compact Lindel\"{o}f space is $\sigma$-compact. Clearly such $X$ is locally Menger but not locally $\sigma$-compact.

Let $\mathbb{P}$ be the space of irrationals. It is well known that $\mathbb{P}$ is Lindel\"{o}f but not Menger. Hence $\mathbb{P}$ is locally Lindel\"{o}f but not locally Menger.

We now observe the following characterization of the locally Menger property in regular spaces.
\begin{Th}
\label{T7}
Let $X$ be a regular space. The following assertions are equivalent.
\begin{enumerate}[wide,label={\upshape(\arabic*)},
leftmargin=*]
\item $X$ is locally Menger.

\item For each $x\in X$ and for each open set $V$ with $x\in V$ there exist an open set $U$ and a Menger subspace $Y$ of $X$ such that $x\in U\subseteq Y\subseteq V$.

\item For each $x\in X$ there exists an open set $U$ with $x\in U$ such that $\overline{U}$ is Menger.

\item $X$ has a basis consisting of closed Menger neighbourhoods.\\
\end{enumerate}
\end{Th}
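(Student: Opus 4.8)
The plan is to prove the four assertions equivalent by the cyclic chain $(1)\Rightarrow(3)\Rightarrow(4)\Rightarrow(2)\Rightarrow(1)$, leaning throughout on two standing facts. The first is the usual characterization of regularity: for every $x\in X$ and every open $V$ with $x\in V$ there is an open $W$ with $x\in W\subseteq\overline{W}\subseteq V$. The second is that, since the Menger property is hereditary for $F_\sigma$ (in particular closed) subsets, any closed subspace of a Menger space is again Menger. The recurring device will be to shrink a given open set, via regularity, until its closure sits inside a known Menger subspace, where closed-heredity then applies.

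For $(1)\Rightarrow(3)$ I would fix $x$ and use $(1)$ to get an open $U$ and a Menger subspace $Y$ with $x\in U\subseteq Y$. Regularity yields an open $W$ with $x\in W\subseteq\overline{W}\subseteq U\subseteq Y$. As $\overline{W}$ is closed in $X$ and contained in $Y$, it is closed in $Y$, hence Menger; taking $W$ as the required open set gives $(3)$. For $(3)\Rightarrow(4)$ I would fix $x$ and an arbitrary open $V\ni x$ and manufacture a closed Menger neighbourhood of $x$ inside $V$: choose by $(3)$ an open $U_0\ni x$ with $\overline{U_0}$ Menger, apply regularity to the open set $V\cap U_0$ to obtain an open $W$ with $x\in W\subseteq\overline{W}\subseteq V\cap U_0$, and note that $\overline{W}$ is a closed subset of the Menger space $\overline{U_0}$, hence Menger. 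Since $x\in W\subseteq\Int\overline{W}$, the set $\overline{W}$ is a closed (Menger) neighbourhood of $x$ contained in $V$; letting $x$ and $V$ vary, these sets constitute a basis of closed Menger neighbourhoods.

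For $(4)\Rightarrow(2)$ I would, given $x$ and open $V\ni x$, extract from $(4)$ a closed Menger neighbourhood $N\subseteq V$ of $x$ and set $U=\Int N$ and $Y=N$; then $x\in U\subseteq Y\subseteq V$ with $Y$ Menger, which is exactly $(2)$. Finally $(2)\Rightarrow(1)$ is immediate on specializing to $V=X$.

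The steps are essentially set-theoretic containments, so I do not anticipate a serious obstacle. The one point that merits care, and which recurs in $(1)\Rightarrow(3)$ and $(3)\Rightarrow(4)$, is the reduction \emph{``$\overline{W}$ is closed in $X$ and lies inside a Menger subspace, hence closed in that subspace, hence Menger''}; this is precisely where regularity (used to draw $\overline{W}$ inside the Menger witness) meets the closed-hereditary property of Menger. The only genuinely interpretive matter is reading $(4)$ correctly, namely that ``a basis consisting of closed Menger neighbourhoods'' means a neighbourhood basis at each point whose members are closed Menger sets, which is exactly what the construction in $(3)\Rightarrow(4)$ produces.
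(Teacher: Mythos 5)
Your proof is correct, and every step checks out: regularity is used to pull the closure of a shrunken open set inside the Menger witness, and the closed-hereditary property of the Menger property (hereditary for $F_\sigma$, hence for closed, subsets) then makes that closure Menger; the implications $(4)\Rightarrow(2)\Rightarrow(1)$ need no regularity at all; and your reading of condition $(4)$ as a neighbourhood base at each point consisting of closed Menger sets is the intended one (a base of open sets could not consist of closed neighbourhoods). For comparison: the paper states Theorem~\ref{T7} without any proof, treating it as routine, so there is no argument of the authors' to measure yours against. The closest thing is the paper's proof of the parallel result for Hausdorff $P$-spaces (Theorem~\ref{T71}), where regularity is unavailable and the implication $(1)\Rightarrow(2)$ must instead be manufactured from Lindel\"ofness of the Menger witness together with Hausdorff separation and the $P$-space property (a countable intersection of open sets being open); your argument is precisely the simpler route that regularity affords. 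It is also worth noting that your bookkeeping of where regularity enters --- only in $(1)\Rightarrow(3)$ and $(3)\Rightarrow(4)$ --- matches the paper's examples following the theorem, which exhibit non-regular spaces satisfying $(1)$ and $(3)$ but not $(2)$ and $(4)$, and others satisfying $(1)$ and $(2)$ but not $(3)$ and $(4)$.
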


By Example~\ref{E1}, it can be easily concluded that none of the conditions of Theorem~\ref{T7} implies that $X$ is Menger.

It is important to note that the regularity condition is essential in Theorem~\ref{T7}.
\begin{Ex}\hfill
\begin{enumerate}[wide=0pt,label={\upshape\bf(\arabic*)},
ref={\theEx (\arabic*)}]
\item \label{EN02} Let $X=\mathbb{N}^\mathbb{N}$ be the Baire space and $aX$ be the one point compactification of $X$. It is easy to observe that $aX$ is Menger which is not regular.
We now claim that $X$ is not locally Menger. Assume the contrary. If possible let $X$ be locally Menger. We can choose a nonempty open set $U$ and a Menger subspace $Y$ such that $U\subseteq Y$. Since every nonempty open subset of the Baire space is dominating, it follows that $Y$ is dominating. Which contradicts the fact that any Menger subspace of $X$ is not dominating. An easy verification shows that $aX$ satisfies the conditions $(1)$ and $(3)$. It can also be shown that $aX$ does not satisfy the conditions $(2)$ and $(4)$ as $X$ is not locally Menger.\\
\item \label{E14} Let $X$ be the real line $\mathbb R$ with the particular point $p$ topology \cite{Lynn}. Clearly $X$ is neither regular nor Menger. Since every nonempty open set in $X$ is dense in $X$, $X$ does not satisfy the conditions $(3)$ and $(4)$. It can be easily shown that $X$ satisfies the other two conditions.
    \end{enumerate}
\end{Ex}

\begin{Rem}
We remark that none of the conditions of Theorem~\ref{T7} implies that $X$ is regular.
 Let $X$ be the set of all irrational numbers in $(0,\infty)$ and $A$ be the set of irrational numbers in $(0,1)$. For each $x\in A$, let $A_x=\{n+x : n=0,1,2,\dotsc\}$ and choose $Y=\{A_x : x\in A\}$. Consider the topology $\tau=\{\cup B \; :\; B\subseteq Y\}$ on $X$. Clearly $X$ is not regular as it is not Hausdorff. Observe that $X$ satisfies condition $(2)$. Let $x\in X$ and choose $V\in\tau$ such that $x\in V$. Write $x=k+y$ for some non negative integer $k$ and some $y\in A$. Clearly $A_y$ is an open set contained in $V$ and also $A_y$ is Menger as it is compact. Thus $X$ satisfies condition $(2)$ and also $(1)$ of Theorem~\ref{T7}.

 To show that condition $(4)$ is satisfied, let $x\in X$ and $V$ be an open subset of $X$ containing $x$. Then for some $y\in A$ there exists an open subset $A_y$ of $X$ containing $x$ such that $A_y$ is Menger and  $A_y\subseteq V$. It is clear that $\overline{A_y}=A_y$ and the condition $(4)$ is satisfied. It also follows that $X$ satisfies condition $(3)$.
\end{Rem}

Recall that the radial interval topology on $\mathbb{R}^2$ is generated by the basis consisting all open intervals disjoint from the origin which lie on the lines passing through the origin, together with the sets of the form $\cup\{E_\theta : 0\leq\theta<\pi\}$, where each $E_\theta$ is a nonempty  open interval centered at the origin on the line of slope $\tan \theta$ \cite{Lynn}.

In the following example we show that a regular space is not always locally Menger (not even locally Lindel\"{o}f).
\begin{Ex}
\label{P10}
 Let $X=\mathbb{R}^2$ with the radial interval topology. Clearly $X$ is regular. We now observe that $X$ is not locally Menger. It is enough to show that $X$ is not locally Lindel\"{o}f. Assume the contrary. Choose an open set $U$ containing the origin and a Lindel\"{o}f subspace $Y$  such that $U\subseteq Y$. By regularity we can find an open set $V$ containing the origin such that $\overline{V}\subseteq U$. Clearly $\overline{V}$ is Lindel\"{o}f. Now $V=\cup_{0\leq\theta<\pi}E_\theta$, for some nonempty open interval $E_\theta$ centered at origin on the line of slope $\tan\theta$. For convenience choose $E_\theta =(x_\theta,y_\theta)$ and let $F_\theta$ be the middle half of $E_\theta$. Choose $G=\cup_{0\leq\theta<\pi}F_\theta$ and $H_\theta = N_\theta\setminus\{(0,0)\}$, where $N_\theta$ is the line passing through the origin and is of the slope $\tan\theta$. Clearly $\overline{V}=\cup_{0\leq\theta<\pi}\overline{E}_\theta$ and $\{G\}\cup\{H_\theta : 0\leq\theta<\pi\}$ is a cover of $\overline{V}$ by open sets in $X$. We claim that $\overline{V}$ is not Lindel\"of. If possible let there exists a countable subfamily $\{G\}\cup\{H_{\theta_n} : n\in\mathbb{N}\}$ that covers $\overline{V}$. Now observe that if for each $n$ $\theta\neq\theta_n$, then $x_\theta\notin H_{\theta_n}$. By the construction of $G$ it is clear that $x_\theta\notin G$ and hence $x_\theta\notin\overline{V}$, a contradiction. Thus $\overline{V}$ can not be Lindel\"{o}f, contradicting our initial assumption that $X$ is locally Lindel\"{o}f.
\end{Ex}

Thus there is a regular space which does not satisfy any of the conditions of Theorem~\ref{T7}.


If $\mathcal P$ is a property, then $\non(\mathcal P)$ is the minimum cardinality of a set of reals that fails have to property $\mathcal P$. It is well known that $\non(\text{Menger})=\mathfrak{d}$ (see \cite{coc2}). Since in a Lindel\"{o}f space every locally Menger space is Menger, we obtain $\non(\text{locally Menger})=\mathfrak{d}$. Thus we can say that every locally Menger subspace of the Baire space $\mathbb{N}^\mathbb{N}$ is non dominating.


%

\subsection{Some observations on $P$-spaces}
Next we turn our attention to $P$-spaces. Recall that a $P$-space is a topological space in which countable intersection of open sets is open. An equivalent condition is that countable union of closed sets is closed. It is obvious that Lindel\"{o}f subspace of a Hausdorff $P$-space is closed. Also observe that every locally Menger subspace of a Hausdorff $P$-space $X$ is of the form $U\cap F$, where $U$ is open and $F$ is closed in $X$.




The following equivalent formulations of the locally Menger property in Hausdorff $P$-spaces can be observed (compare with Theorem~\ref{T7}).
\begin{Th}
\label{T71}
Let $X$ be a Hausdorff $P$-space. The following assertions are equivalent.
\begin{enumerate}[wide,label={\upshape(\arabic*)},leftmargin=*]
\item $X$ is locally Menger.
\item For each $x\in X$ and for each open set $V$ with $x\in V$ there exist an open set $U$ and a Menger subspace $Y$ of $X$ such that $x\in U\subseteq Y\subseteq V$.
\item For each $x\in X$ there exists an open set $U$ with $x\in U$ such that $\overline{U}$ is Menger.
\item $X$ has a basis consisting of closed Menger neighbourhoods.
\end{enumerate}
\end{Th}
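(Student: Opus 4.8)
The plan is to establish the same four-way equivalence as in Theorem~\ref{T7}, but with the use of regularity replaced by a separation property peculiar to Hausdorff $P$-spaces. The first observation I would record is that in a Hausdorff $P$-space every Menger subspace is closed: a Menger space is Lindel\"of, and a Lindel\"of subspace of a Hausdorff $P$-space is closed, as recalled above. Consequently, whenever local Mengerness furnishes $x\in W\subseteq Z$ with $W$ open and $Z$ Menger, the set $Z$ is closed, so $\overline{W}\subseteq Z$, and $\overline{W}$, being a closed subspace of the Menger space $Z$, is itself Menger. This yields $(1)\Rightarrow(3)$ at once; moreover $(2)\Rightarrow(1)$ is trivial by taking $V=X$, and $(4)\Rightarrow(2)$ follows by reading off the interior and the closed Menger set from a basic closed Menger neighbourhood that lies inside $V$.

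The one genuinely new ingredient, and the step I expect to be the main obstacle, is $(3)\Rightarrow(4)$, where a closed Menger neighbourhood must be fitted inside a prescribed open $V$ without the luxury of regularity. To handle this I would first isolate a separation lemma: in a Hausdorff $P$-space, a point $x$ and a Lindel\"of set $L$ with $x\notin L$ can be placed in disjoint open sets. Its proof combines the hypotheses exactly as one expects. Hausdorffness gives, for each $y\in L$, disjoint open sets $A_y\ni x$ and $B_y\ni y$; Lindel\"ofness of $L$ extracts a countable subcover $\{B_{y_n}\}$ of $L$; and the $P$-space axiom makes $A:=\bigcap_n A_{y_n}$ open. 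Then $A\ni x$ and $B:=\bigcup_n B_{y_n}\supseteq L$ are disjoint, and since $A\subseteq X\setminus B$ one even obtains $\overline{A}\cap L=\emptyset$. This separation of a point from a Lindel\"of set is precisely the surrogate for the regularity used in Theorem~\ref{T7}.

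With the lemma available, $(3)\Rightarrow(4)$ proceeds as follows. Given $x\in V$ with $V$ open, condition $(3)$ supplies an open $W\ni x$ with $\overline{W}$ Menger, hence Lindel\"of. The set $\overline{W}\setminus V$ is closed in $\overline{W}$, so it is Lindel\"of and omits $x$; applying the lemma produces an open $A\ni x$ with $\overline{A}\cap(\overline{W}\setminus V)=\emptyset$ (the case $\overline{W}\setminus V=\emptyset$ being immediate). Setting $U:=W\cap A$, I would verify that $\overline{U}\subseteq\overline{W}$ and $\overline{U}\subseteq\overline{A}$, so that $\overline{U}$ misses $\overline{W}\setminus V$ and therefore $\overline{U}\subseteq\overline{W}\cap V\subseteq V$; moreover $\overline{U}$ is a closed subspace of the Menger space $\overline{W}$ and is thus Menger. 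Hence $\overline{U}$ is a closed Menger neighbourhood of $x$ contained in $V$, and letting $x$ and $V$ range over all points and their neighbourhoods produces the required basis, giving $(4)$ (and, reading $Y=\overline{U}$, also $(2)$). Assembling the implications as $(1)\Rightarrow(3)\Rightarrow(4)\Rightarrow(2)\Rightarrow(1)$ then completes the equivalence.
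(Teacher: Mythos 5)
Your proof is correct and rests on exactly the same two ingredients as the paper's: that Menger subspaces of a Hausdorff $P$-space are closed (via Lindel\"of), and that a point can be separated from a disjoint Lindel\"of set by combining Hausdorffness, a countable subcover, and the $P$-space property to intersect countably many open sets. The paper inlines this separation argument in a direct proof of $(1)\Rightarrow(2)$ (leaving the remaining implications to the reader) rather than isolating it as a lemma and cycling through $(3)\Rightarrow(4)$, but the mathematical content is identical.
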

\begin{proof}
We only give proof of $(1)\Rightarrow(2)$. Let $x\in X$ and $V$ be an open set containing $x$. Choose an open set $U$ and a Menger subspace $Y$ of $X$ such that $x\in U\subseteq Y$. Then $Y$ is closed and hence $Y\cap(X\setminus V)$ is Menger. Now let $y\in Y\cap(X\setminus V)$. Then there exist disjoint open sets $U_y$ and $V_y$ containing $x$ and $y$ respectively. Now the cover $\{V_y : y\in Y\cap(X\setminus V)\}$ of $Y\cap(X\setminus V)$ by open sets in $X$ has a countable subfamily $\{V_y : y\in C\}$ that covers $Y\cap(X\setminus V)$, where $C$ is a countable subset of $Y\cap(X\setminus V)$. Choose $P=\cap_{y\in C}U_y$ and $Z=\cup_{y\in C}V_y$. Now it easy to observe that $P$ and $Z$ are disjoint open sets containing $x$ and $Y\cap(X\setminus V)$ respectively. Clearly $ W =P\cap \Int(Y)$ is an open set containing $x$ and $\overline{ W }\subseteq Y$. Thus $\overline{ W }$ is the required Menger subspace satisfying $\overline{ W }\subseteq V$.
%
\end{proof}

The following result on embedding into Menger Hausdorff $P$-space can be obtained.
\begin{Th}
\label{T20}
Every locally Menger Hausdorff $P$-space can be densely embedded in a Menger Hausdorff $P$-space.
\end{Th}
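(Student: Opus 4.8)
The plan is to build a one-point extension of $X$ in direct analogy with the one-point compactification, but with ``compact'' replaced by ``closed Menger''. First dispose of the trivial case: if $X$ is already Menger there is nothing to do, since $X$ is densely (indeed homeomorphically) embedded in itself. So assume $X$ is \emph{not} Menger, fix a point $\infty\notin X$, set $X^*=X\cup\{\infty\}$, and declare $O\subseteq X^*$ to be open precisely when either $\infty\notin O$ and $O$ is open in $X$, or $\infty\in O$ and $X\setminus O$ is a closed Menger subspace of $X$.

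The first batch of work is to verify that this prescription is a topology and that it makes $X^*$ a Hausdorff $P$-space in which $X$ sits as a dense open subspace. The stability properties needed are exactly those recorded in the Preliminaries: the Menger property is hereditary for $F_\sigma$ (in particular closed) subsets, so arbitrary intersections of closed Menger sets are again closed Menger (being closed subsets of any one of them); and the Menger property is preserved under finite and countable unions. These give closure of the family of neighbourhoods of $\infty$ under finite intersections and arbitrary unions. The countable-union case is precisely where the hypothesis on $X$ enters twice: countable unions of closed sets stay closed because $X$ is a $P$-space, and countable unions of Menger sets stay Menger, which together force $X^*$ to again be a $P$-space. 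Hausdorffness between two points of $X$ is inherited from $X$; to separate $x\in X$ from $\infty$ I would invoke Theorem~\ref{T71}(3) to obtain an open $U\ni x$ with $\overline U$ Menger, whence $U$ and $\{\infty\}\cup(X\setminus\overline U)$ are disjoint open sets. Density of $X$ is where non-Mengerness of $X$ is used: every basic neighbourhood $\{\infty\}\cup(X\setminus F)$ of $\infty$ has $F\neq X$, hence $X\setminus F\neq\emptyset$, so the neighbourhood meets $X$.

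The heart of the argument, and the step I expect to be the main obstacle, is showing that $X^*$ is Menger. Given a sequence $(\mathcal U_n)$ of open covers of $X^*$, I would first choose $W\in\mathcal U_1$ with $\infty\in W$; by the definition of the topology $F:=X\setminus W$ is a closed Menger subspace of $X$, and $W$ alone already covers $\infty$ together with $X\setminus F$. It then remains to cover the single closed Menger piece $F$, and here I would apply the Menger property of $F$ to the restricted sequence $(\mathcal U_{n+1})_n$ of open covers of $F$, extracting finite subfamilies $\mathcal V_{n+1}\subseteq\mathcal U_{n+1}$ whose union covers $F$. Setting $\mathcal V_1=\{W\}$ completes the selection, since $\bigcup_n\bigcup\mathcal V_n\supseteq W\cup F=X^*$. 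The points that require care here are that the subspace topology induced by $X^*$ on $X$ (hence on $F$) coincides with the original topology of $X$, so that the Menger property of $F$ genuinely licenses a selection inside $X^*$, and the routine re-indexing of the covers; neither is deep, but both must be stated. Finally I would record that the inclusion $X\hookrightarrow X^*$ is a homeomorphism onto a dense open subspace, which delivers the desired dense embedding into the Menger Hausdorff $P$-space $X^*$.
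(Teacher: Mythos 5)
Your proposal is correct and follows essentially the same route as the paper: both construct the one-point extension $X\cup\{\infty\}$ whose neighbourhoods of the added point are complements of Menger subsets of $X$ (closedness of these being automatic for Menger subspaces of a Hausdorff $P$-space), verify that the extension is a Hausdorff $P$-space containing $X$ densely, and check the Menger property directly. The only cosmetic difference is in that last step: you select a single $W\in\mathcal{U}_1$ containing $\infty$ and cover the closed Menger remainder $X\setminus W$ using the re-indexed covers, whereas the paper selects $U_n\in\mathcal{U}_n$ containing the new point for every $n$ and covers the countable (hence Menger) union $\bigcup_{n}(X'\setminus U_n)$.
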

\begin{proof}
 Let $(X,\tau)$ be a locally Menger Hausdorff $P$-space. It is enough to consider the case when $X$ is not Menger. Choose $p\notin X$ and define $X^\prime=X\cup\{p\}$. Consider the topology $\tau^\prime=\tau\cup\{U\subseteq X^\prime : X^\prime\setminus U\;\mbox{is a Menger subset of}\;X\}$ on $X^\prime$.

Clearly $X^\prime$ is a $P$-space. Claim that $X^\prime$ is Hausdorff. Let $x,\;y\in X^\prime$ be such that $x\neq y$. Without loss of generality assume that $x\in X$ and $y\notin X$. Clearly $y=p$ and we can choose an open set $U$ and a Menger subspace $Y$ of $X$ such that $x\in U\subseteq Y$. Consequently $U$ and $X^\prime\setminus Y$ are two disjoint open sets in $X^\prime$ such that $x\in U$ and $y\in X^\prime\setminus Y$.

We now show that $X^\prime$ is Menger. Let $(\mathcal{U}_n)$ be a sequence of open covers of $X^\prime$. For each $n$ choose $U_n\in\mathcal{U}_n$ such that $p\in U_n$. Then $X^\prime\setminus\cap_{n\in\mathbb{N}}U_n$ is a Menger subspace of $X$ and $(\mathcal{U}_n)$ is a sequence of covers of $X^\prime\setminus\cap_{n\in\mathbb{N}}U_n$ by open sets in $X^\prime$. For each $n$ choose a finite set $\mathcal{V}_n^\prime\subseteq\mathcal{U}_n$ such that the sequence $(\mathcal{V}_n^\prime)$ witnesses the Menger property of $X^\prime\setminus \cap_{n\in\mathbb{N}}U_n$. If we define for each $n$ $\mathcal{V}_n=\mathcal{V}_n^\prime\cup\{U_n\}$, then the sequence $(\mathcal{V}_n)$ guarantees for $(\mathcal{U}_n)$ that $X^\prime$ is Menger. It is obvious that $X$ is dense in $X^\prime$ and the inclusion mapping $\iota:X\to X^\prime$ is an embedding of $X$ into $X^\prime$.
\end{proof}

We obtain the following result on the character of a point in a locally Menger space.
\begin{Th}
\label{MT5}
The character $\chi(x,X)$ of a point $x$ in a locally Menger Hausdorff $P$-space $X$ is the smallest cardinal number of the form $|\mathcal{B}|^{\aleph_0}$, where $\mathcal{B}$ is a family of open subsets of $X$ such that $\cap\mathcal{B}=\{x\}$.
\end{Th}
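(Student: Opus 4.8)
The plan is to prove the two inequalities $\chi(x,X)\le\lambda$ and $\chi(x,X)\ge\lambda$, where $\lambda$ denotes the cardinal in the statement, i.e. the least $|\mathcal{B}|^{\aleph_0}$ as $\mathcal{B}$ runs over families of open sets with $\cap\mathcal{B}=\{x\}$. First I would record that, since $\nu\mapsto\nu^{\aleph_0}$ is non-decreasing, this minimum is attained at a family of least possible cardinality, namely a smallest pseudobase at $x$. Then, because the character is a local invariant, I would pass to a neighbourhood on which the Menger property is available: by Theorem~\ref{T71} choose an open $U_0\ni x$ with $\overline{U_0}$ Menger and work inside the closed Menger neighbourhood $M=\overline{U_0}$. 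Three features will be used throughout: $M$ is a Hausdorff $P$-space, so countable intersections of open sets are open; $M$ is Lindel\"of (being Menger), which I will use in its closed-set form, that a family of closed sets with the countable intersection property has nonempty intersection; and closed subspaces of a Menger space are Menger (as noted for Example~\ref{E1}). I will also use the regularity of $X$ that is already implicit in the proof of Theorem~\ref{T71}.

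For $\chi(x,X)\le\lambda$, take a smallest pseudobase $\mathcal{B}$, closed without loss of generality under finite intersections, and let $\mathcal{B}_\delta$ be the family of all countable intersections of its members. By the $P$-space property each member of $\mathcal{B}_\delta$ is open and contains $x$, while $|\mathcal{B}_\delta|\le|\mathcal{B}|^{\aleph_0}=\lambda$. The heart of this direction is to show that the traces on $M$ of $\mathcal{B}_\delta$ form a local base at $x$. Given an open $V\ni x$, I would argue by contradiction: if no countable subintersection of $\mathcal{B}$ lies inside $V$, then the sets $(\cap\mathcal{C})\cap(M\setminus V)$, for $\mathcal{C}$ ranging over countable subfamilies of $\mathcal{B}$, form a downward countably-directed family of nonempty sets in the Lindel\"of space $M\setminus V$ (which is closed in $M$, hence Menger, hence Lindel\"of). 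Passing to closures and invoking the closed-set form of the Lindel\"of property produces a point $z\in M\setminus V$, so $z\ne x$, lying in each of these closures; regularity then upgrades this to $z$ belonging to the corresponding members of $\mathcal{B}$, contradicting $\cap\mathcal{B}=\{x\}$. Hence some countable $\mathcal{C}$ has $\cap\mathcal{C}\subseteq V$, and $\chi(x,X)\le|\mathcal{B}_\delta|\le\lambda$.

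For $\chi(x,X)\ge\lambda$, I would start from a local base $\mathcal{D}$ of size $\chi(x,X)$. Since $X$ is Hausdorff, $\cap\mathcal{D}=\{x\}$, so $\mathcal{D}$ is itself an admissible family and $\lambda\le|\mathcal{D}|^{\aleph_0}$. To convert this into $\lambda\le|\mathcal{D}|=\chi(x,X)$, I would replace $\mathcal{D}$ by its closure $\mathcal{D}_\delta$ under countable intersections, which is still a local base by the $P$-space property, and then show that a minimal countably-closed local base satisfies $|\mathcal{D}_\delta|=|\mathcal{D}_\delta|^{\aleph_0}$, so that $\mathcal{D}_\delta$ already realises the minimum; together with the minimality of $\mathcal{D}$ this would yield $\lambda\le\chi(x,X)$.

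The main obstacle I anticipate is precisely this reverse inequality: establishing that a minimal local base may be taken countably closed and that its cardinality therefore absorbs the $\aleph_0$-th power, which is what forces the character into the form $|\mathcal{B}|^{\aleph_0}$. This is the step that must exploit the $P$-space and Menger hypotheses in full strength, and the place where the interaction between character and pseudocharacter is genuinely delicate. The companion subtlety in the first direction, namely manufacturing an honest open cover out of a mere pseudobase, I expect to be manageable through the closed-set form of the Lindel\"of property combined with the regularity supplied by Theorem~\ref{T71}.
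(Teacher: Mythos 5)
Your first inequality, $\chi(x,X)\le\lambda$, is correct and is in substance exactly the paper's argument. The paper fixes $\{V_\alpha\}_{\alpha\in\Lambda}$ with $\cap_{\alpha\in\Lambda}V_\alpha=\{x\}$, uses Theorem~\ref{T71} to shrink to open sets $U_\alpha$ with $x\in U_\alpha\subseteq\overline{U}_\alpha\subseteq V_\alpha\cap V$ and $\overline{U}_\alpha$ Menger, and then, given open $U\ni x$, covers the closed (hence Menger, hence Lindel\"{o}f) set $\overline{U}_{\alpha_0}\setminus(U\cap\overline{U}_{\alpha_0})$ by the relatively open sets $\overline{U}_{\alpha_0}\setminus\overline{U}_\alpha$ and extracts a countable subfamily, so that countable intersections of the $U_\alpha$ form a local base of size at most $\kappa^{\aleph_0}$. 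Your countable-intersection-property contradiction is just the complementary phrasing of that subcover extraction. The one step you should make precise is the ``regularity upgrade'': do the shrinking \emph{first}, i.e.\ replace each $B\in\mathcal{B}$ by an open $W_B$ with $x\in W_B\subseteq\overline{W}_B\subseteq B$ (this is what Theorem~\ref{T71} supplies); then your point $z$ lies in each $\overline{W_B\cap(M\setminus V)}\subseteq\overline{W}_B\subseteq B$, so $z\in\cap\mathcal{B}=\{x\}$, contradicting $z\notin V$.

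The genuine gap is the reverse inequality $\lambda\le\chi(x,X)$, and it is not a repairable one: the absorption $|\mathcal{D}_\delta|^{\aleph_0}=|\mathcal{D}_\delta|$ you hope to prove can consistently fail, so no ZFC proof of the stated equality exists. Let $X=\omega_1\cup\{p\}$, where points of $\omega_1$ are isolated and neighbourhoods of $p$ are the sets containing $p$ with countable complement (the one-point Lindel\"{o}fication of a discrete space of size $\aleph_1$). This is a zero-dimensional Hausdorff $P$-space, and it is Menger (choose a member containing $p$ from the first cover; the countably many remaining points are then picked off one per cover), hence locally Menger. The clopen sets $B_\beta=\{p\}\cup[\beta,\omega_1)$, $\beta<\omega_1$, form a local base at $p$, and there is no countable local base (a countable family of co-countable sets has co-countable intersection), so $\chi(p,X)=\aleph_1$. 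On the other hand, any family $\mathcal{B}$ of open sets with $\cap\mathcal{B}=\{p\}$ consists of co-countable sets whose complements cover $\omega_1$, so $|\mathcal{B}|\ge\aleph_1$, and therefore the minimum of $|\mathcal{B}|^{\aleph_0}$ equals $\aleph_1^{\aleph_0}=2^{\aleph_0}$. If CH fails, then $\chi(p,X)=\aleph_1<2^{\aleph_0}=\lambda$. So the second direction of your plan cannot be carried out. You should also know that the paper itself has precisely this gap: its proof begins with ``it is sufficient to show $\chi(x,X)\le\kappa^{\aleph_0}$,'' which establishes only $\chi(x,X)\le\lambda$; minimality of a local base gives $\lambda\le\chi(x,X)^{\aleph_0}$, never $\lambda\le\chi(x,X)$. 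In other words, your completed first direction already recovers everything the paper actually proves, and the step you flagged as ``genuinely delicate'' is exactly the step that is missing there and is, in fact, consistently false.
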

\begin{proof}
Let $\{x\}=\cap_{\alpha\in\Lambda}V_\alpha$, where each $V_\alpha$ is open in $X$ and $|\Lambda|\leq\kappa$. It is sufficient to show that $\chi(x,X)\leq\kappa^{\aleph_0}$. Since $X$ is locally Menger, there exist an open subset $V$ of $X$ and a Menger subspace $Y$ of $X$ such that $x\in V\subseteq Y$. By Theorem~\ref{T71}, we have for each $\alpha\in\Lambda$ an open subset $U_\alpha$ of $X$ such that $x\in U_\alpha\subseteq \overline{U}_\alpha\subseteq V_\alpha\cap V$ and $\overline{U}_\alpha$ is Menger. Clearly $\{x\}=\cap_{\alpha\in\Lambda}\overline{U}_\alpha$. Let $U$ be an open set in $X$ containing $x$. Now $\cap_{\alpha\in\Lambda}\overline{U}_\alpha\subseteq U$ and hence $\cap_{\alpha\in\Lambda}\overline{U}_\alpha\subseteq U\cap\overline{U}_{\alpha_0}$ for some fixed $\alpha_0\in\Lambda$. It follows that $\overline{U}_{\alpha_0}\setminus(U\cap\overline{U}_{\alpha_0})\subseteq
\cup_{\alpha\in\Lambda}(\overline{U}_{\alpha_0}\setminus\overline{U}_\alpha)$ i.e. $\{\overline{U}_{\alpha_0}\setminus\overline{U}_\alpha : \alpha\in\Lambda\}$ is a cover of $\overline{U}_{\alpha_0}\setminus(U\cap\overline{U}_{\alpha_0})$ by open sets in $\overline{U}_{\alpha_0}$. Clearly $\overline{U}_{\alpha_0}\setminus(U\cap\overline{U}_{\alpha_0})$ is Menger and hence there exists a countable subfamily $\{\overline{U}_{\alpha_0}\setminus\overline{U}_{\alpha_n} : n\in\mathbb{N}\}$ that covers $\overline{U}_{\alpha_0}\setminus(U\cap\overline{U}_{\alpha_0})$. Consequently $\cap_{n\in\mathbb{N}}\overline{U}_{\alpha_n}\subseteq U$. Thus the collection of all countable intersections of members of $\{U_\alpha : \alpha\in\Lambda\}$ forms a local base for $x$. Since the cardinality of such local base for $x$ does not exceed $\kappa^{\aleph_0}$, we have $\chi(x,X)\leq\kappa^{\aleph_0}$.
\end{proof}

\begin{Lemma}
Let $X$ be a Hausdorff $P$-space.
\begin{enumerate}[label={\upshape(\arabic*)},ref={\theLemma (\arabic*)}]
\item \label{ML2}
There exists a continuous bijective mapping of $X$ onto a Hausdorff $P$-space $Y$ such that $w(Y)\leq nw(X)^{\aleph_0}$.

\item \label{MC2}
If in addition $X$ is Lindel\"{o}f, then $w(X)\leq nw(X)^{\aleph_0}$.
\end{enumerate}
\end{Lemma}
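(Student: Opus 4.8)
The plan is to derive (2) from (1), so the heart of the matter is the condensation in (1). Set $\kappa = nw(X)$ and fix a network $\mathcal{N}$ with $|\mathcal{N}| = \kappa$; I will manufacture a weaker topology $\tau^\prime \subseteq \tau$ on the underlying set of $X$ and take $Y = (X, \tau^\prime)$ together with the identity condensation $\iota \colon X \to Y$. First I would extract from $\mathcal{N}$ a separating family of genuinely open sets. For each ordered pair $(A,B) \in \mathcal{N} \times \mathcal{N}$ admitting disjoint open supersets, fix one such pair of disjoint open sets $U_{A,B} \supseteq A$ and $V_{A,B} \supseteq B$; let $\mathcal{S}_0$ be the collection of all these sets together with $X$ itself. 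Assuming $\kappa$ infinite (the finite case makes $X$ trivial), there are at most $\kappa$ such pairs, so $|\mathcal{S}_0| \le \kappa$. Using Hausdorffness together with the defining property of a network, for distinct $x,y$ one finds disjoint open $U \ni x$, $V \ni y$ and then network members $A,B$ with $x \in A \subseteq U$ and $y \in B \subseteq V$; the pair $(A,B)$ is separable, so $U_{A,B}, V_{A,B}$ are disjoint members of $\mathcal{S}_0$ separating $x$ and $y$. Hence $\mathcal{S}_0$ Hausdorff-separates points.

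The crucial step, and the source of the exponent $\aleph_0$, is to close $\mathcal{S}_0$ under countable intersections: let $\mathcal{S}$ consist of all sets $\bigcap_n S_n$ with each $S_n \in \mathcal{S}_0$. Because $X$ is a $P$-space every such intersection is again $\tau$-open, so $\mathcal{S} \subseteq \tau$; a countable intersection of countable intersections is again one, so $\mathcal{S}$ is closed under countable (hence finite) intersections; and $|\mathcal{S}| \le \kappa^{\aleph_0}$, since there are at most $\kappa^{\aleph_0}$ countable subfamilies of $\mathcal{S}_0$. I would then let $\tau^\prime$ be the topology having $\mathcal{S}$ as a base (legitimate, as $\mathcal{S}$ is closed under finite intersections and contains $X$), so that $w(Y) \le |\mathcal{S}| \le \kappa^{\aleph_0}$. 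That $Y$ is Hausdorff is immediate from $\mathcal{S}_0 \subseteq \mathcal{S}$. To see that $Y$ is a $P$-space, given $\tau^\prime$-open sets $O_n$ and a point $x \in \bigcap_n O_n$, pick $B_n \in \mathcal{S}$ with $x \in B_n \subseteq O_n$; then $B = \bigcap_n B_n \in \mathcal{S}$ and $x \in B \subseteq \bigcap_n O_n$, so the intersection is $\tau^\prime$-open. Finally $\iota$ is a continuous bijection because $\tau^\prime \subseteq \tau$, which finishes (1).

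For (2) I would show that under the extra hypothesis $\iota$ is in fact a homeomorphism, whence $w(X) = w(Y) \le nw(X)^{\aleph_0}$. It suffices to prove that $\iota$ is a closed map. Let $F$ be $\tau$-closed; then $F$ is Lindel\"of in $(X,\tau)$, and since $\tau^\prime \subseteq \tau$ every $\tau^\prime$-open cover of $F$ is a $\tau$-open cover, so $F$ is Lindel\"of in $Y$ as well. As $Y$ is a Hausdorff $P$-space, the already noted fact that a Lindel\"of subspace of a Hausdorff $P$-space is closed yields that $F = \iota(F)$ is $\tau^\prime$-closed. Thus $\iota$ is a continuous closed bijection, hence a homeomorphism.

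The step I expect to require the most care is the verification in (1) that the countable-intersection closure $\mathcal{S}$ simultaneously keeps the weight bounded by $\kappa^{\aleph_0}$, serves as a genuine base, and forces the $P$-space property (here the $P$-space hypothesis on $X$ is used twice: to keep $\mathcal{S}$ inside $\tau$ so that $\iota$ is continuous, and implicitly to justify that closing under countable intersections is the right closure); everything in (2) then follows cleanly from the Lindel\"of-implies-closed phenomenon.
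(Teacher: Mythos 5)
Your proof is correct and takes essentially the same route as the paper: both parts condense $X$ by generating a coarser topology from countable intersections of a fixed family of disjoint open sets chosen for pairs of network members that can be separated, using the $P$-space property of $X$ to keep these intersections open (hence the identity continuous), and both prove (2) by showing the condensation is closed since closed subspaces of the Lindel\"{o}f space $X$ have a covering property preserved by continuous maps, and such subspaces of a Hausdorff $P$-space are closed. Your version of (2) is in fact slightly cleaner, as it correctly runs the argument through Lindel\"{o}fness where the paper's text invokes Mengerness despite the hypothesis being only that $X$ is Lindel\"{o}f.
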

\begin{proof}
\noindent$(1).$ Let $nw(X)=\kappa$ and $\mathcal{N}$ be a network for $X$ with $|\mathcal{N}|=\kappa$. Let $\tau$ be the topology on $X$. We search for pairs $A_1$, $A_2$ of members of $\mathcal{N}$ such that there exist two disjoint open sets $U_1$, $U_2$ $\in\tau$ containing $A_1$ and $A_2$ respectively, and for every such pair select some open sets $U_1$, $U_2$. Let $\mathcal{A}$ be the collection of all open sets obtained in the above process and $\mathcal{B}$ be the collection of all possible countable intersections of members of $\mathcal{A}$. Clearly $\mathcal{B}$ is a base for a topology on $X$ and let $\tau^*$ be the topology on $X$ generated by the base $\mathcal{B}$, we denote this space by $Y$.  By construction, $Y$ is a $P$-space. Since $X$ is a Hausdorff space, $Y$ is also so. Clearly $\tau$ is finer than $\tau^*$ as $X$ is a $P$-space. We now choose the identity mapping from $X$ onto $Y$. Since the cardinality of $\mathcal{B}$ does not exceed $\kappa^{\aleph_0}$, we can conclude that $w(Y)\leq nw(X)^{\aleph_0}$.\\

\noindent$(2).$  By (1), there is a continuous injective mapping $f$ from $X$ onto a Hausdorff P-space $Y$ with $w(Y)\leq nw(X)^{\aleph_0}$. We claim that such $f$ is closed. Let $A$ be a closed set in $X$. Since Mengerness is preserved under closed subspaces and continuous mappings, $f(A)$ is closed in $Y$. Thus $f$ is a homeomorphism and $w(X)\leq nw(X)^{\aleph_0}$.
\end{proof}

\begin{Th}
\label{MT6}
If $X$ is a locally Menger Hausdorff $P$-space, then $w(X)\leq nw(X)^{\aleph_0}$.
\end{Th}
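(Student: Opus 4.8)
The plan is to set $\kappa=nw(X)$, fix a network $\mathcal{N}$ for $X$ with $|\mathcal{N}|=\kappa$, and build a basis for $X$ of cardinality at most $\kappa^{\aleph_0}$ by gluing together bases of suitably chosen Menger neighbourhoods. First I would invoke the local Menger property in the form of Theorem~\ref{T71}$(3)$: for each $x\in X$ pick an open set $U_x\ni x$ whose closure $\overline{U_x}$ is Menger. Since $X$ is a Hausdorff $P$-space, each subspace $\overline{U_x}$ is again a Hausdorff $P$-space, and being Menger it is Lindel\"of.

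The second step is the local weight estimate. Because network weight is hereditary, $nw(\overline{U_x})\leq nw(X)=\kappa$, so Lemma~\ref{MC2} applied to the Lindel\"of Hausdorff $P$-space $\overline{U_x}$ yields $w(\overline{U_x})\leq\kappa^{\aleph_0}$. As weight does not increase on subspaces, $w(U_x)\leq\kappa^{\aleph_0}$; fixing a basis $\mathcal{B}_x$ of the subspace $U_x$ with $|\mathcal{B}_x|\leq\kappa^{\aleph_0}$ and observing that $U_x$ is open in $X$, every member of $\mathcal{B}_x$ is open in $X$.

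The third step is to globalise, and this is the step I expect to be the main obstacle, since it is where the network weight must control the combination of the local data. For each $x$ choose $N_x\in\mathcal{N}$ with $x\in N_x\subseteq U_x$; the set $\mathcal{N}'=\{N_x:x\in X\}$ has cardinality at most $\kappa$, and selecting one witness $x(N)$ for each $N\in\mathcal{N}'$ produces a subfamily $\{U_{x(N)}:N\in\mathcal{N}'\}$ of size at most $\kappa$ that still covers $X$, since any $y$ satisfies $y\in N_y\subseteq U_{x(N_y)}$. I would then verify that $\mathcal{B}=\bigcup_{N\in\mathcal{N}'}\mathcal{B}_{x(N)}$ is a basis for $X$: given an open $W\subseteq X$ and $y\in W$, pick $N\in\mathcal{N}'$ with $y\in U_{x(N)}$, so that $W\cap U_{x(N)}$ is open in $U_{x(N)}$ and some $B\in\mathcal{B}_{x(N)}$ satisfies $y\in B\subseteq W\cap U_{x(N)}\subseteq W$. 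Finally $|\mathcal{B}|\leq\kappa\cdot\kappa^{\aleph_0}=\kappa^{\aleph_0}$, giving $w(X)\leq nw(X)^{\aleph_0}$ and completing the argument.
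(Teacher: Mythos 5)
Your proof is correct, and it rests on the same two pillars as the paper's: Theorem~\ref{T71}(3) to get neighbourhoods with Menger closure, and Lemma~\ref{MC2} (applied to each such closure, which is a Menger, hence Lindel\"{o}f, Hausdorff $P$-subspace) to get the local weight bound $\kappa^{\aleph_0}$. Where you genuinely diverge is the globalization step. The paper, having chosen network members $A_x$ with $x\in A_x\subseteq V_x$ and $\overline{A}_x$ Menger, goes on to manufacture for each such member $A_\alpha$ a larger open set $V_\alpha\supseteq\overline{A}_\alpha$ with $\overline{V}_\alpha$ Menger: it covers the Lindel\"{o}f set $\overline{A}_\alpha$ by countably many Menger-closure neighbourhoods, takes the union, and uses the $P$-space property together with preservation of the Menger property under countable unions to see that the closure of this union is still Menger; the local bases of these $V_\alpha$ are then glued. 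You bypass that entire construction by selecting one witness point $x(N)$ for each network member $N\in\mathcal{N}'$ and using the single neighbourhood $U_{x(N)}$ directly; your covering argument ($y\in N_y\subseteq U_{x(N_y)}$), the verification that $\bigcup_{N\in\mathcal{N}'}\mathcal{B}_{x(N)}$ is a base of sets open in $X$, and the count $\kappa\cdot\kappa^{\aleph_0}=\kappa^{\aleph_0}$ are all sound. The net effect is that your argument is shorter and shows the paper's countable-union step is dispensable for this theorem; the paper's construction buys the stronger (but here unused) fact that every network member with Menger closure is contained in an open set whose closure is Menger, which is the kind of statement that also drives the proof of Theorem~\ref{MT5}.
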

\begin{proof}
  Let $nw(X)=\kappa$ and $\mathcal{N}$ be a network for $X$ with $|\mathcal{N}|=\kappa$. Since $X$ is locally Menger, for each $x\in X$ there exist an open subset $U_x$ of $X$ and a Menger subspace $Y_x$ of $X$ such that $x\in U_x\subseteq Y_x$. By Theorem~\ref{T71}, for each $x\in X$ we can choose an open set $V_x$ in $X$ such that $x\in V_x\subseteq\overline{V}_x\subseteq U_x$ and $\overline{V}_x$ is Menger. Again for each $x\in X$ we can find $A_x\in\mathcal{N}$ such that $x\in A_x\subseteq V_x$ and hence each $\overline{A}_x$ is Menger. Thus the family $\mathcal{N}=\{A_\alpha\}_{\alpha\in\Lambda}$ has a subcollection consisting of members whose closures are Menger forms a cover of $X$. Since $X$ is regular and locally Menger, for each $y\in\overline{A}_\alpha$ similarly we can find an open set $W_y$ in $X$ containing $y$ such that $\overline{W}_y$ is Menger. Now $\{W_y : y\in\overline{A}_\alpha\}$ is a cover of $\overline{A}_\alpha$ by open sets in $X$ and hence there is a countable subfamily $\{W_{y_n} : n\in\mathbb{N}\}$ such that $\overline{A}_\alpha\subseteq\cup_{n\in\mathbb{N}}W_{y_n}$. Choose $V_\alpha=\cup_{n\in\mathbb{N}}W_{y_n}$. Clearly $\overline{V}_\alpha=\cup_{n\in\mathbb{N}}\overline{W}_{y_n}$  and hence $\overline{V}_\alpha$ is Menger. It follows that for each $\alpha\in\Lambda$ there exists an open set $V_\alpha$ in $X$ such that $\overline{A}_\alpha\subseteq V_\alpha$ and $\overline{V}_\alpha$ is Menger. Since $nw(\overline{V}_\alpha)\leq nw(X)=\kappa$, by Lemma~\ref{MC2} we have $w(\overline{V}_\alpha)\leq\kappa^{\aleph_0}$ and hence $w(V_\alpha)\leq\kappa^{\aleph_0}$ i.e. there exists a base $\mathcal{B}_\alpha$ for the subspace $V_\alpha$ of $X$ with $|\mathcal{B}_\alpha|\leq\kappa^{\aleph_0}$. Now it is easy to observe that $\cup_{\alpha\in\Lambda}\mathcal{B}_{\alpha}$ is a base for  $X$ with cardinality less than or equal to $\kappa^{\aleph_0}$. Clearly $w(X)\leq\kappa^{\aleph_0}$ i.e. $w(X)\leq nw(X)^{\aleph_0}$.
\end{proof}

\begin{Cor}
\label{MC3}
If $X$ is a locally Menger Hausdorff $P$-space, then $w(X)\leq |X|^{\aleph_0}$.
\end{Cor}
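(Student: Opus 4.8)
The plan is to obtain this corollary as an immediate consequence of Theorem~\ref{MT6}, which already establishes the inequality $w(X)\leq nw(X)^{\aleph_0}$ for every locally Menger Hausdorff $P$-space $X$. The only additional ingredient needed is the elementary bound $nw(X)\leq |X|$, which is recorded in the Preliminaries as a general fact valid for any topological space (one obtains a network of cardinality at most $|X|$ by taking, for instance, the family of all singletons, or more economically the family $\{A : A\subseteq X\}$ is far too large but the singleton network already gives $nw(X)\leq|X|$).

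Given these two facts, I would proceed as follows. Since $X$ is a locally Menger Hausdorff $P$-space, Theorem~\ref{MT6} yields $w(X)\leq nw(X)^{\aleph_0}$. Applying the inequality $nw(X)\leq|X|$ together with the monotonicity of cardinal exponentiation in the base (if $\kappa\leq\lambda$ then $\kappa^{\aleph_0}\leq\lambda^{\aleph_0}$), I get $nw(X)^{\aleph_0}\leq|X|^{\aleph_0}$. Chaining the two inequalities gives $w(X)\leq|X|^{\aleph_0}$, which is precisely the claim.

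There is no genuine obstacle here: the substantive work has been carried out in Theorem~\ref{MT6} (and, underlying it, in Lemma~\ref{MC2} and Theorem~\ref{T71}), and the corollary is a one-line deduction combining that theorem with a standard cardinal-function inequality. The only point to keep in mind, and which I would state explicitly for cleanliness, is that cardinal exponentiation is monotone in the base, so that the substitution of $|X|$ for the potentially smaller quantity $nw(X)$ is legitimate. Accordingly the proof will consist of citing Theorem~\ref{MT6}, invoking $nw(X)\leq|X|$, and concluding by this monotonicity.
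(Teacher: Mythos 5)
Your proposal is correct and matches the paper's intended argument exactly: the corollary is stated without proof precisely because it follows from Theorem~\ref{MT6} combined with the inequality $nw(X)\leq|X|$ (recorded in the Preliminaries, and justified as you note by the singleton network) and monotonicity of cardinal exponentiation in the base. Nothing further is needed.
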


\section{Preservation like properties}
\subsection{Preservation properties}
We now exhibit some preservation properties in locally Menger spaces. It is to be noted that the locally Menger property is not hereditary, not even in a Tychonoff space. The space $\mathbb{P}$ of irrationals is not locally Menger while the real line $\mathbb{R}$ is a Tychonoff locally compact space. Observe that the locally Menger property is preserved under $F_\sigma$-subspaces and also in a regular space, this property is preserved under locally closed subspaces.

%

We now provide an example of a non regular space in which the locally Menger property is not preserved under locally closed subspaces.
Let $X=\mathbb{N}^\mathbb{N}$ be the Baire space and $aX$ be the one point compactification of $X$. Clearly  $aX$ is locally Menger but not regular. It is also clear that $X$ is a locally closed subspace of $aX$ which is not locally Menger.
It is also worth mentioning that the Menger property is not preserved under locally closed subspaces, not even in regular spaces. The space $[0,\omega_1]$ is regular and also Menger, but its locally closed subspace $[0,\omega_1)$ is not Menger (as it is not Lindel\"{o}f).

Note that the Menger property is an invariant under continuous mappings \cite[Theorem 3.1]{coc2}, but the following example shows that the locally Menger property is not an invariant under continuous mappings.
Let $X=\mathbb{R}^2$ with the discrete topology and $Y=\mathbb{R}^2$ with the radial interval topology. Then $X$ is locally Menger but $Y$ is not so (see Example~\ref{P10}). By considering the identity mapping $i:X\to Y$, we can conclude that the locally Menger property is not an invariant under continuous mappings.

We now observe preservation of the locally Menger property under certain mappings for the next couple of results.
We first recall the following definitions (from \cite{Bi-quotient}).
A surjective continuous mapping $f:X\to Y$ is said to be weakly perfect if $f$ is closed and $f^{-1}(y)$ is Lindel\"{o}f for every $y\in Y$.
Also a surjective continuous mapping $f:X\to Y$ is said to be bi-quotient if whenever $y\in Y$ and $\mathcal{U}$ is a cover of $f^{-1}(y)$ by open sets in $X$, then finitely many $f(U)$ with $U\in\mathcal{U}$ cover some open set containing $y$ in $Y$. Clearly surjective continuous open (and also perfect) mappings are bi-quotient.

\begin{Th}
\label{T15}
The locally Menger property is an invariant under weakly perfect as well as bi-quotient mappings.
\end{Th}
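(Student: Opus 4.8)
The plan is to prove the two invariance statements separately, since weakly perfect and bi-quotient mappings call for slightly different arguments, and in each case the strategy is to take a point $y$ in the image, produce a Menger neighbourhood around the corresponding fiber, and push it forward to obtain a Menger neighbourhood of $y$.

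First I would treat the bi-quotient case, which I expect to be the cleaner of the two. Let $f:X\to Y$ be a surjective continuous bi-quotient mapping with $X$ locally Menger, and fix $y\in Y$. For each $x\in f^{-1}(y)$, local Mengerness of $X$ gives an open set $U_x$ and a Menger subspace $Y_x$ with $x\in U_x\subseteq Y_x$. The family $\{U_x : x\in f^{-1}(y)\}$ is then an open cover of the fiber $f^{-1}(y)$, so by the bi-quotient property finitely many images $f(U_{x_1}),\dotsc,f(U_{x_k})$ cover some open set $W$ containing $y$ in $Y$. I would then set $Z=\cup_{i=1}^{k} f(Y_{x_i})$. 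Each $f(Y_{x_i})$ is a continuous image of a Menger space, hence Menger, and a finite (indeed countable) union of Menger spaces is Menger; therefore $Z$ is Menger. Since $W\subseteq \cup_{i=1}^{k} f(U_{x_i})\subseteq \cup_{i=1}^{k} f(Y_{x_i})=Z$, we have $y\in W\subseteq Z$ with $W$ open and $Z$ Menger, so $Y$ is locally Menger at $y$.

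Next I would handle the weakly perfect case, where $f$ is closed, continuous, surjective, with each fiber $f^{-1}(y)$ Lindel\"of. Again fix $y\in Y$ and cover $f^{-1}(y)$ by sets $U_x$ with $x\in U_x\subseteq Y_x$, $Y_x$ Menger. Because $f^{-1}(y)$ is Lindel\"of, the cover $\{U_x\}$ admits a countable subcover $\{U_{x_n} : n\in\mathbb N\}$. Put $G=\cup_{n\in\mathbb N} U_{x_n}$, an open set containing $f^{-1}(y)$, and $Z=\cup_{n\in\mathbb N} f(Y_{x_n})$, which is Menger as a countable union of continuous images of Menger spaces. The decisive step is to convert the open set $G$ around the fiber into an open neighbourhood of $y$; this is exactly where closedness of $f$ enters. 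Since $f$ is closed and $f^{-1}(y)\subseteq G$, the set $W=Y\setminus f(X\setminus G)$ is open in $Y$, contains $y$, and satisfies $f^{-1}(W)\subseteq G$. Consequently $W\subseteq f(G)\subseteq\cup_{n\in\mathbb N} f(U_{x_n})\subseteq\cup_{n\in\mathbb N} f(Y_{x_n})=Z$, giving $y\in W\subseteq Z$ with $Z$ Menger, so $Y$ is locally Menger.

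The main obstacle in both arguments is the passage from a neighbourhood of the \emph{fiber} to a neighbourhood of the \emph{point}, and each hypothesis is tailored precisely to overcome it: the bi-quotient property directly supplies an open set about $y$ whose preimage is controlled by finitely many of the $U_x$, while in the weakly perfect case the Lindel\"of fibers reduce the cover to a countable one (so the union $Z$ stays Menger, using that Mengerness is preserved under countable unions) and the closedness of $f$ produces the saturated open neighbourhood $W=Y\setminus f(X\setminus G)$. A minor point worth verifying is that continuous images of Menger spaces are Menger and that countable unions of Menger spaces are Menger, both of which are recorded in the Preliminaries.
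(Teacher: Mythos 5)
Your proposal is correct and follows essentially the same route as the paper's own proof: in the bi-quotient case you use the defining property of bi-quotient maps to obtain an open set about $y$ covered by finitely many $f(U_{x_i})$ and then take the (finite, hence Menger) union of the images of the Menger sets, and in the weakly perfect case you use Lindel\"of fibers to extract a countable subcover and closedness of $f$ to form the saturated open neighbourhood $Y\setminus f(X\setminus G)$, exactly as in the paper. The only cosmetic difference is that you take the countable union of the images $f(Y_{x_n})$ while the paper takes the image of the union $f\bigl(\cup_n Z_{x_n}\bigr)$, which coincide.
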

\begin{proof}

Let $f:X\to Y$ be weakly perfect, where $X$ is locally Menger. Let $y\in Y$. For each $x\in f^{-1}(y)$, choose an open set $U_x$ and a Menger subspace $Z_x$ of $X$ such that $x\in U_x\subseteq Z_x$. Since $f^{-1}(y)$ is Lindel\"of, there is a countable subcollection $\{U_{x_n} : n\in\mathbb{N}\}$ of $\{U_x : x\in f^{-1}(y)\}$ that covers $f^{-1}(y)$. Consequently $f^{-1}(y)\subseteq
\cup_{n\in\mathbb{N}}Z_{x_n}$. Moreover $y\in Y\setminus f(X\setminus\cup_{n\in\mathbb{N}}U_{x_n})\subseteq f(\cup_{n\in\mathbb{N}}Z_{x_n})$. Since $f$ is closed, $Y\setminus f(X\setminus\cup_{n\in\mathbb{N}}U_{x_n})$ is an open set in $Y$ containing $y$. Also since $\cup_{n\in\mathbb{N}}Z_{x_n}$ is Menger, $f(\cup_{n\in\mathbb{N}}Z_{x_n})$ is a Menger subspace of $Y$.\\

Next suppose that $f:X\to Y$ is a bi-quotient mapping, where $X$ is locally Menger. For each $x\in X$, choose an open set $U_x$ and a Menger subspace $Z_x$ of $X$ such that $x\in U_x\subseteq Z_x$. Let $y\in Y$ and consider $f^{-1}(y)$.
Since $f$ is a bi-quotient mapping, there exist a finite subset $\{U_{x_i} : 1\leq i\leq k\}$ of $\{U_x : x\in X\}$ and an open set $V$ containing $y$ in $Y$ such that $V\subseteq\cup_{i=1}^k f(U_{x_i})$. Clearly $y\in\Int f(\cup_{i=1}^k U_{x_i})$ and $f(\cup_{i=1}^k Z_{x_i})$ is a Menger subspace of $Y$ with $\Int f(\cup_{i=1}^k U_{x_i})\subseteq f(\cup_{i=1}^k Z_{x_i})$. Thus $Y$ is locally Menger.
\end{proof}

\begin{Cor}
\label{T1501}
If $f:X\rightarrow Y$ is a perfect (or an open continuous) mapping from a locally Menger space $X$ onto a space $Y$, then $Y$ is locally Menger.
\end{Cor}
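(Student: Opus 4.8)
The plan is to derive this statement directly from Theorem~\ref{T15}, since both classes of mappings named in the corollary are already subsumed by the two mapping classes (weakly perfect and bi-quotient) for which invariance of the locally Menger property was just established. Accordingly, the entire strategy reduces to verifying that a perfect mapping and an open continuous surjection each fall under the hypotheses of Theorem~\ref{T15}; once this is checked, the conclusion is immediate.

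First I would handle the open continuous case. The paragraph preceding Theorem~\ref{T15} already records that every surjective continuous open mapping is bi-quotient. Hence, if $f:X\to Y$ is open, continuous, and surjective with $X$ locally Menger, then $f$ is bi-quotient, and Theorem~\ref{T15} yields that $Y$ is locally Menger. No further argument is needed for this case.

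Next I would treat the perfect case, and here there are two equally short routes. Recall that a perfect mapping is a closed continuous surjection all of whose fibers $f^{-1}(y)$ are compact. On the one hand, every compact space is Lindel\"of, so each fiber is Lindel\"of; a closed continuous surjection with Lindel\"of fibers is precisely a weakly perfect mapping, so $f$ is weakly perfect and Theorem~\ref{T15} applies. On the other hand, the remark preceding Theorem~\ref{T15} also notes that perfect mappings are bi-quotient, which gives an alternative application of the same theorem. Either observation suffices, and I would present the weakly-perfect route since it makes the role of compactness of the fibers most transparent.

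There is essentially no genuine obstacle in this corollary: the substantive content lies entirely in Theorem~\ref{T15}, and the work here consists only in recalling the definitions and the two ``clearly'' assertions already granted in the text. If anything, the one point that deserves an explicit (if one-line) mention is that compactness of the fibers of a perfect map forces them to be Lindel\"of, thereby placing perfect maps inside the weakly perfect class; everything else is a direct invocation of the preceding theorem.
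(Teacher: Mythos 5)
Your proposal is correct and matches the paper's intended derivation: the corollary is meant to follow immediately from Theorem~\ref{T15} together with the remark preceding it that surjective continuous open (and also perfect) mappings are bi-quotient. Your routing of the perfect case through the weakly perfect class (compact fibers are Lindel\"of) is an equally valid, essentially equivalent application of the same theorem, and you correctly note the bi-quotient alternative as well.
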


\begin{Prop}
\label{T21-1}
The locally Menger property is an inverse invariant under injective closed continuous mappings.
\end{Prop}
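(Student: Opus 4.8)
The plan is to observe that an injective closed continuous map is automatically a closed embedding, so that $X$ is homeomorphic to a closed subspace of $Y$, and then to verify directly that a closed subspace of a locally Menger space is again locally Menger. Concretely, let $f:X\to Y$ be injective, closed and continuous with $Y$ locally Menger. First I would argue that $f:X\to f(X)$ is a homeomorphism: it is a continuous bijection onto $f(X)$, and for any closed $C\subseteq X$ the set $f(C)$ is closed in $Y$ (since $f$ is closed), hence $f(C)=f(C)\cap f(X)$ is closed in $f(X)$; thus $f$ carries closed sets of $X$ to closed sets of $f(X)$, so its inverse is continuous. Taking $C=X$ shows moreover that $f(X)$ is closed in $Y$. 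Therefore $X$ is homeomorphic to the closed subspace $F:=f(X)$ of $Y$, and the claim reduces to showing that $F$ is locally Menger.

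For this step I would fix $x\in F$ and use the local Mengerness of $Y$ to pick an open set $U$ in $Y$ and a Menger subspace $Z$ of $Y$ with $x\in U\subseteq Z$. Then $U\cap F$ is open in $F$ and contains $x$, while $Z\cap F$ is a subspace of $F$ with $x\in U\cap F\subseteq Z\cap F$. Since $F$ is closed in $Y$, the set $Z\cap F$ is closed in $Z$; as the Menger property is hereditary for closed (indeed $F_\sigma$) subsets, $Z\cap F$ is Menger. Hence $U\cap F$ and $Z\cap F$ witness the locally Menger property of $F$ at $x$, and so $F$, and therefore $X$, is locally Menger.

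The only point requiring care is the closed-embedding reduction; once that is in place the argument needs no separation axioms. This is worth emphasising, since preservation of the locally Menger property under locally closed subspaces required regularity, whereas for closed subspaces the intersection $Z\cap F$ stays closed in $Z$ and the closed-hereditariness of the Menger property does all the work. I do not anticipate any genuine obstacle: the proof is essentially the combination of these two elementary observations.
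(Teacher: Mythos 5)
Your proof is correct, and it is essentially the argument the paper intends: the proposition is stated there without proof, immediately after the observation that the locally Menger property is preserved under $F_\sigma$- (in particular closed) subspaces, and your reduction of an injective closed continuous map to a closed embedding onto $f(X)$ supplies exactly the remaining step. Both halves of your argument---that $f(X)$ is closed in $Y$ with $f:X\to f(X)$ a homeomorphism, and that $Z\cap F$ is Menger because it is closed in the Menger space $Z$---are sound and, as you note, need no separation axioms.
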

Note that the Menger (respectively, locally Menger) property is not an inverse invariant under open as well as continuous mappings.
 Take $X=X_1\times X_2$, where $X_1=[0,\omega_1)$, $X_2=[0,\omega_1]$. Consider the projection $p_2:X\to X_2$. Clearly $p_2$ is a continuous open mapping and $X_2$ is Menger whereas $X$ is not Menger as $X_1$ is not.
Also consider $X=X_1\times X_2$, where $X_1=\Psi(\mathcal{A})$ the Isbell-Mr\'{o}wka space and $X_2=\mathbb{R}^2$ with radial interval topology. Let $p_1:X\to X_1$ be the projection onto first coordinate. Clearly $p_1$ is both continuous and open and $X_1$ is locally Menger (see Example~\ref{E1}). Now $X$ is not locally Menger as $X_2$ is not (see Example~\ref{P10}).

Let $\{X_\alpha : \alpha\in\Lambda\}$ be a family of topological spaces. Let $X=\oplus_{\alpha\in\Lambda}X_\alpha$ be the topological sum which is defined as $\oplus_{\alpha\in\Lambda}X_\alpha=\cup_{\alpha\in\Lambda}\{(x,\alpha) : x\in X_\alpha\}$. For each $\alpha\in\Lambda$, let $\varphi_\alpha:X_\alpha\to X$ be defined by $\varphi_\alpha(x)=(x,\alpha)$. The topology on $X$ is defined as follows. A subset $U$ of $X$ is open in $X$ if and only if $\varphi_\alpha^{-1}(U)$ is open in $X_\alpha$ for each $\alpha\in\Lambda$. Also if for each $\alpha\in\Lambda$ the space $X_\alpha$ is homeomorphic to a fixed space $Y$, then the topological sum $\oplus_{\alpha\in\Lambda}X_\alpha$ is homeomorphic to $Y\times\Lambda$, where $\Lambda$ has the discrete topology.

Clearly if $X=\cup_{\alpha\in\Lambda}X_\alpha$, where each $X_\alpha$ is an open locally Menger subspace of $X$, then $X$ is locally Menger and also the topological sum $\oplus_{\alpha\in\Lambda}X_\alpha$ is locally Menger if and only if each $X_\alpha$ is locally Menger. We cannot replace `locally Menger' by `Menger' in these assertions. Take $X=[0,\omega_1)$. Then $\{[0,\alpha) : \alpha<\omega_1\}$ is an open cover of $X$. For each $\alpha<\omega_1$, $[0,\alpha)$ is Menger but $X$ is not. Also if
 $Y$ is the topological sum of $\omega_1$ copies of $[0,1]$, then $Y$ is homeomorphic to $[0,1]\times D$, where $D$ is a discrete space with $|D|=\omega_1$. Verify that $[0,1]\times D$ is not Menger. Thus $Y$ is the topological sum of Menger spaces which is itself not Menger.

Since there exist two Menger sets of reals $X$ and $Y$ such that the product $X\times Y$ is not Menger (see \cite[Theorem 2.10]{PMS}), it follows that product of two locally Menger spaces need not be locally Menger.

Recall that the product of a Menger space and a $\sigma$-compact space is again Menger \cite[Proposition 4.7]{CST}. If we replace $\sigma$-compact  by locally $\sigma$-compact, then the product space need not be Menger. For example, $X_1=[0,\omega_1)$ is locally $\sigma$-compact and $X_2=[0,\omega_1]$ is Menger but $X_1\times X_2$ is not Menger. However if $X$ is locally Menger and $Y$ is locally $\sigma$-compact, then $X\times Y$ is locally Menger.
If the Cartesian product $\prod_{\alpha\in\Lambda} X_\alpha$ is locally Menger, then each $X_\alpha$ is locally Menger. We also obtain the following.
\begin{Prop}
\label{PP3}
If the Cartesian product $\prod_{\alpha\in\Lambda} X_\alpha$ is locally Menger, then $X_\alpha$ is Menger for all but finitely many $\alpha$. 
\end{Prop}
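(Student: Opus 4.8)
The plan is to exploit the single most important structural feature of the Tychonoff product topology: every basic open set constrains only finitely many coordinates. This is precisely what will produce the ``all but finitely many'' conclusion, so the strategy is to fix one point, extract from local Mengerness a Menger subspace containing a whole basic neighbourhood, and then project.

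Concretely, I would first assume the product $X=\prod_{\alpha\in\Lambda}X_\alpha$ is nonempty (otherwise every factor is nonempty vacuously fails and the statement is trivial, since an empty product forces some empty factor, which is Menger). Fix any point $x=(x_\alpha)_{\alpha\in\Lambda}\in X$. Since $X$ is locally Menger, there exist an open set $U$ and a Menger subspace $Y$ of $X$ with $x\in U\subseteq Y$. By definition of the product topology I can choose a basic open set $B=\bigcap_{\beta\in F}\pi_\beta^{-1}(V_\beta)$, where $F\subseteq\Lambda$ is finite, each $V_\beta$ is open in $X_\beta$ with $x_\beta\in V_\beta$, and $x\in B\subseteq U$. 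The finite set $F$ is exactly the set of ``possibly bad'' indices.

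The second step is the projection argument. For each $\alpha\in\Lambda\setminus F$ the set $B$ imposes no restriction on the $\alpha$th coordinate, so $\pi_\alpha(B)=X_\alpha$ (given any $z\in X_\alpha$, one builds a point of $B$ with $\alpha$th coordinate $z$, using that each $V_\beta$ is nonempty). Hence $X_\alpha=\pi_\alpha(B)\subseteq\pi_\alpha(U)\subseteq\pi_\alpha(Y)\subseteq X_\alpha$, forcing $\pi_\alpha(Y)=X_\alpha$. Because the projection $\pi_\alpha$ is continuous and the Menger property is an invariant under continuous mappings, $X_\alpha=\pi_\alpha(Y)$ is Menger for every $\alpha\in\Lambda\setminus F$. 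Since $F$ is finite, this says exactly that $X_\alpha$ is Menger for all but finitely many $\alpha$.

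I do not expect a serious obstacle here: once the basic neighbourhood $B$ is in hand, the rest is routine use of openness of $B$ in the unrestricted coordinates plus preservation of Mengerness under continuous images. The only point demanding care is the very first move, namely recording that local Mengerness yields a Menger subspace containing an \emph{entire} basic open set (not merely the point $x$), so that the projections $\pi_\alpha$, $\alpha\notin F$, genuinely surject onto the full factors $X_\alpha$; this is what upgrades ``each factor is locally Menger'' to ``cofinitely many factors are Menger.''
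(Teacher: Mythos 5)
Your proof is correct and is essentially identical to the paper's own argument: fix a point, use local Mengerness to get a Menger subspace $Y$ containing an open neighbourhood, shrink to a basic open set constraining only finitely many coordinates, and then note that for every other index the projection maps $Y$ onto the whole factor, which is therefore Menger as a continuous image of a Menger space. The only (harmless) additions are your explicit treatment of the empty-product case and the explicit verification that $\pi_\alpha(B)=X_\alpha$, which the paper leaves implicit.
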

\begin{proof}
Let $x\in\prod_{\alpha\in\Lambda} X_\alpha$. Choose an open set $U$ and a Menger subspace $Y$ of $\prod_{\alpha\in\Lambda} X_\alpha$ such that $x\in U\subseteq Y$. We can find a basic member $\prod_{\alpha\in\Lambda} V_\alpha$ of $\prod_{\alpha\in\Lambda} X_\alpha$ such that for some finite set $\Lambda_0\subseteq\Lambda$,  $V_\alpha=X_\alpha$ for each $\alpha\in\Lambda\setminus\Lambda_0$ and $\prod_{\alpha\in\Lambda} V_\alpha\subseteq U$. If for each $\alpha\in\Lambda$, $p_\alpha:\prod_{\alpha\in\Lambda} X_\alpha\to X_\alpha$ is the projection mapping, then for each $\alpha\in\Lambda\setminus\Lambda_0$, $P_\alpha(Y)=X_\alpha$ is Menger.
\end{proof}

\begin{Th}
\label{TP1} If $X=\cup_{\alpha\in\Lambda}X_\alpha$, where each $X_\alpha$ is a closed locally Menger subspace of $X$ and the collection $\{X_\alpha : \alpha\in\Lambda\}$ is locally finite in $X$, then $X$ is locally Menger.
\end{Th}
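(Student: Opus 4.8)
The plan is to verify the locally Menger property directly at an arbitrary point $x\in X$, producing an open neighbourhood of $x$ that sits inside a finite union of Menger subspaces. First I would invoke local finiteness: since $\{X_\alpha:\alpha\in\Lambda\}$ is locally finite, there is an open set $W$ with $x\in W$ meeting only finitely many members, say $X_{\alpha_1},\dots,X_{\alpha_k}$. Because the $X_\alpha$ cover $X$, the point $x$ lies in at least one of them, and every $X_\alpha$ containing $x$ must occur among $X_{\alpha_1},\dots,X_{\alpha_k}$ (as then $x\in W\cap X_\alpha$). Let $I=\{i:x\in X_{\alpha_i}\}$, a nonempty finite index set, and put $J=\{1,\dots,k\}\setminus I$.

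Next I would extract Menger data from each relevant piece. For each $i\in I$ the subspace $X_{\alpha_i}$ is locally Menger, so there are an open set $U_i$ of $X_{\alpha_i}$ and a Menger subspace $Y_i$ of $X_{\alpha_i}$ with $x\in U_i\subseteq Y_i$. Writing $U_i=O_i\cap X_{\alpha_i}$ for some open $O_i$ in $X$, and noting that each $Y_i$ is also Menger as a subspace of $X$ (the subspace topology is transitive), I now have the building blocks. To separate $x$ from the irrelevant pieces I use closedness: for each $j\in J$ we have $x\notin X_{\alpha_j}$ and $X_{\alpha_j}$ is closed, so $X\setminus X_{\alpha_j}$ is an open neighbourhood of $x$.

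I would then set
\[
  V = W\cap\bigcap_{i\in I}O_i\cap\bigcap_{j\in J}(X\setminus X_{\alpha_j}),
\]
a finite intersection of open neighbourhoods of $x$, hence an open neighbourhood of $x$ in $X$. The crux is to check $V\subseteq\bigcup_{i\in I}Y_i$. Given $z\in V\subseteq W$, the point $z$ lies in some $X_\alpha$, and since $z\in W\cap X_\alpha$ this forces $\alpha=\alpha_m$ for some $m\le k$; since $z\in V\subseteq X\setminus X_{\alpha_j}$ for all $j\in J$, necessarily $m\in I$; and then $z\in V\cap X_{\alpha_m}\subseteq O_m\cap X_{\alpha_m}=U_m\subseteq Y_m$. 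Thus $V\subseteq Y$, where $Y=\bigcup_{i\in I}Y_i$.

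Finally, since the Menger property is preserved under finite (indeed countable) unions, $Y$ is a Menger subspace of $X$ with $x\in V\subseteq Y$, establishing that $X$ is locally Menger. I expect the main obstacle to be precisely the containment $V\subseteq\bigcup_{i\in I}Y_i$: it is here that closedness of the $X_\alpha$ (to discard the pieces not containing $x$) and local finiteness (to guarantee that only finitely many pieces interfere, so that a finite union of Menger sets suffices) are simultaneously needed, and dropping either hypothesis breaks the argument.
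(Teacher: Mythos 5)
Your proof is correct, and it takes a genuinely different route from the paper's. You argue directly at a point $x$: local finiteness yields a neighbourhood $W$ meeting only $X_{\alpha_1},\dots,X_{\alpha_k}$; closedness of the finitely many pieces not containing $x$ lets you shrink $W$ to $V=W\cap\bigcap_{i\in I}O_i\cap\bigcap_{j\in J}(X\setminus X_{\alpha_j})$; and the covering argument forces $V\subseteq\bigcup_{i\in I}Y_i$, a finite union of Menger subspaces, which is Menger because the Menger property is countably (in particular finitely) additive. The paper instead realizes $X$ as the image of the topological sum $Y=\oplus_{\alpha\in\Lambda}X_\alpha$ (which is locally Menger) under the natural map $f(x,\alpha)=x$, checks that $f$ is perfect --- closedness holds because a locally finite union of closed sets is closed, and each fiber is finite by local finiteness --- and then invokes Corollary~\ref{T1501}, the invariance of the locally Menger property under perfect mappings. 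Your route is more elementary and self-contained: it uses only the definition and the union-stability of Mengerness, and it makes explicit exactly where closedness (to excise the pieces missing $x$) and local finiteness (to keep the union finite) enter. The paper's route is shorter given the machinery of Theorem~\ref{T15}, reuses the sum-plus-mapping template that recurs in Section 4 (Theorems~\ref{TT1} and~\ref{TL3}), and is what allows the immediate passage to the $P$-space analogue (Theorem~\ref{TP2}), where perfectness is relaxed to a closed map with Lindel\"{o}f fibers; your argument would also adapt there, but only after invoking the $P$-space property to keep the countable intersection defining $V$ open.
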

\begin{proof}
Let $Y=\oplus_{\alpha\in\Lambda}X_\alpha$. Then $Y$ is locally Menger. Define $f:Y\to X$ by $f(x,\alpha)=x$ and also for each $\alpha$ define $\varphi_\alpha:X_\alpha\to Y$  by $\varphi_\alpha(x)=(x,\alpha)$.
Now for each closed set $F$ in $Y$, $f(F)=\cup_{\alpha\in\Lambda}\varphi_\alpha^{-1}(F)$ is also closed in $X$. Again if any $x\in X$ is chosen, then by local finiteness of the given collection, there is an open set $V$ containing $x$ which intersects only with a finite subcollection, say $\{X_{\alpha_i} : 1\leq i\leq k\}$. Also we can write $f^{-1}(x)=\oplus_{\{\alpha_i : 1\leq i\leq k\}}\{x\}$. Clearly $f$ is a perfect mapping and the conclusion now follows from Corollary~\ref{T1501}.
\end{proof}

A similar assertion also holds in the context of $P$-spaces. Since the union of the members in a locally countable collection of closed sets in a $P$-space is again a closed set, we have the following.
\begin{Th}
\label{TP2}
If $X=\cup_{\alpha\in\Lambda}X_\alpha$ is a $P$-space, where each $X_\alpha$ is a closed locally Menger subspace of $X$ and the collection $\{X_\alpha : \alpha\in\Lambda\}$ is locally countable in $X$, then $X$ is locally Menger.
\end{Th}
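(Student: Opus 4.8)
The plan is to follow the same strategy as in Theorem~\ref{TP1}, replacing the perfect mapping used there by a weakly perfect one and invoking Theorem~\ref{T15} in place of Corollary~\ref{T1501}. First I would set $Y=\oplus_{\alpha\in\Lambda}X_\alpha$, which is locally Menger because each summand $X_\alpha$ is, and define the folding map $f:Y\to X$ by $f(x,\alpha)=x$, together with the canonical injections $\varphi_\alpha:X_\alpha\to Y$ given by $\varphi_\alpha(x)=(x,\alpha)$. The map $f$ is clearly a continuous surjection, so it remains to check that $f$ is closed and that every fibre is Lindel\"{o}f; these two facts make $f$ weakly perfect and let Theorem~\ref{T15} deliver the conclusion that $X=f(Y)$ is locally Menger.

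For closedness, I would take a closed set $F\subseteq Y$ and write $f(F)=\cup_{\alpha\in\Lambda}\varphi_\alpha^{-1}(F)$. Each $\varphi_\alpha^{-1}(F)$ is closed in $X_\alpha$, hence closed in $X$ since $X_\alpha$ is closed in $X$, and the family $\{\varphi_\alpha^{-1}(F):\alpha\in\Lambda\}$ inherits local countability from $\{X_\alpha:\alpha\in\Lambda\}$ because $\varphi_\alpha^{-1}(F)\subseteq X_\alpha$. The observation recorded just before the statement --- that in a $P$-space the union of a locally countable family of closed sets is again closed --- then shows $f(F)$ is closed. For the fibres, I would fix $x\in X$; local countability provides an open neighbourhood of $x$ meeting only countably many $X_\alpha$, so $x$ lies in only countably many of the $X_\alpha$, and therefore $f^{-1}(x)=\{(x,\alpha):x\in X_\alpha\}$ is a countable discrete subspace of $Y$, which is Lindel\"{o}f.

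The conceptual point, and the step I expect to require the most care, is recognising that one cannot simply reuse the perfectness argument of Theorem~\ref{TP1}: a countable discrete fibre is in general not compact, so $f$ need not be perfect here. The correct weakening is weak perfectness, whose fibre condition (Lindel\"{o}f rather than compact) is exactly what countability of the fibres supplies, and whose preservation of the locally Menger property is guaranteed by Theorem~\ref{T15}. The only place where the $P$-space hypothesis is genuinely used is in the closedness of $f$, via the locally countable union result; everything else is a routine transcription of the locally finite case.
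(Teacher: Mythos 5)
Your proof is correct and is precisely the argument the paper intends: you adapt the proof of Theorem~\ref{TP1} by taking the folding map $f:\oplus_{\alpha\in\Lambda}X_\alpha\to X$, using the fact (noted just before the statement) that a locally countable union of closed sets in a $P$-space is closed to get closedness of $f$, and observing that the countable discrete fibres are Lindel\"{o}f, so that $f$ is weakly perfect and Theorem~\ref{T15} applies in place of Corollary~\ref{T1501}. Your remark that the fibres need no longer be compact, forcing the passage from perfect to weakly perfect mappings, is exactly the point of the paper's (unwritten) modification of the locally finite case.
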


\subsection{MG-spaces and some related observations}
We start with a basic characterization of open (respectively, closed) sets in locally Menger spaces.
\begin{Prop}
\label{T25}
    Let $X$ be locally Menger. A subset $Y$ is open (respectively, closed) in $X$ if and only if for each Menger subspace $M$ of $X$, $Y\cap M$ is open (respectively, closed) in $M$.
 \end{Prop}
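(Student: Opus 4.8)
The plan is to first dispose of the forward implication and then to reduce the converse for closed sets to the converse for open sets, the latter being handled by a direct localization argument. Throughout, the only substantive use of the hypothesis that $X$ is locally Menger will occur in the open case.

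First I would observe that the \emph{only if} direction is immediate: if $Y$ is open (respectively, closed) in $X$, then by the very definition of the subspace topology $Y\cap M$ is open (respectively, closed) in $M$ for every subspace $M$ of $X$, in particular for every Menger subspace. Local Mengerness plays no role here.

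For the converse in the open case, I would suppose that $Y\cap M$ is open in $M$ for each Menger subspace $M$ of $X$, and fix an arbitrary $y\in Y$. Since $X$ is locally Menger, I can choose an open set $U$ and a Menger subspace $Z$ of $X$ with $y\in U\subseteq Z$. By hypothesis $Y\cap Z$ is open in $Z$, so there is an open set $W$ in $X$ with $Y\cap Z=W\cap Z$; as $y\in Y\cap Z$ this gives $y\in W$, and hence $y\in U\cap W$, which is open in $X$. The decisive point is that this neighbourhood already lies inside $Y$: from $U\subseteq Z$ we get $U\cap W\subseteq Z\cap W=Y\cap Z\subseteq Y$. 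Since $y$ was arbitrary, $Y$ is open. The closed case then follows by complementation: if $Y\cap M$ is closed in $M$ for every Menger subspace $M$, then $(X\setminus Y)\cap M=M\setminus(Y\cap M)$ is open in $M$ for every such $M$, so applying the already-established open case to $X\setminus Y$ shows that $X\setminus Y$ is open, i.e. $Y$ is closed.

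I expect the main (and essentially the only) obstacle to be the containment step $U\cap W\subseteq Y$ in the open case, since this is exactly where the locally Menger property is genuinely needed. Without a Menger neighbourhood $Z$ enclosing the open set $U$, the hypothesis about Menger subspaces would yield no local control over $Y$ near $y$, and the argument would break down; the remainder is routine manipulation of the subspace topology.
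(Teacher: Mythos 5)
Your proof is correct, including the containment step $U\cap W\subseteq Z\cap W=Y\cap Z\subseteq Y$ and the reduction of the closed case to the open case by complementation. The paper states this proposition without an explicit proof, but your localization argument is exactly the one the paper uses in the converse direction of Theorem~\ref{TT1} (choose $y\in U\subseteq Z$ with $Z$ Menger, extract an open $W$ with $Y\cap Z=W\cap Z$, and conclude via $U\cap W$), specialized to the identity map, so this is essentially the same approach.
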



The above result instigates the following definition.
\begin{Def}
A space $X$ is said to be Menger generated (in short, MG-space) if it satisfies the following condition.\\
   A subset $U$ is open in $X$ provided that  $U\cap M$ is open in $M$ for every Menger subspace $M$ of $X$.
\end{Def}
An equivalent condition is that for each $F\subseteq X$, the set $F$ is closed in $X$ provided that $F\cap M$ is closed in $M$ for any Menger subspace $M$ of $X$.
By Proposition~\ref{T25}, every locally Menger space is a MG-space.

Recall that a space $X$ is said to be a sequential if a subset of $X$ is closed if and only if together with any sequence it contains all its limits.

\begin{Ex}
\label{PP4}
A sequential space $X$ is a MG-space. The reason is as follows.
Let $F$ be subset of $X$ such that $F\cap M$ is closed in $M$ for each Menger subspace $M$ of $X$. If $F$ is not closed in $X$, then we can find a sequence $(x_n)$ of points of $F$ such that there exists a limit $x_0$ of $(x_n)$ with $x_0\notin F$. Clearly $Y=\{x_0,x_1,x_2,\ldots\}$ is a Menger subspace of $X$ but $F\cap Y$ is not closed in $Y$, a contradiction. Therefore $X$ is a MG-space.
\end{Ex}

We now make certain investigations on MG-spaces. The next two results will be used subsequently.
\begin{Lemma}[{Folklore}]
\label{LL1}
If $X$ is  Lindel\"{o}f, then for any $P$-space $Y$ the projection $p:X\times Y\to Y$ is closed.
\end{Lemma}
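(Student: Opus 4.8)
The plan is to adapt the classical tube-lemma argument showing that the projection along a compact factor is closed, replacing compactness of $X$ by Lindel\"{o}fness and replacing the resulting finite intersection of neighbourhoods by a countable one, whose openness is guaranteed precisely by the $P$-space hypothesis on $Y$. So the scheme is exactly Kuratowski's theorem with ``finite'' upgraded to ``countable''.

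First I would fix an arbitrary closed set $F\subseteq X\times Y$ and show that $Y\setminus p(F)$ is open. Let $y_0\in Y\setminus p(F)$; then the whole fibre $X\times\{y_0\}$ misses $F$. For each $x\in X$ the point $(x,y_0)$ lies in the open set $(X\times Y)\setminus F$, so by the definition of the product topology I can select basic open sets $U_x\subseteq X$ and $V_x\subseteq Y$ with $x\in U_x$, $y_0\in V_x$, and $(U_x\times V_x)\cap F=\emptyset$.

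Next, $\{U_x:x\in X\}$ is an open cover of $X$, so since $X$ is Lindel\"{o}f there is a countable subcover $\{U_{x_n}:n\in\mathbb{N}\}$. I would then set $V=\bigcap_{n\in\mathbb{N}}V_{x_n}$. Here the $P$-space assumption enters: a countable intersection of open sets in $Y$ is open, so $V$ is an open neighbourhood of $y_0$. A routine verification then shows $(X\times V)\cap F=\emptyset$, since any $(x,y)\in X\times V$ has $x\in U_{x_n}$ for some $n$ and $y\in V\subseteq V_{x_n}$, hence $(x,y)\in U_{x_n}\times V_{x_n}$, which is disjoint from $F$. Consequently $V\cap p(F)=\emptyset$, that is, $V$ is a neighbourhood of $y_0$ contained in $Y\setminus p(F)$. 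As $y_0$ was arbitrary, $Y\setminus p(F)$ is open and $p(F)$ is closed.

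I do not expect any serious obstacle, as the argument is essentially routine; the one point deserving care is the passage from the finite intersection available in the compact case to the countable intersection $V$, and this is exactly where the hypothesis that $Y$ is a $P$-space is indispensable, for without it $V$ need not be open. It is also worth noting that the argument uses neither any separation axiom nor any hypothesis on $X$ beyond Lindel\"{o}fness, and that $X$ itself need not be a $P$-space.
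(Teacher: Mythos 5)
Your proof is correct: the tube-lemma argument with Lindel\"{o}fness supplying a countable subcover and the $P$-space property of $Y$ guaranteeing that the countable intersection $V=\bigcap_{n\in\mathbb{N}}V_{x_n}$ is open is exactly the standard argument for this statement. The paper itself offers no proof (the lemma is tagged as folklore), and your write-up supplies precisely the routine verification that tag alludes to, including the correct observation that no separation axioms are needed.
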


\begin{Lemma}[{cf.\cite{Engelking}}]
\label{LL2}
Let $f:X\to Y$ be a mapping.
\begin{enumerate}[wide,label={\upshape(\arabic*)},
ref={\theLemma (\arabic*)},
leftmargin=*]
  \item\label{LL201} If $f$ is  closed (respectively, open), then for any subspace $Z$ of $Y$ the restriction $f_Z:f^{-1}(Z)\to Z$ is also closed (respectively, open).
  \item\label{LL202} If $f$ is a quotient mapping, then for any subset $B$ of $Y$ which is either closed or open, the restriction $f_B:f^{-1}(B)\to B$ is also a quotient mapping.
  \end{enumerate}
\end{Lemma}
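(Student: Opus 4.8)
The plan is to prove the two parts separately, in each case transferring the behaviour of the restriction back to that of $f$ itself via the elementary identity $f_Z^{-1}(S)=f^{-1}(S)$ for every $S\subseteq Z$ (and likewise $f_B^{-1}(S)=f^{-1}(S)$ for $S\subseteq B$), which holds simply because the domain of the restriction is exactly $f^{-1}(Z)$ (respectively $f^{-1}(B)$). The whole argument is set-theoretic bookkeeping, so the work is in choosing, for each hypothesis on $f$, the right closed or open set in $X$ to push forward.

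For part (1) I would first treat the closed case. Let $A$ be closed in the subspace $f^{-1}(Z)$, and let $\overline{A}^X$ denote its closure in $X$; then $A=\overline{A}^X\cap f^{-1}(Z)$, since $A$ is the trace on $f^{-1}(Z)$ of some closed subset of $X$ containing it. As $f$ is closed, $f(\overline{A}^X)$ is closed in $Y$, and I would verify the key equality $f_Z(A)=f(\overline{A}^X)\cap Z$. The inclusion $\subseteq$ is immediate; for $\supseteq$, any $y\in f(\overline{A}^X)\cap Z$ is $f(x)$ with $x\in\overline{A}^X$ and $x\in f^{-1}(Z)$, hence $x\in A$ and $y\in f_Z(A)$. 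This exhibits $f_Z(A)$ as closed in $Z$. The open case is dual: if $A$ is open in $f^{-1}(Z)$, write $A=V\cap f^{-1}(Z)$ with $V$ open in $X$, use that $f(V)$ is open in $Y$, and check $f_Z(A)=f(V)\cap Z$ by the same two-way inclusion.

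For part (2), continuity and surjectivity of $f_B$ are clear, so only the quotient condition must be established, and here the case split between open and closed $B$ is the crux. When $B$ is open in $Y$, the set $f^{-1}(B)$ is open in $X$; so if $f_B^{-1}(U)$ is open in $f^{-1}(B)$ it is open in $X$, and since $f_B^{-1}(U)=f^{-1}(U)$, the quotient property of $f$ forces $U$ open in $Y$, hence open in $B$. When $B$ is closed I would instead invoke the equivalent closed-set formulation of ``quotient map'': now $f^{-1}(B)$ is closed in $X$, so if $f_B^{-1}(F)=f^{-1}(F)$ is closed in $f^{-1}(B)$ it is closed in $X$, and the closed-set quotient property of $f$ yields $F$ closed in $Y$, hence closed in $B$.

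The main obstacle, such as it is, is precisely this last point: one cannot run the quotient argument with a single characterization. The step ``$f_B^{-1}(U)$ open in $f^{-1}(B)$ $\Rightarrow$ open in $X$'' requires $f^{-1}(B)$ to be open in $X$, which holds only when $B$ is open; the symmetric closed statement requires $B$ closed. This is exactly why the hypothesis on $B$ is ``open or closed,'' and keeping track of which formulation of the quotient property is legitimate in each case is the only genuine care the proof demands.
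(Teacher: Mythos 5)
Your proposal is correct: the identity $f_Z^{-1}(S)=f^{-1}(S)$ for $S\subseteq Z$, the trace representation $A=\overline{A}^X\cap f^{-1}(Z)$ (resp.\ $A=V\cap f^{-1}(Z)$) in part (1), and the matching of open/closed $B$ with the open-set/closed-set formulations of the quotient property in part (2) are exactly the standard argument. The paper itself gives no proof of this lemma (it cites Engelking), and your write-up is precisely the textbook proof being referenced, so there is nothing to reconcile.
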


First we present another delineation of a MG-space.
\begin{Th}
\label{TT1}
A space is a MG-space if and only if it is a quotient space of some locally Menger space.
\end{Th}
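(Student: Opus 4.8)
The plan is to prove the two implications separately. The main tools are: (i) every locally Menger space is an MG-space (a consequence of Proposition~\ref{T25}); (ii) continuous images of Menger spaces are Menger; and (iii) the characterization of local Mengerness of topological sums recorded earlier.

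First I would handle the implication that a quotient of a locally Menger space is an MG-space. Assume $f:Z\to X$ is a quotient map with $Z$ locally Menger, and take $F\subseteq X$ such that $F\cap M$ is closed in $M$ for every Menger subspace $M$ of $X$; the goal is to conclude that $F$ is closed. Since $f$ is quotient, it suffices to show $f^{-1}(F)$ is closed in $Z$, and since $Z$ is (by (i)) an MG-space, for this it suffices to verify that $f^{-1}(F)\cap N$ is closed in $N$ for every Menger subspace $N$ of $Z$. The key step is to note that $f(N)$ is a Menger subspace of $X$ by (ii), so $F\cap f(N)$ is closed in $f(N)$ by the hypothesis on $F$; then $f^{-1}(F)\cap N=(f|_N)^{-1}(F\cap f(N))$ is closed in $N$ by continuity of the restriction $f|_N$. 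This settles the first direction.

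For the converse I would construct an explicit witness. Let $\{M_\alpha:\alpha\in\Lambda\}$ be the family of all Menger subspaces of $X$ and set $Z=\oplus_{\alpha\in\Lambda}M_\alpha$. Each summand is Menger, hence locally Menger, so $Z$ is locally Menger by (iii). Define $f:Z\to X$ by $f(x,\alpha)=x$. It is surjective because each point of $X$ lies in a singleton (hence Menger) subspace, and continuous because $\varphi_\alpha^{-1}(f^{-1}(U))=U\cap M_\alpha$ is open in $M_\alpha$ whenever $U$ is open in $X$.

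The decisive step, and the place where the MG-hypothesis is actually used, is to check that $f$ is a quotient map. If $f^{-1}(U)$ is open in $Z$, then by the definition of the topological sum $U\cap M_\alpha=\varphi_\alpha^{-1}(f^{-1}(U))$ is open in $M_\alpha$ for every $\alpha$; equivalently, $U\cap M$ is open in $M$ for every Menger subspace $M$ of $X$, so the MG-property of $X$ forces $U$ to be open in $X$. Together with continuity this shows $f$ is quotient, whence $X$ is a quotient of the locally Menger space $Z$. I expect the only genuine obstacle to be the realization that forming the disjoint sum indexed by \emph{all} Menger subspaces is precisely the device that converts the set-theoretic closure/openness test defining an MG-space into the quotient condition; once this is seen, both the local Mengerness of $Z$ and the quotient verification are routine.
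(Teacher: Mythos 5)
Your proposal is correct, and for the forward implication it is the paper's own construction: the sum $Z=\oplus_{\alpha\in\Lambda}M_\alpha$ over \emph{all} Menger subspaces of $X$ with $f(x,\alpha)=x$, for which you have simply written out the routine verifications (surjectivity via singletons, continuity via $\varphi_\alpha^{-1}(f^{-1}(U))=U\cap M_\alpha$, and the quotient condition from the MG-property). Where you genuinely diverge is the converse. The paper never invokes Proposition~\ref{T25} there: it argues pointwise with open sets directly from local Mengerness of the domain, choosing for each $x\in q^{-1}(U)$ an open $V$ and a Menger $M$ with $x\in V\subseteq M$, noting $U\cap q(M)$ is open in $q(M)$, writing $U\cap q(M)=W\cap q(M)$ with $W$ open in the codomain, and checking that $q^{-1}(W)\cap V$ is an open neighbourhood of $x$ contained in $q^{-1}(U)$. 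You instead prove the stronger statement that \emph{any} quotient image of an MG-space is an MG-space, working with the closed-set form of the definition via the identity $f^{-1}(F)\cap N=(f|_N)^{-1}(F\cap f(N))$ and the fact that continuous images of Menger spaces are Menger, and then specialize by citing Proposition~\ref{T25} to see that the locally Menger domain is itself MG. In effect you establish the paper's Corollary~\ref{PP8} as a lemma and deduce Theorem~\ref{TT1} from it, whereas the paper proves Theorem~\ref{TT1} first and obtains Corollary~\ref{PP8} as a consequence. Your arrangement is more modular and slightly more general; the paper's converse has the merit of being self-contained, depending only on the definition of local Mengerness rather than on Proposition~\ref{T25} (which the paper states without proof).
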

\begin{proof}
Let $\{M_\alpha :\alpha\in\Lambda\}$ be the collection of all Menger subspaces of a MG-space $X$. Choose $Y=\oplus_{\alpha\in\Lambda}M_\alpha$. Then $Y$ is locally Menger and $f:Y\to X$ defined by $f(x,\alpha)=x$ is a quotient map.

Conversely suppose that $q:X\to Y$ is a quotient map, where $X$ is locally Menger. Let $U$ be a subset of $Y$ such that $U\cap M$ is open in $M$ for each Menger subspace $M$ of $Y$. Let $x\in q^{-1}(U)$. Choose an open set $V$ and a Menger subspace $M$ of $X$ so that $x\in V\subseteq M$. Now $q(M)$ is a Menger subspace of $Y$ and hence $U\cap q(M)$ is open in $q(M)$. Clearly there is an open set $W$ in $Y$ such that $U\cap q(M)=W\cap q(M)$. This shows that $x\in q^{-1}(W)\cap M$ and hence $x\in q^{-1}(W)\cap V$. Also $q^{-1}(W)\cap V$ is an open set in $X$ containing $x$ such that $q^{-1}(W)\cap V\subseteq q^{-1}(U)$. Evidently $q^{-1}(U)$ is open in $X$. Thus $U$ is open in $Y$ and $Y$ is a MG-space.
\end{proof}

\begin{Cor}
\label{PP8}
If $f:X\to Y$ is a quotient mapping from a MG-space $X$ onto a space $Y$, then $Y$ is a MG-space.
\end{Cor}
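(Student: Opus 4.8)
The plan is to read this off Theorem~\ref{TT1} together with the elementary fact that a composition of quotient maps is again a quotient map. First I would invoke the forward direction of Theorem~\ref{TT1}: since $X$ is a MG-space, there exist a locally Menger space $Z$ and a quotient mapping $q:Z\to X$. Composing with the given quotient map $f:X\to Y$ yields $f\circ q:Z\to Y$, and this is a surjection from a locally Menger space onto $Y$.

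The key step is to verify that $f\circ q$ is itself a quotient map. This is routine: for $V\subseteq Y$ one has $(f\circ q)^{-1}(V)=q^{-1}\bigl(f^{-1}(V)\bigr)$, so $V$ is open in $Y$ exactly when $f^{-1}(V)$ is open in $X$ (because $f$ is quotient), which in turn holds exactly when $q^{-1}(f^{-1}(V))$ is open in $Z$ (because $q$ is quotient). Hence the open sets of $Y$ are precisely those whose preimages under $f\circ q$ are open, which is the definition of a quotient map. Once this is in hand, the converse direction of Theorem~\ref{TT1}, applied to the quotient map $f\circ q$ out of the locally Menger space $Z$, immediately gives that $Y$ is a MG-space.

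I do not anticipate any real obstacle here; the only thing to be careful about is that the quotient topology is indeed transitive under composition, which I would state explicitly rather than leave implicit. As a self-contained alternative that avoids Theorem~\ref{TT1} altogether, I could argue directly: given $U\subseteq Y$ with $U\cap M$ open in $M$ for every Menger subspace $M$ of $Y$, it suffices (since $f$ is quotient) to show $f^{-1}(U)$ is open in $X$, and since $X$ is a MG-space it suffices to show $f^{-1}(U)\cap N$ is open in $N$ for each Menger subspace $N$ of $X$. For such $N$, the image $f(N)$ is Menger (Mengerness is preserved by continuous images), so $U\cap f(N)$ is open in $f(N)$; continuity of the restriction $f|_N:N\to f(N)$ then gives that $(f|_N)^{-1}(U\cap f(N))=f^{-1}(U)\cap N$ is open in $N$, as required. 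Either route closes the argument, but the first is the natural one-line corollary of the theorem just proved.
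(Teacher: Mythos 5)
Your primary argument is correct and is exactly the route the paper intends: the corollary is stated without proof immediately after Theorem~\ref{TT1}, the implicit argument being precisely your composition $f\circ q$ of the quotient map supplied by the forward direction of Theorem~\ref{TT1} with $f$, followed by the converse direction applied to $f\circ q$. Your self-contained alternative (pulling $U$ back through $f|_N$ for each Menger $N\subseteq X$, using that continuous images of Menger spaces are Menger) is also valid and in fact bypasses Theorem~\ref{TT1} entirely, but the first route is the one matching the paper.
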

By Lemma~\ref{LL2}, we have the following observation.
\begin{Cor}
\label{PP9}
Every closed subspace of a MG-space is a MG-space.
\end{Cor}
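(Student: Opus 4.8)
The plan is to combine the two characterizations that have just been set up: Theorem~\ref{TT1}, which identifies MG-spaces with quotients of locally Menger spaces, and the quotient-restriction principle of Lemma~\ref{LL202}. First I would use Theorem~\ref{TT1} to write the given MG-space $X$ as a quotient $q:Z\to X$ of some locally Menger space $Z$. Given a closed subspace $A$ of $X$, the goal is then to realize $A$ itself as a quotient of a locally Menger space, so that the converse direction of Theorem~\ref{TT1} applies directly.

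The two facts I would assemble are the following. Since $A$ is closed in $X$ and $q$ is continuous, the preimage $q^{-1}(A)$ is closed in $Z$; a closed set is a (trivial) $F_\sigma$-set, and the locally Menger property is preserved under $F_\sigma$-subspaces, so $q^{-1}(A)$ is again locally Menger. On the other hand, applying Lemma~\ref{LL202} to the quotient map $q$ and the closed set $A\subseteq X$ shows that the restriction $q_A:q^{-1}(A)\to A$ is again a quotient mapping. Putting these together, $A$ is a quotient image of the locally Menger space $q^{-1}(A)$, and hence a MG-space by Theorem~\ref{TT1} (equivalently, by Corollary~\ref{PP8}).

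The one step carrying genuine content is the reduction establishing that $q^{-1}(A)$ remains locally Menger; this rests entirely on viewing a closed subset as an $F_\sigma$-subset and invoking the already-noted $F_\sigma$-preservation of local Mengerness. Everything else is routine: continuity gives closedness of $q^{-1}(A)$, and Lemma~\ref{LL202} supplies the quotient restriction without further work. In particular no separation or regularity hypotheses are required, since the $F_\sigma$-preservation holds in full generality.
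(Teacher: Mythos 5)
Your proof is correct and takes essentially the same route as the paper: the corollary there is obtained precisely by combining Theorem~\ref{TT1} (write the MG-space as a quotient $q:Z\to X$ of a locally Menger space) with Lemma~\ref{LL2} (restrict $q$ to $q^{-1}(A)\to A$ for closed $A$). Your explicit justification that $q^{-1}(A)$ remains locally Menger, as a closed (hence $F_\sigma$) subspace of $Z$, is exactly the step the paper leaves implicit.
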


The following example suggests that a MG-space need not be locally Menger.
\begin{Ex}
\label{EE01}
Let $X$ be the topological sum of $\omega_1$ copies of $[0,1]$. Then $X$ is locally Menger. Let $Y$ be the quotient space of $X$ obtained by identifying all the zero points to a single point $y_0$. The mapping $q:X\to Y$ given by
\begin{equation*}
q(x,\alpha)=
\begin{cases}
y_0 & \text{if~} x=0\\
(x,\alpha) & \text{otherwise}
\end{cases}
\end{equation*}
is a quotient mapping. Clearly $Y$ is a MG-space by Theorem~\ref{TT1}. We now show that $Y$ is not locally Menger. It is sufficient to show that $Y$ is not locally Lindel\"{o}f. Assume the contrary. Then we can find an open subset $U$ and a Lindel\"{o}f subspace $L$ of $Y$ such that $y_0\in U\subseteq L$. Clearly for each $\alpha\in\omega_1$ there exists $a_\alpha\in(0,1]$ such that $\{y_0\}\cup\oplus_{\alpha\in\omega_1}(0,a_\alpha)\subseteq U$. Choose $V=\{y_0\}\cup\oplus_{\alpha\in\omega_1}(0,b_\alpha)$, where $b_\alpha\in(0,a_\alpha)$ for all $\alpha\in\omega_1$. Clearly  $\overline{V}=\{y_0\}\cup\oplus_{\alpha\in\omega_1}(0,b_\alpha]\subseteq\{y_0\}\cup\oplus_{\alpha\in\omega_1}(0,a_\alpha)$ and hence $\overline{V}$ is Lindel\"{o}f. For each $\alpha\in\omega_1$ choose $U_\alpha=\{(x,\alpha) : x\in(0,a_\alpha)\}$. Also take $W=\{y_0\}\cup\oplus_{\alpha\in\omega_1}(0,c_\alpha)$, where $c_\alpha\in(0,b_\alpha)$ for all $\alpha\in\omega_1$. Then $\mathcal{U}=\{U_\alpha : \alpha\in\omega_1\}\cup\{W\}$ is a cover of $\overline{V}$ by open sets in $Y$. It follows that there is no countable subfamily of $\mathcal{U}$ that covers $\overline{V}$. Thus $\overline{V}$ is not Lindel\"{o}f, contradicting our assumption that $Y$ is locally Lindel\"{o}f.
\end{Ex}
It also follows from the above example that the locally Menger property is not an invariant under quotient mappings.

\begin{Th}
\label{TT2}
Let $X$ be a locally Menger Hausdorff $P$-space and $Y$ be a $P$-space. If $q:Y\to Z$ is a quotient mapping, then $f:X\times Y\to X\times Z$ given by $f(x,y)=(x,q(y))$ is also a quotient mapping.
\end{Th}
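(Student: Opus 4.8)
The plan is to follow the classical Whitehead-type strategy for showing that a product with the identity preserves quotient maps, with compactness of the first factor replaced by the locally Menger hypothesis on $X$ and the $P$-space hypothesis on $Y$. Since $q$ is a continuous surjection, $f=\mathrm{id}_X\times q$ is also a continuous surjection, so only the quotient condition needs checking. I would use the criterion that a continuous surjection is a quotient map precisely when it carries saturated open sets to open sets. A set $O\subseteq X\times Y$ is saturated with respect to $f$ exactly when each vertical slice $O_x=\{y:(x,y)\in O\}$ is $q$-saturated in $Y$, and for such $O$ one has $(x,z)\in f(O)$ if and only if $\{x\}\times q^{-1}(z)\subseteq O$. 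Thus it suffices to fix a saturated open set $O$ and prove that $f(O)$ is open.

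For a point $(x_0,z_0)\in f(O)$, I would pick $y_0\in q^{-1}(z_0)$ with $(x_0,y_0)\in O$. The set $\{x:(x,y_0)\in O\}$ is an open neighbourhood of $x_0$, and invoking Theorem~\ref{T71} (the $P$-space version of the locally Menger characterization) I would produce an open set $U$ with $x_0\in U\subseteq\overline U\subseteq\{x:(x,y_0)\in O\}$ and $\overline U$ Menger. This yields the ``tube'' $\overline U\times\{y_0\}\subseteq O$, playing the role of the compact tube in the classical proof.

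Next I define $G=\{y\in Y:\overline U\times\{y\}\subseteq O\}$. Using that each slice $O_x$ is $q$-saturated one checks that $G$ is $q$-saturated, and writing $G=Y\setminus p\bigl(C\cap(\overline U\times Y)\bigr)$, where $p$ is the projection onto $Y$ and $C=(X\times Y)\setminus O$, one deduces that $G$ is open: since $\overline U$ is Menger, hence Lindel\"of, and $Y$ is a $P$-space, Lemma~\ref{LL1} guarantees that the projection $\overline U\times Y\to Y$ is closed, so the image of the closed set $C\cap(\overline U\times Y)$ is closed. Setting $N=q(G)$, saturation gives $q^{-1}(N)=G$, so $N$ is open because $q$ is quotient; moreover $z_0=q(y_0)\in N$ and $U\times N\subseteq f(O)$. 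As $(x_0,z_0)$ was arbitrary, $f(O)$ is open and $f$ is a quotient map.

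The main obstacle is precisely the openness of $G$. In the classical Whitehead theorem this is the assertion that projection along a compact factor is closed (the tube lemma); here compactness is unavailable, and the correct substitute is Lemma~\ref{LL1}, which trades compactness of $\overline U$ for the combination ``$\overline U$ Lindel\"of (via Mengerness) together with $Y$ a $P$-space.'' Verifying that this substitution genuinely applies---that $\overline U$ may be taken Menger through Theorem~\ref{T71} and that the $P$-space hypothesis on $Y$ is exactly what Lemma~\ref{LL1} requires---is where both hypotheses of the theorem are consumed, and is the crux of the argument.
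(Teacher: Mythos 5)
Your proposal is correct and takes essentially the same route as the paper's own proof: the paper verifies the quotient condition directly for a set $W\subseteq X\times Z$ with $f^{-1}(W)$ open (which is automatically a saturated open set), builds the same Menger tube $\overline{U}\times\{y_0\}$ via Theorem~\ref{T71}, and defines $V=\{w\in Z:\overline{U}\times q^{-1}(w)\subseteq f^{-1}(W)\}$ --- precisely your $N=q(G)$, with $q^{-1}(V)$ equal to your $G$ --- whose openness follows from the same appeal to the closed-projection Lemma~\ref{LL1}. Your saturated-open-set phrasing is only a cosmetic repackaging of the paper's argument, not a different proof.
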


\begin{proof}
Let $W\subseteq X\times Z$ be such that $f^{-1}(W)$ is  open in $X\times Y$. Let $(x,z)\in W$. Choose $y\in Y$ such that $y\in q^{-1}(z)$. Clearly $(x,y)\in f^{-1}(W)$ and hence there exist open sets $\tilde{U}$ and $\tilde{V}$ in $X$ and $Y$ respectively such that $(x,y)\in\tilde{U}\times\tilde{V}\subseteq f^{-1}(W)$. Choose an open set $U$ containing $x$ in $X$  such that $\overline{U}$ is a Menger subspace of $X$ with $\overline{U}\subseteq\tilde{U}$ and hence $\overline{U}\times\{y\}\subseteq f^{-1}(W)$. It is easy to see that for any $v\in Y$, if $\overline{U}\times\{v\}\subseteq f^{-1}(W)$, then $\overline{U}\times q^{-1}q(v)\subseteq f^{-1}(W)$. Clearly $\overline{U}\times q^{-1}(z)\subseteq f^{-1}(W)$. Choose $V=\{w\in Z : \overline{U}\times q^{-1}(w)\subseteq f^{-1}(W)\}$. We then have $(x,z)\in U\times V\subseteq W$. The proof will be furnished if we can show that $V$ is an open subset of $Y$. Now $q^{-1}(V)=\{v\in Y : \overline{U}\times\{v\}\subseteq f^{-1}(W)\}$. By Lemma~\ref{LL1}, the projection mapping $p:\overline{U}\times Y\to Y$ is closed. Clearly $q^{-1}(V)$ is equal to the complement of $p((\overline{U}\times Y)\setminus f^{-1}(W))$ and hence $V$ is open in $Y$.
\end{proof}

Next two results can be easily verified.
\begin{Prop}
\label{PP5}
Let $f:X\to Y$ be a mapping.
\begin{enumerate}[wide,label={\upshape(\arabic*)},
leftmargin=*]
\item If $X$ is a MG-space, then $f$ is continuous if and only if for every Menger subspace $M$ of $X$ the restriction $f|_M: M\to Y$ is continuous.
\item If $Y$ is a MG-space, then $f$ is closed (respectively, open) if and only if for any Menger subspace $M$ of $Y$ the restriction $f_M:f^{-1}(M)\to M$ is closed (respectively, open).
\end{enumerate}
\end{Prop}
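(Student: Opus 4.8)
The plan is to handle each biconditional by reducing a global statement about $f$ to statements about its restrictions, and then using the defining property of an MG-space to detect openness or closedness by testing only against Menger subspaces. In every case the forward implication is the easy one (a restriction of a continuous, closed, or open map inherits that property), while the converse is where the MG-space hypothesis does the work.

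For part $(1)$ the forward implication is immediate: if $f$ is continuous, then for any subspace, in particular any Menger subspace $M$, the restriction $f|_M$ is continuous. For the converse I would fix an arbitrary open set $V\subseteq Y$ and show that $f^{-1}(V)$ is open in $X$. Since $X$ is an MG-space, it suffices to verify that $f^{-1}(V)\cap M$ is open in $M$ for every Menger subspace $M$ of $X$. The key observation is the identity $f^{-1}(V)\cap M=(f|_M)^{-1}(V)$, and because $f|_M$ is continuous by hypothesis, this set is open in $M$. The MG-space property then delivers that $f^{-1}(V)$ is open in $X$, so $f$ is continuous.

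For part $(2)$ I would present the closed case; the open case is obtained verbatim by replacing ``closed'' with ``open'' throughout. The forward implication follows directly from Lemma~\ref{LL201}: if $f$ is closed, then for the subspace $M$ of $Y$ the restriction $f_M:f^{-1}(M)\to M$ is closed. For the converse, take a closed set $F\subseteq X$ and aim to show that $f(F)$ is closed in $Y$. Since $Y$ is an MG-space, it is enough to check that $f(F)\cap M$ is closed in $M$ for every Menger subspace $M$ of $Y$. The crucial step is the set-theoretic identity
\[ f(F)\cap M=f_M\big(F\cap f^{-1}(M)\big), \]
which I would verify by a routine element chase, both sides reducing to $\{f(x):x\in F,\ f(x)\in M\}$. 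Now $F\cap f^{-1}(M)$ is closed in the subspace $f^{-1}(M)$, and since $f_M$ is closed by hypothesis, its image $f(F)\cap M$ is closed in $M$. Invoking the MG-space property of $Y$ then yields that $f(F)$ is closed in $Y$, so $f$ is closed.

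I expect the only point requiring care, rather than a genuine obstacle, to be the bookkeeping around the codomain-restricted map $f_M$ in part $(2)$: one must keep in mind that $f_M$ is defined on $f^{-1}(M)$ with codomain exactly $M$, so that ``closed'' is understood relative to $M$, and that the identity $f(F)\cap M=f_M(F\cap f^{-1}(M))$ is precisely what transports closedness from the Menger subspaces of $Y$ back to $Y$ itself through the MG-space property. Everything else is a straightforward application of the definition of an MG-space together with Lemma~\ref{LL201}.
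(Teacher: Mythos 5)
Your proof is correct and is exactly the routine verification the paper intends: the paper states this proposition without proof (``Next two results can be easily verified''), and your argument --- the identities $f^{-1}(V)\cap M=(f|_M)^{-1}(V)$ and $f(F)\cap M=f_M\bigl(F\cap f^{-1}(M)\bigr)$, fed into the defining (open-set and closed-set) formulations of a MG-space, with Lemma~\ref{LL201} handling the forward direction of part (2) --- is that verification. No gaps; your cautionary remark about $f_M$ having codomain $M$ is the right point to keep straight.
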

%
%

\begin{Prop}
\label{PP7}
 Let $Y$ be a MG-space and $f:X\to Y$ be a continuous mapping. If for any Menger subspace $M$ of $Y$ the restriction $f_M:f^{-1}(M)\to M$ is a quotient mapping, then $f:X\to Y$ is a quotient mapping.
\end{Prop}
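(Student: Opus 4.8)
The plan is to establish the nontrivial half of the quotient condition by leveraging the fact that $Y$ is a MG-space, thereby reducing the verification of openness in $Y$ to a verification on each Menger subspace, where the hypothesis applies directly. Recall that $f$ is a quotient mapping exactly when it is a continuous surjection for which a set $U\subseteq Y$ is open in $Y$ if and only if $f^{-1}(U)$ is open in $X$. Continuity is assumed, and the implication that openness of $U$ gives openness of $f^{-1}(U)$ is immediate from continuity, so the real content lies in the converse implication, together with a quick check of surjectivity.

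First I would settle surjectivity, which is in fact forced by the hypothesis. For any $y\in Y$ the singleton $\{y\}$ is compact, hence $\sigma$-compact, hence a Menger subspace of $Y$; applying the hypothesis to $M=\{y\}$ makes $f_{\{y\}}:f^{-1}(\{y\})\to\{y\}$ a quotient mapping, and in particular a surjection, so $f^{-1}(y)\neq\emptyset$. Thus $f$ is surjective.

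For the main implication, I would take $U\subseteq Y$ with $f^{-1}(U)$ open in $X$ and aim to show that $U$ is open in $Y$. Since $Y$ is a MG-space, it is enough to prove that $U\cap M$ is open in $M$ for every Menger subspace $M$ of $Y$. Fixing such an $M$, the crucial step is the identity $f_M^{-1}(U\cap M)=f^{-1}(U)\cap f^{-1}(M)$, valid because $f(z)\in M$ for every $z\in f^{-1}(M)$. The right-hand side is open in the subspace $f^{-1}(M)$, since $f^{-1}(U)$ is open in $X$; hence $f_M^{-1}(U\cap M)$ is open in $f^{-1}(M)$. Because $f_M$ is a quotient mapping by hypothesis, openness of this preimage forces $U\cap M$ to be open in $M$. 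As $M$ was an arbitrary Menger subspace and $Y$ is a MG-space, we conclude that $U$ is open in $Y$, which finishes the verification.

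I do not anticipate a genuine obstacle here: the argument is a clean unwinding of the definitions of quotient map and MG-space. The only points requiring attention are recognizing that singletons are Menger, so that surjectivity is automatic from the hypothesis, and correctly matching the preimage $f_M^{-1}(U\cap M)$ with $f^{-1}(U)\cap f^{-1}(M)$ so that the quotient property of each restriction $f_M$ can be invoked.
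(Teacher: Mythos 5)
Your proof is correct, and since the paper gives no argument at all for this proposition (it is one of the ``next two results [that] can be easily verified''), your write-up is exactly the definitional unwinding the authors intended: test openness of $U$ against each Menger subspace $M$ via the identity $f_M^{-1}(U\cap M)=f^{-1}(U)\cap f^{-1}(M)$, invoke the quotient hypothesis on $f_M$, and then the MG-property of $Y$. Your preliminary observation that surjectivity of $f$ is forced by applying the hypothesis to singletons (which are trivially Menger) is a point the paper silently glosses over, and it is worth making explicit since a quotient mapping is by definition onto.
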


In combination with Lemma~\ref{LL2}, we have the following.
\begin{Cor}
\label{CC2}
If $Y$ is a Hausdorff $P$, MG-space, then a continuous mapping $f:X\to Y$ is a quotient mapping if and only if for any Menger subspace $M$ of $Y$ the restriction $f_M:f^{-1}(M)\to M$ is a quotient mapping.
\end{Cor}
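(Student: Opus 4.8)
The plan is to prove the two directions of the biconditional separately, reducing each to a result already available in the excerpt; the one piece of genuine content is an observation that lets us bring the Menger subspaces of $Y$ under the scope of Lemma~\ref{LL2}.

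The key preliminary observation I would establish is that \emph{every Menger subspace $M$ of $Y$ is closed in $Y$}. Indeed, every Menger space is Lindel\"{o}f, and, as already noted in the paper, a Lindel\"{o}f subspace of a Hausdorff $P$-space is closed. Since $Y$ is a Hausdorff $P$-space, each of its Menger subspaces is therefore a closed subset. This is exactly what is needed to invoke Lemma~\ref{LL202}, which applies to restrictions over closed (or open) subsets.

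For the forward direction, I would assume $f$ is a quotient mapping and let $M$ be any Menger subspace of $Y$. By the observation above $M$ is closed in $Y$, so taking $B=M$ in Lemma~\ref{LL202} shows that the restriction $f_M:f^{-1}(M)\to M$ is a quotient mapping. The reverse direction requires no new argument: its hypothesis, namely that $f_M$ is a quotient mapping for every Menger subspace $M$ of $Y$, together with the fact that $Y$ is a MG-space, is precisely the hypothesis of Proposition~\ref{PP7}, whose conclusion is that $f$ is a quotient mapping.

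I expect the only subtle step to be the reduction of Menger subspaces to closed subsets. Lemma~\ref{LL2} is stated only for open or closed $B$, and a Menger subspace of an arbitrary space need be neither; it is exactly the Hausdorff $P$-space hypothesis that forces Menger subspaces to be closed and hence makes the forward implication go through. The MG-space hypothesis, by contrast, enters only through Proposition~\ref{PP7} in the (already established) reverse implication.
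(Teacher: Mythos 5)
Your proposal is correct and is exactly the argument the paper intends: the forward direction follows from Lemma~\ref{LL202} once you note that Menger subspaces of the Hausdorff $P$-space $Y$ are Lindel\"{o}f and hence closed, and the reverse direction is precisely Proposition~\ref{PP7}. This matches the paper's own derivation, which states the corollary as an immediate combination of Proposition~\ref{PP7} with Lemma~\ref{LL2}.
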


\begin{Prop}
\label{PP10}
The topological sum $\oplus_{\alpha\in\Lambda}X_\alpha$ is a MG-space if and only if each $X_\alpha$ is a MG-space.
\end{Prop}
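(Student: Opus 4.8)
The plan is to verify the two implications separately, relying on two elementary facts about a topological sum: each summand $X_\beta$ is embedded via $\varphi_\beta$ as a \emph{clopen} (in particular closed) subspace, and a set $U\subseteq\oplus_{\alpha\in\Lambda}X_\alpha$ is open exactly when each $\varphi_\alpha^{-1}(U)$ is open in $X_\alpha$. These two observations carry essentially the whole argument.

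For the forward direction, suppose $\oplus_{\alpha\in\Lambda}X_\alpha$ is a MG-space. Since each summand $X_\beta$ sits inside the sum as a closed subspace, I would simply invoke Corollary~\ref{PP9}, which states that every closed subspace of a MG-space is a MG-space. Hence each $X_\beta$ is a MG-space, and no further work is needed here.

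For the converse, assume each $X_\alpha$ is a MG-space and let $U\subseteq\oplus_{\alpha\in\Lambda}X_\alpha$ be a set with $U\cap M$ open in $M$ for every Menger subspace $M$ of the sum. By the sum-topology criterion it suffices to show that $\varphi_\alpha^{-1}(U)=U\cap X_\alpha$ is open in $X_\alpha$ for each fixed $\alpha$. To see this I would use that $X_\alpha$ is itself a MG-space, so it is enough to check that $(U\cap X_\alpha)\cap N$ is open in $N$ for every Menger subspace $N$ of $X_\alpha$. The crucial point is that such an $N$, being a subspace of the subspace $X_\alpha$, is also a Menger subspace of the whole sum; thus the hypothesis on $U$ yields that $U\cap N$ is open in $N$. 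Since $N\subseteq X_\alpha$ we have $U\cap N=(U\cap X_\alpha)\cap N$, and the MG-property of $X_\alpha$ then forces $U\cap X_\alpha$ to be open in $X_\alpha$. As $\alpha$ was arbitrary, $U$ is open in the sum, so the sum is a MG-space.

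I do not expect a genuine obstacle: the argument is driven entirely by the clopenness of the summands together with Corollary~\ref{PP9}. The single point that must be stated with care is that a Menger subspace of $X_\alpha$ remains Menger in the ambient sum, which is immediate because the two subspace topologies coincide; this is what lets the global hypothesis on $U$ be transferred summand by summand. It is worth noting that the converse requires no structural description of the Menger subspaces of the sum itself, only the trivial inclusion of Menger subspaces of a summand into the sum.
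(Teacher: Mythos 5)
Your proposal is correct and follows essentially the same route as the paper: the forward direction via Corollary~\ref{PP9} (each summand being a closed subspace of the sum), and the converse by reducing openness of $U$ to openness of each trace $U\cap X_\alpha$, using that a Menger subspace of a summand is a Menger subspace of the whole sum so that the global hypothesis transfers. The only cosmetic difference is that the paper keeps the identification $X_\alpha\cong X_\alpha\times\{\alpha\}$ explicit via the projection map, while you treat the summands as literal subspaces; the mathematical content is identical.
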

\begin{proof}
We give only proof for the converse part. Let $V$ be a subset of $\oplus_{\alpha\in\Lambda}X_\alpha$ such that for each Menger subspace $M$ of $\oplus_{\alpha\in\Lambda}X_\alpha$, $V\cap M$ is open in $M$. For each $\alpha\in\Lambda$, choose $U_\alpha=\{x\in X_\alpha : (x,\alpha)\in V\}$. Let $Y$ be a Menger subspace of $X_\alpha$. Clearly $V\cap(Y\times\{\alpha\})$ is open in $Y\times\{\alpha\}$. Then $U_\alpha$ is the image of $V\cap(Y\times\{\alpha\})$ under the projection mapping $p:Y\times\{\alpha\}\to Y$. Thus $U_\alpha$ is open in $Y$ and consequently $U_\alpha$ is open in $X_\alpha$. It follows that $U_\alpha\times\{\alpha\}$ is open in $X_\alpha\times\{\alpha\}$ and hence it is open in $\oplus_{\alpha\in\Lambda}X_\alpha$. Clearly $V=\cup_{\alpha\in\Lambda}(U_\alpha\times\{\alpha\})$ and thus we have $V$ is open in $\oplus_{\alpha\in\Lambda}X_\alpha$.
\end{proof}

\begin{Prop}
\label{PN2}
Suppose that $f:X\to Y$ is a mapping from a space $X$ onto a MG-space $Y$. If $f^{-1}(Z)$ is Menger for each Hausdorff Menger $P$-space $Z\subseteq Y$, then $f$ is closed.
\end{Prop}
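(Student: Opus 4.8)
The plan is to adapt the classical fact that a proper continuous map into a $k$-space is closed, replacing ``compact'' by ``Menger'', ``$k$-space'' by ``MG-space'', and the principle ``compact subsets of a Hausdorff space are closed'' by the observation recorded in the preliminaries that a Lindel\"{o}f (in particular, a Menger) subspace of a Hausdorff $P$-space is closed. Throughout I treat $f$ as continuous, as a mapping should be.

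First I would fix a closed set $A\subseteq X$ and aim to show that $f(A)$ is closed in $Y$. Since $Y$ is a MG-space, it suffices to show that $f(A)\cap M$ is closed in $M$ for every Menger subspace $M$ of $Y$. The elementary identity $f(A)\cap M=f(A\cap f^{-1}(M))$ transfers the problem to the image of the closed set $A\cap f^{-1}(M)$. For a Hausdorff Menger $P$-subspace $Z$ this is immediate: by hypothesis $f^{-1}(Z)$ is Menger, so its closed subset $A\cap f^{-1}(Z)$ is Menger, whence the continuous image $f(A)\cap Z=f(A\cap f^{-1}(Z))$ is Menger; being a Menger, hence Lindel\"{o}f, subspace of the Hausdorff $P$-space $Z$, it is closed in $Z$. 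This is precisely where both the Hausdorffness and the $P$-space hypotheses are indispensable, since otherwise a Menger subspace of $Z$ need not be closed and the argument would collapse.

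The main obstacle is the gap between the quantifiers: the MG-property tests closedness against \emph{every} Menger subspace $M$, whereas the hypothesis only controls preimages of Hausdorff Menger $P$-subspaces. I would bridge this by a contrapositive argument. Assuming $f(A)$ is not closed, the MG-property yields a Menger subspace on which $f(A)$ fails to be relatively closed, that is, a point $y_0\in\overline{f(A)}\setminus f(A)$ lying in the relative closure of $f(A)$ within some Menger subspace. The delicate step, which I expect to be the heart of the proof, is to locate inside this witness a Hausdorff Menger $P$-subspace $Z$ containing $y_0$ together with enough of $f(A)$ so that $f(A)\cap Z$ is not closed in $Z$, contradicting the computation of the previous paragraph. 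Once this reduction to Hausdorff Menger $P$-subspaces is secured, the proof closes.
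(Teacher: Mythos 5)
Your first paragraph coincides exactly with the paper's entire proof: for closed $F\subseteq X$ and a Hausdorff Menger $P$-subspace $Z\subseteq Y$, the set $f(F)\cap Z=f(F\cap f^{-1}(Z))$ is Menger, hence a Lindel\"{o}f subspace of the Hausdorff $P$-space $Z$, hence closed in $Z$. The trouble is your second paragraph: it is a plan rather than an argument, and the ``delicate step'' you defer---finding, inside an arbitrary Menger subspace of $Y$ witnessing non-closedness of $f(F)$, a Hausdorff Menger $P$-subspace that still witnesses it---cannot be carried out. In a metrizable space every $P$-subspace is discrete (if $x\in Z$ were non-isolated, then $\bigcap_{n\in\mathbb{N}}B(x,1/n)\cap Z=\{x\}$ would be open in $Z$), so the Hausdorff Menger $P$-subspaces of Euclidean $\mathbb{R}$ are exactly the countable discrete ones; and in a discrete space every subset is relatively closed, so these test spaces can never detect a failure of closedness. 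This defeats not only your plan but the statement itself: let $X$ be $\mathbb{R}$ with the discrete topology, $Y$ Euclidean $\mathbb{R}$ (sequential, hence a MG-space by Example~\ref{PP4}), and $f$ the identity map. Every Hausdorff Menger $P$-subspace $Z$ of $Y$ is countable discrete, so $f^{-1}(Z)$ is countable discrete and hence Menger, and the hypothesis of the proposition holds; yet the image of the closed set $\{1/n : n\in\mathbb{N}\}\subseteq X$ is not closed in $Y$, so $f$ is not closed.

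What your quantifier diagnosis actually exposes is a gap in the paper's own proof: the paper verifies that $f(F)\cap Z$ is closed in $Z$ only for Hausdorff Menger $P$-subspaces $Z$ and then invokes the MG property, whose defining condition quantifies over \emph{all} Menger subspaces of $Y$---precisely the leap you declined to make. The proposition becomes true, and your first paragraph becomes a complete proof, once the hypotheses force the two test classes to coincide; for instance, assume in addition that $Y$ is a Hausdorff $P$-space (the setting of Corollary~\ref{CC2}). Then every Menger subspace $M$ of $Y$ is itself a Hausdorff Menger $P$-space, so your computation shows $f(F)\cap M$ is closed in $M$ for every Menger $M$, and the MG property legitimately gives that $f(F)$ is closed. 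In short, your instinct about where the difficulty lies is correct, but the bridge you hoped to build does not exist; the repair must be made in the hypotheses, not in the proof.
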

\begin{proof}
Take a closed subset $F$ of $X$ and a Hausdorff Menger $P$-space $Z\subseteq Y$. Now $f^{-1}(Z)\cap F$ is Menger and $Z\cap f(F)=f(f^{-1}(Z)\cap F)$ is Menger as well. This shows that $Z\cap f(F)$ is closed in $Z$. Since $Y$ is a MG-space, $f(F)$ is closed in $Y$ and hence $f$ is closed.
\end{proof}

\begin{Prop}
\label{PP11}
If $X$ is a locally compact Hausdorff space and $Y$ is a MG-space, then $X\times Y$ is a MG-space.
\end{Prop}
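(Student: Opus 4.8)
The plan is to verify the defining condition of a MG-space for $X\times Y$ directly, rather than routing through the quotient characterization. So I take a set $W\subseteq X\times Y$ with the property that $W\cap M$ is open in $M$ for every Menger subspace $M$ of $X\times Y$, and I show $W$ is open by producing, for each point $(x_0,y_0)\in W$, a basic open neighbourhood $U_1\times V\subseteq W$. The role of local compactness of $X$ is to let me shrink the first coordinate to a \emph{compact} set, which is simultaneously Menger and $\sigma$-compact; this is exactly what couples the MG-property of $Y$ to the product.

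First I would fix $(x_0,y_0)\in W$ and, using local compactness together with regularity of the Hausdorff space $X$, manufacture a precompact open set $U_1$ around $x_0$ that already lies \emph{inside} $W$ at height $y_0$. Concretely, choose any open $U\ni x_0$ with $\overline U$ compact; since $\overline U\times\{y_0\}$ is compact, hence Menger, the hypothesis gives that $\{x\in\overline U:(x,y_0)\in W\}$ is open in $\overline U$ and contains $x_0$. Intersecting with $U$ yields an open set $G\ni x_0$ in $X$ with $G\times\{y_0\}\subseteq W$, and inside $G$ I select an open $U_1$ with $x_0\in U_1\subseteq\overline{U_1}\subseteq G$ and $\overline{U_1}$ compact. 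Then $\overline{U_1}\times\{y_0\}\subseteq W$.

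Next I set $V=\{y\in Y:\overline{U_1}\times\{y\}\subseteq W\}$. By construction $y_0\in V$ and $\overline{U_1}\times V\subseteq W$, so $U_1\times V$ is a neighbourhood of $(x_0,y_0)$ contained in $W$ as soon as $V$ is open. To prove $V$ open I invoke that $Y$ is a MG-space: it suffices to show $V\cap M'$ is open in $M'$ for every Menger subspace $M'\subseteq Y$. For such $M'$, the product $\overline{U_1}\times M'$ is the product of a $\sigma$-compact space with a Menger space, hence Menger (by \cite[Proposition~4.7]{CST}); thus $W\cap(\overline{U_1}\times M')$ is open in $\overline{U_1}\times M'$ and its complement $C$ there is closed. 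Because $\overline{U_1}$ is compact, the projection $p\colon\overline{U_1}\times M'\to M'$ is closed, so $p(C)$ is closed in $M'$; and one checks that $M'\setminus p(C)=\{y\in M':\overline{U_1}\times\{y\}\subseteq W\}=V\cap M'$, which is therefore open in $M'$. Since $Y$ is a MG-space, $V$ is open, whence $U_1\times V\subseteq W$ witnesses openness of $W$ at $(x_0,y_0)$, and $X\times Y$ is a MG-space.

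The main obstacle is the openness of $V$, and the entire argument hinges on the compactness of $\overline{U_1}$ furnished by local compactness of $X$: it is what makes the auxiliary product $\overline{U_1}\times M'$ Menger (so that the MG-test for $Y$ can be applied after restricting to Menger slices of $Y$) and simultaneously makes the projection $p$ closed (so that the tube set $V$ comes out open). Neither of these is available without compactness of the first factor, which explains why the hypothesis is used in an essential way. As an alternative I could instead write $Y$ as a quotient of a locally Menger space via Theorem~\ref{TT1}, form $X\times L$ (locally Menger, since a locally Menger space times a locally $\sigma$-compact space is locally Menger), and argue that $\mathrm{id}_X\times q$ is a quotient map; but the direct route above avoids appealing to the preservation of quotients under products with locally compact factors and stays within the results already established.
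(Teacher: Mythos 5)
Your proof is correct, but it takes a genuinely different route from the paper's. The paper argues through its quotient characterization: by Theorem~\ref{TT1} it writes $Y$ as the image of a quotient map $q\colon Z\to Y$ with $Z$ locally Menger, shows that $f=\mathrm{id}_X\times q\colon X\times Z\to X\times Y$ is a quotient map by repeating the tube argument of Theorem~\ref{TT2} (with compactness of closures of basic neighbourhoods in $X$ supplying the closed projection, in place of Lemma~\ref{LL1}), and then concludes from Theorem~\ref{TT1} again, since $X\times Z$ is locally Menger (being a product of a locally Menger space with a locally $\sigma$-compact one). You instead verify the MG-condition for $X\times Y$ directly, applying the hypothesis on $W$ twice: once to the compact slice $\overline U\times\{y_0\}$ to shrink the first coordinate, and once to $\overline{U_1}\times M'$ --- Menger by \cite[Proposition 4.7]{CST} --- to show that the tube set $V=\{y\in Y:\overline{U_1}\times\{y\}\subseteq W\}$ passes the MG-test in $Y$; both steps check out, and the identification $M'\setminus p(C)=V\cap M'$ is exactly right. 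The technical core is the same in both arguments (compact neighbourhoods in $X$ plus closedness of the projection along a compact factor), but your version is self-contained: it bypasses the quotient machinery and the auxiliary space $X\times Z$, and it isolates precisely which Menger subspaces of $X\times Y$ are needed for the test, namely products of a compact subspace of $X$ with a Menger subspace of $Y$. What the paper's route buys is brevity given the already-established Theorems~\ref{TT1} and \ref{TT2} and consistency with the section's quotient-theoretic viewpoint; what yours buys is independence from those results and a sharper view of where local compactness of $X$ enters.
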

\begin{proof}
Since $Y$ is a MG-space, there exist a locally Menger space $Z$ and a quotient mapping $q:Z\to Y$. Define $f:X\times Z\to X\times Y$ as $f(x,z)=(x,q(z))$.  Proceeding similarly as in the proof of Theorem~\ref{TT2} and using the fact that the projection $p:A\times B\to B$ is closed whenever $A$ is a compact, we can show that $f$ is a quotient mapping. It now follows that $X\times Y$ is a MG-space.
\end{proof}

\subsection{Decomposition of locally Menger spaces}
A collection $\mathcal{D}$ of subsets of a space $X$ is called a decomposition of $X$ if $\mathcal D$ forms a partition of $X$.
The function $\varphi:X\to\mathcal{D}$ defined by $\varphi(x)=D_x$, where $x\in D_x$, is called the decomposition mapping. The topology on $\mathcal{D}$ can be defined as follows. A subset $\mathcal{U}$ of $\mathcal{D}$ is open in $\mathcal{D}$ if and only if $\varphi^{-1}(\mathcal{U})$ is open in $X$. In this case we say that $\mathcal D$ is a decomposition space (or in short, a decomposition) of $X$ \cite{Willard}. The decomposition mapping $\varphi:X\to\mathcal{D}$ is clearly surjective and continuous.
 The natural decomposition in $X$ of a mapping $f:X\to Y$ is the collection of disjoint sets $f^{-1}(y)$, $y\in Y$.
The collection of all MG-spaces is denoted by $\mathfrak{M}$.

\begin{Lemma}
\label{TL2}
Let $\mathcal{D}$ be a decomposition of $X$. If $X$ belongs to the class $\mathfrak{M}$, then $\mathcal{D}$ also belongs to the class $\mathfrak{M}$.
\end{Lemma}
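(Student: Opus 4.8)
My plan is to reduce the entire statement to a single earlier corollary by recognizing the decomposition mapping as a quotient map. First I would recall the definitions set up just before the statement: the decomposition mapping $\varphi:X\to\mathcal{D}$ sends each point to the block of the partition containing it, it is surjective and continuous, and the topology on $\mathcal{D}$ is declared so that a subset $\mathcal{U}\subseteq\mathcal{D}$ is open if and only if $\varphi^{-1}(\mathcal{U})$ is open in $X$. This last clause is precisely the assertion that $\mathcal{D}$ carries the quotient topology induced by $\varphi$; combined with surjectivity and continuity, it shows that $\varphi:X\to\mathcal{D}$ is a quotient mapping.

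Once this identification is in place, the conclusion is immediate. By hypothesis $X\in\mathfrak{M}$, i.e.\ $X$ is a MG-space, and we have just exhibited $\varphi:X\to\mathcal{D}$ as a quotient mapping from $X$ onto $\mathcal{D}$. Corollary~\ref{PP8} states that the image of a MG-space under a quotient mapping is again a MG-space, so applying it to $\varphi$ yields at once that $\mathcal{D}$ is a MG-space, that is, $\mathcal{D}\in\mathfrak{M}$.

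I do not anticipate any genuine obstacle. The only step that requires even a moment's attention is the observation that the decomposition topology coincides with the quotient topology, and this is a direct unwinding of the definitions of $\varphi$ and of the open sets of $\mathcal{D}$ recorded in the preceding paragraph. Everything else is a verbatim appeal to the already-proved preservation of the MG-property under quotient mappings, so the proof will be short.
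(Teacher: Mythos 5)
Your proof is correct, but it follows a different route from the paper's. You reduce everything to Corollary~\ref{PP8}: the decomposition topology on $\mathcal{D}$ is by definition the quotient topology induced by the surjection $\varphi$, so $\varphi:X\to\mathcal{D}$ is a quotient mapping and the conclusion is immediate from the preservation of the MG-property under quotient maps. The paper instead gives a self-contained direct verification: it takes the cover of $\mathcal{D}$ by the Menger sets $\varphi(Y)$ (images of Menger subspaces $Y$ of $X$), assumes $\mathcal{U}\subseteq\mathcal{D}$ meets each such $\varphi(Y)$ in a relatively open set, pulls back along $\varphi|_Y$ to get $\varphi^{-1}(\mathcal{U})\cap Y$ open in $Y$, invokes the MG-property of $X$ to conclude $\varphi^{-1}(\mathcal{U})$ is open, and then reads off openness of $\mathcal{U}$ from the decomposition topology. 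Your argument buys brevity and modularity, and it makes explicit the identification of decompositions with quotients that the paper only exploits later (Theorem~\ref{TL3} and Corollary~\ref{CL1}); since Corollary~\ref{PP8} is established before this lemma, the citation is legitimate and there is no circularity. The paper's argument, on the other hand, is independent of Theorem~\ref{TT1} and its topological-sum construction, on which Corollary~\ref{PP8} rests. One caveat worth noting: the paper's own write-up quantifies only over a cover $\mathcal{S}$ by Menger subspaces, whereas the MG-property of $X$ requires the pulled-back condition for \emph{every} Menger subspace of $X$ (equivalently, one should take $\mathcal{S}$ to be the family of all Menger subspaces); your version avoids this subtlety entirely, since the quotient formulation never mentions covers.
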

\begin{proof}
Let $\mathcal{S}$ be a cover of $X$ consisting of Menger subspaces of $X$ and $\varphi:X\to\mathcal{D}$ be the decomposition mapping. Then $\varphi(\mathcal{S})=\{ \varphi(Y): Y\in\mathcal{S}\}$ is a cover of $\mathcal{D}$ consisting of Menger subspaces.
If $\mathcal{U}\subseteq \mathcal{D}$ is such that $\mathcal{U}\cap\varphi(Y)$ is open in $\varphi(Y)$ for each $Y\in\mathcal{S}$, then it is easy to verify that $\varphi^{-1}(\mathcal{U})\cap Y$ is open in $Y$ for each such $Y$. Accordingly $\varphi^{-1}(\mathcal{U})$ is open in $X$ and $\mathcal{U}$ is open in $\mathcal{D}$ as well.
\end{proof}

\begin{Th}
\label{TL3}
Every member of the class $\mathfrak{M}$ is homeomorphic to a decomposition of some locally Menger space.
\end{Th}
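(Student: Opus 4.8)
The plan is to reduce the statement to Theorem~\ref{TT1}, which already establishes that every MG-space is a quotient of a locally Menger space, and then to recognize that the natural decomposition associated with such a quotient map reproduces the original space up to homeomorphism. So, given $X\in\mathfrak{M}$, I would first invoke Theorem~\ref{TT1} to obtain a locally Menger space $Z$ together with a quotient mapping $q:Z\to X$. I would then pass to the natural decomposition of $q$, namely $\mathcal{D}=\{q^{-1}(x):x\in X\}$, which is a partition of $Z$ and hence a decomposition of the locally Menger space $Z$. Writing $\varphi:Z\to\mathcal{D}$ for the decomposition mapping, so that $\varphi(z)=q^{-1}(q(z))$, and equipping $\mathcal{D}$ with the decomposition topology, the map $\varphi$ is itself a quotient map by the very definition of that topology.

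Next I would introduce the canonical map $h:\mathcal{D}\to X$ given by $h(q^{-1}(x))=x$. Since $q$ is surjective and its fibers partition $Z$, this $h$ is a well-defined bijection satisfying $q=h\circ\varphi$, and the task becomes showing that $h$ is a homeomorphism. For continuity: if $U$ is open in $X$, then $\varphi^{-1}(h^{-1}(U))=(h\circ\varphi)^{-1}(U)=q^{-1}(U)$ is open in $Z$ because $q$ is continuous, so $h^{-1}(U)$ is open in $\mathcal{D}$ by the definition of the decomposition topology. For openness: if $\mathcal{U}$ is open in $\mathcal{D}$, then, using that $h$ is a bijection so $h^{-1}(h(\mathcal{U}))=\mathcal{U}$, we get $q^{-1}(h(\mathcal{U}))=\varphi^{-1}(\mathcal{U})$, which is open in $Z$; since $q$ is a quotient map, $h(\mathcal{U})$ is open in $X$. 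Hence $h$ is a continuous open bijection, i.e.\ a homeomorphism, exhibiting $X$ as homeomorphic to the decomposition $\mathcal{D}$ of the locally Menger space $Z$.

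I do not expect a serious technical obstacle here, since the whole argument is an instance of the universal property of quotient maps; the only point demanding care is the verification that the decomposition topology on $\mathcal{D}$ coincides, through the bijection $h$, with the topology of $X$, and this is exactly what the two displayed computations for continuity and openness confirm. In effect the substantive content of the theorem is already delivered by Theorem~\ref{TT1}, and the present result simply recasts that quotient as a decomposition; accordingly the main effort is bookkeeping rather than genuine difficulty.
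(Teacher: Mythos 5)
Your proof is correct and takes essentially the same approach as the paper: the paper inlines the construction underlying Theorem~\ref{TT1} (the quotient map from a disjoint union of Menger subspaces onto $X$) and then passes to the natural decomposition $\mathcal{D}=\{f^{-1}(x):x\in X\}$, whereas you invoke Theorem~\ref{TT1} as a black box and do the same thing with its quotient map $q$. Your explicit verification that $h:\mathcal{D}\to X$ is a continuous open bijection is exactly the step the paper compresses into ``is clearly a homeomorphism,'' so the two arguments differ only in modularity and level of detail.
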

\begin{proof}
Let $X$ be a MG-space and consider a cover $\{X_\alpha : \alpha\in\Lambda\}$ of $X$. The space $Y=\cup_{\alpha\in\Lambda}Y_\alpha$ is locally Menger, where $Y_\alpha$'s are pairwise disjoint open Menger subspaces of $Y$ such that for each $\alpha$ there is a homeomorphism $h_\alpha:Y_\alpha\to X_\alpha$.
The mapping $f:Y\to X$ given by $f(y)=h_\alpha(y)$ for $y\in Y_\alpha$ is  continuous and surjective. Also if $f^{-1}(V)$ is open in $Y$, then $V$ is also open in $X$ as $X\in\mathfrak{M}$.
Thus $\mathcal{D}=\{f^{-1}(x) : x\in X\}$ is a decomposition of $Y$. The mapping that sends each element $x$ of $X$ to $f^{-1}(x)$ is clearly a homeomorphism.
\end{proof}

\begin{Cor}
\label{CL1}
$\mathfrak{M}$ is identical with the class of all spaces which are obtained as a decomposition of locally Menger spaces.
\end{Cor}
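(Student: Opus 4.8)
The plan is to establish the stated equality of classes by proving two inclusions separately, each of which follows almost immediately from a result already in hand. Write $\mathfrak{D}$ for the class of all spaces obtained as a decomposition of some locally Menger space; the goal is to show $\mathfrak{M}=\mathfrak{D}$. Since membership in $\mathfrak{M}$ is a topological invariant, this equality is to be read up to homeomorphism, which causes no difficulty.

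For the inclusion $\mathfrak{M}\subseteq\mathfrak{D}$, I would simply invoke Theorem~\ref{TL3}, which asserts that every member of $\mathfrak{M}$ is homeomorphic to a decomposition of some locally Menger space. Thus each MG-space belongs (up to homeomorphism) to $\mathfrak{D}$, and this half is complete with essentially no further work.

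For the reverse inclusion $\mathfrak{D}\subseteq\mathfrak{M}$, the key observation is that every locally Menger space is itself a MG-space, by Proposition~\ref{T25}, and hence lies in $\mathfrak{M}$. Now take any $\mathcal{D}\in\mathfrak{D}$, say $\mathcal{D}$ is a decomposition of a locally Menger space $X$. Since $X\in\mathfrak{M}$, Lemma~\ref{TL2} immediately yields that the decomposition $\mathcal{D}$ also belongs to $\mathfrak{M}$. This gives $\mathfrak{D}\subseteq\mathfrak{M}$, and combining the two inclusions produces $\mathfrak{M}=\mathfrak{D}$, as required.

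I do not expect any genuine obstacle here, as the corollary is a direct packaging of Theorem~\ref{TL3} and Lemma~\ref{TL2} together with the basic fact (Proposition~\ref{T25}) that locally Menger spaces are MG-spaces. The only point deserving a word of care is that Theorem~\ref{TL3} delivers a space only up to homeomorphism; since the MG-property is preserved by homeomorphisms, identifying the two classes up to homeomorphism is exactly the intended reading of the statement, and so the argument goes through cleanly.
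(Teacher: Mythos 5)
Your proof is correct and follows exactly the route the paper intends: the corollary is stated immediately after Theorem~\ref{TL3} and Lemma~\ref{TL2} precisely because one inclusion is Theorem~\ref{TL3} and the other is Lemma~\ref{TL2} combined with the fact (Proposition~\ref{T25}) that locally Menger spaces are MG-spaces. Nothing further is needed.
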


 Another characterization of a locally Menger space can be obtained using bi-quotient mappings. Recall that bi-quotient image of a second countable space is again second countable.
\begin{Th}
\label{TD2}
Let $X$ be regular. Then $X$ is both locally Menger and locally metrizable if and only if $X$ is obtained as the image of a locally Menger metrizable space under a bi-quotient mapping.
\end{Th}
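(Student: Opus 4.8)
The plan is to prove the two implications separately: for the ``local properties $\Rightarrow$ bi-quotient image'' direction I build an explicit bi-quotient map out of a topological sum of well-chosen neighbourhoods, while the ``image $\Rightarrow$ local properties'' direction I extract from the preservation result Theorem~\ref{T15} together with the folklore on bi-quotient images of second countable spaces recalled just before the statement.

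For the direction asserting that a regular, locally Menger, locally metrizable $X$ is such an image, I would argue as follows. Since $X$ is regular and locally Menger, Theorem~\ref{T7} supplies a basis of closed Menger neighbourhoods; combining this with local metrizability I fix, for each $x\in X$, an open metrizable neighbourhood $M_x$ and then a closed Menger neighbourhood $F_x$ of $x$ with $F_x\subseteq M_x$. Each $F_x$ is then closed, Menger, and (being a subspace of $M_x$) metrizable, with $x\in\Int F_x$. Set $Z=\oplus_{x\in X}F_x$. A topological sum of metrizable spaces is metrizable, and since every summand is clopen in $Z$ and Menger, $Z$ is locally Menger and metrizable. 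Define $f\colon Z\to X$ to be the inclusion $F_x\hookrightarrow X$ on each summand; it is continuous, and surjective because the interiors $\Int F_x$ already cover $X$. To see that $f$ is bi-quotient, take $y\in X$ and an open cover $\mathcal U$ of $f^{-1}(y)$: the copy of $y$ sitting in the summand $F_y$ lies in some $U_0\in\mathcal U$, and since $f$ restricted to that summand is the subspace inclusion, $f(U_0)$ contains a set of the form $G\cap\Int F_y$ with $G$ open in $X$ and $y\in G$, i.e.\ an open neighbourhood of $y$. Thus a single member already works, so $f$ is bi-quotient. I expect this direction to be entirely routine.

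For the converse, let $f\colon Z\to X$ be bi-quotient with $Z$ locally Menger and metrizable and $X$ regular. That $X$ is locally Menger is immediate from Theorem~\ref{T15}. For local metrizability I first note that $Z$ is locally second countable: being metrizable and locally Menger, by Theorem~\ref{T7} each point has an open neighbourhood with Menger closure, and a Menger metrizable space is Lindel\"of and metrizable, hence second countable. Fixing $x\in X$, I cover the fibre $f^{-1}(x)$ by such second countable open sets $W_z$ and use bi-quotiency to extract finitely many $W_{z_1},\dots,W_{z_k}$ whose images cover some open set $O\ni x$. Writing $S=\cup_{i=1}^{k}W_{z_i}$, the set $S$ is open and second countable with $f(S)\supseteq O$. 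If the restriction $S\cap f^{-1}(O)\to O$ is bi-quotient, then the folklore that a bi-quotient image of a second countable space is second countable shows $O$ is second countable; as an open subspace of the regular space $X$ it is then metrizable by Urysohn's theorem, so $X$ is locally metrizable.

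The crux — and the step I expect to be the main obstacle — is precisely the claim that $S\cap f^{-1}(O)\to O$ is bi-quotient, since bi-quotiency is \emph{not} inherited when one shrinks the domain to a subspace: a cover of the truncated fibre $f^{-1}(y)\cap S$ need not be a cover of the full fibre $f^{-1}(y)$, so bi-quotiency of $f$ cannot be invoked verbatim. The attack I have in mind is, for $y\in O$ and a given open cover of $f^{-1}(y)\cap S$, to lift it to $Z$, adjoin second countable neighbourhoods (inside $f^{-1}(O)$) of the points of $f^{-1}(y)\setminus S$, apply bi-quotiency of $f$ to this cover of the whole fibre to obtain a finite subfamily whose images cover a neighbourhood of $y$, and then argue that the members drawn from outside $S$ may be discarded because every point of $O$ already possesses a preimage in $S$. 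Making this discarding step rigorous is the delicate heart of the matter; equivalently, what must be proved is that a bi-quotient map carries local second countability of the domain to the range. I would isolate this as a separate lemma and prove it by careful bookkeeping of the finite subfamilies produced by bi-quotiency, this being the one place where genuine work, rather than bookkeeping of the earlier results, is required.
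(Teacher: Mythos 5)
Your first implication is correct and is essentially the paper's own argument: the paper likewise covers $X$ by closed Menger metrizable neighbourhoods obtained from Theorem~\ref{T7}, forms their topological sum, and maps it down by the inclusions. Your verification of bi-quotiency (a single summand witnesses it, because $x\in\Int F_x$) is in fact more careful than the paper's, which merely asserts that this map is open (dubious, since the summands are closed sets, but bi-quotiency does hold by exactly your argument).

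For the converse you have isolated precisely the right difficulty, and you should know that the paper does not resolve it. After bi-quotiency produces finitely many $U_{y_i}$ with $U\subseteq\bigcup_{i\le k}g(U_{y_i})$, the paper sets $Z=\bigcup_{i\le k}Z_{y_i}$, notes $Z$ is second countable, and then simply declares that $g(Z)$ is ``a second countable regular Menger space''. Second countability of $g(Z)$ is exactly the unproved point: it would follow from the folklore fact only if the restriction $g|_Z\colon Z\to g(Z)$ were bi-quotient, and $Z$ is not a $g$-saturated set, so this is precisely the obstacle you describe. Thus the paper's proof contains the gap you flagged, unrepaired.

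Unfortunately the gap cannot be closed, because the lemma you propose to prove (bi-quotient maps carry local second countability of the domain to the range) is false, and with it the ``if'' direction of the theorem as stated. Let $W$ be the bow-tie space: the set $\mathbb{R}^2$ in which points off the $x$-axis keep their Euclidean neighbourhoods, while $(x,0)$ has basic neighbourhoods $B(x,\varepsilon,\delta)=\{(x,0)\}\cup\{(u,v):0<|u-x|<\varepsilon,\ |v|<\delta|u-x|\}$; note these contain the interval $(x-\varepsilon,x+\varepsilon)\times\{0\}$. One checks routinely that $W$ is regular, and $W$ is $\sigma$-compact (the axis is a copy of the Euclidean line, and off the axis the topology is Euclidean), hence locally Menger. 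But $W$ is not locally metrizable: every neighbourhood of an axis point contains some $B(x,\varepsilon,\delta)$, which is separable yet not second countable --- if $\mathcal C$ were a countable base, uncountably many apexes $(u,0)$ would share a single $C^*\in\mathcal C$ with $(u,0)\in C^*\subseteq B(u,1,1)$; taking a condensation point $u^*$ of these $u$ and another such $u'$ very near it, $C^*$ contains a full bowtie at $u^*$ and hence points $(s,t)$ with $|t|\ge|s-u'|$, contradicting $C^*\subseteq B(u',1,1)$.

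Nevertheless $W$ \emph{is} a bi-quotient image of a locally Menger metrizable space. Each $B(x,1/n,1/n)$ is a countable union of compact metrizable subspaces of $W$ (closed subintervals of the axis, Euclidean-compact off-axis pieces); let $M_n^x$ be their disjoint sum with its canonical continuous surjection onto $B(x,1/n,1/n)$. Let $V_x=\{z_x\}\cup\bigoplus_n M_n^x$, topologized so that each $M_n^x$ is clopen and the tails $\{z_x\}\cup\bigoplus_{n\ge N}M_n^x$ form a neighbourhood base at $z_x$; then $V_x$ is metrizable and $\sigma$-compact. Put $Z=Z_0\oplus\bigoplus_{x\in\mathbb{R}}V_x$, where $Z_0=\mathbb{R}^2\setminus(\mathbb{R}\times\{0\})$ is Euclidean, and let $h\colon Z\to W$ act as the inclusion on $Z_0$ and on every compact piece, with $h(z_x)=(x,0)$. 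Then $Z$ is metrizable, locally $\sigma$-compact, hence locally Menger and locally second countable; $h$ is continuous (the image of the $N$-th tail is exactly $B(x,1/N,1/N)$) and onto; and $h$ is bi-quotient: at an off-axis point the cover member containing the $Z_0$-copy has image containing a Euclidean-open set, and at an apex the cover member containing $z_x$ contains a tail, whose image $B(x,1/N,1/N)$ is already an open neighbourhood of $(x,0)$. So $W$ satisfies the right-hand side of Theorem~\ref{TD2} but not the left: no discarding argument, however careful, can succeed. (The discarding does work when the map is open or perfect, where one can shrink to saturated sets; it is the passage to general bi-quotient maps that is irreparable.)
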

\begin{proof}
Let $X$ be locally Menger and locally metrizable. By Theorem~\ref{T7}, $X$ has a basis consisting of closed Menger neighbourhoods. Thus there is a cover $\{X_\alpha : \alpha\in\Lambda\}$ of $X$ such that $X_\alpha$ is a closed metrizable Menger subspace of $X$ for each $\alpha$. In line of the proof of Theorem~\ref{TL3}, we define a space $Y$ as follows.
\begin{enumerate}
  \item $Y=\cup_{\alpha\in\Lambda}Y_\alpha$, where each $Y_\alpha$ is Menger and metrizable.
  \item  $Y_\alpha$ is open in $Y$ for each $\alpha$ and $Y_\alpha\cap Y_\beta=\emptyset$ for $\alpha\neq\beta$.
  \item For each $\alpha$ there is a homeomorphism $h_\alpha:Y_\alpha\to X_\alpha$.
\end{enumerate}
Such $Y$ is  metrizable and also locally Menger. The mapping $f:Y\to X$ defined by $f(y)=h_\alpha(y)$ for $y\in Y_\alpha$ is open, continuous and surjective. Thus $f$ is a  bi-quotient mapping.

Conversely assume that $g:Y\to X$ is a bi-quotient mapping from a locally Menger metrizable space $Y$ onto the regular space $X$. By Theorem~\ref{T15}, $X$ is locally Menger. It now remains to show that $X$ is locally metrizable. Choose an open cover $\mathcal U=\{U_y : y\in Y\}$ of $Y$ such that $y\in U_y\subseteq Z_y$, where $Z_y$ is  Menger for each $y$. Let $x\in X$ and choose  $y\in g^{-1}(x)$.
 By definition of $g$, there exist a finite subset $\{U_{y_i} : 1\leq i\leq k\}$ of $\mathcal U$ and an open set $U$ in $X$ containing $x$ such that $U\subseteq\cup_{i=1}^k g(U_{y_i})$. Now $Z=\cup_{i=1}^k Z_{y_i}$ is a metrizable Menger subspace of $Y$ and hence $Z$ is second countable. Clearly $g(Z)$ is a metrizable subspace of $X$, as $g(Z)$ is a second countable regular  Menger space. Thus $U$ is the metrizable subspace of $X$, as required.
\end{proof}

\begin{Cor}
\label{CD1}
A regular space $X$ is obtained as the image of a locally Menger metrizable space under an open continuous mapping if and only if $X$ is locally Menger and locally metrizable.
\end{Cor}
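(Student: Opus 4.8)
The plan is to obtain this corollary directly from Theorem~\ref{TD2}, exploiting the observation recorded in Section~4 that every surjective continuous open mapping is bi-quotient. In other words, an open continuous surjection is a special (stronger) instance of a bi-quotient mapping, so both halves of the equivalence can be extracted from the corresponding halves of Theorem~\ref{TD2} essentially for free.

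For the forward implication I would argue as follows. Suppose $X$ is the image of a locally Menger metrizable space $Y$ under an open continuous (surjective) mapping $f:Y\to X$. Since $f$ is a surjective continuous open mapping, it is bi-quotient. Consequently $X$ is the bi-quotient image of a locally Menger metrizable space, and the reverse direction of Theorem~\ref{TD2} applies verbatim to conclude that $X$ is locally Menger and locally metrizable.

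For the reverse implication I would inspect the construction in the forward direction of the proof of Theorem~\ref{TD2}. There one starts from a regular, locally Menger, locally metrizable space $X$, takes a cover $\{X_\alpha:\alpha\in\Lambda\}$ by closed metrizable Menger subspaces (available via Theorem~\ref{T7}), forms the locally Menger metrizable space $Y=\cup_{\alpha\in\Lambda}Y_\alpha$ with $Y_\alpha$ open, pairwise disjoint, and homeomorphic to $X_\alpha$ via $h_\alpha$, and defines $f:Y\to X$ by $f(y)=h_\alpha(y)$ for $y\in Y_\alpha$. The crucial point is that this $f$ was already shown there to be open, continuous, and surjective; the subsequent remark that $f$ is bi-quotient is merely a weakening. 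Hence the very same construction exhibits $X$ as the image of a locally Menger metrizable space under an open continuous mapping.

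I expect no genuine obstacle here, as all the substantive work has been carried out in Theorem~\ref{TD2}. The only thing to verify is the logical compatibility: the implication (open continuous surjection)\,$\Rightarrow$\,(bi-quotient) lets the hard reverse half of Theorem~\ref{TD2} be reused unchanged, while the map produced in the forward half of Theorem~\ref{TD2} is already open, so no strengthening of that construction is needed. Thus the statement follows immediately.
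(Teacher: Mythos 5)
Your proposal is correct and coincides with the paper's own (implicit) derivation: the corollary is stated without proof precisely because, as you observe, an open continuous surjection is bi-quotient (giving the forward direction from Theorem~\ref{TD2}), while the map $f$ constructed in the proof of Theorem~\ref{TD2} is already open, continuous and surjective (giving the converse). Nothing further is needed.
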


\begin{Th}
\label{T2}
A Hausdorff $P$-space $X$ is obtained as the image of a locally Menger metrizable space under a bi-quotient mapping if and only if $X$ is locally Menger and locally metrizable.
\end{Th}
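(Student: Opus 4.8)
The plan is to imitate the proof of Theorem~\ref{TD2}, keeping the same skeleton, but to replace the one place where regularity was used---the passage from ``second countable regular Menger'' to ``metrizable''---by an argument that exploits the $P$-space hypothesis. The organizing observation is that \emph{for a Hausdorff $P$-space local metrizability collapses to discreteness}: if $M$ is metrizable and a $P$-space, then for each $x$ the singleton $\{x\}=\bigcap_{n}B(x,1/n)$ is a countable intersection of open sets, hence open, so $M$ is discrete. Since every subspace of a $P$-space is again a $P$-space, a locally metrizable Hausdorff $P$-space has about each point an open metrizable (hence discrete) neighbourhood, forcing every point to be isolated; thus \emph{a locally metrizable Hausdorff $P$-space is discrete}, and conversely a discrete space is metrizable and locally Menger. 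This reduction will drive both directions.

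For the direction ``$X$ locally Menger and locally metrizable $\Rightarrow$ $X$ is a bi-quotient image'': by the reduction $X$ is discrete, so $X$ itself is a locally Menger (indeed locally compact) metrizable space and the identity $\mathrm{id}_X$ is an open continuous surjection, hence bi-quotient. If one prefers to keep the format of Theorem~\ref{TL3}, cover $X$ by its closed metrizable Menger---necessarily discrete---pieces $\{X_\alpha\}$, form $Y=\oplus_{\alpha\in\Lambda}Y_\alpha$ with $Y_\alpha\cong X_\alpha$, note $Y$ is metrizable and locally Menger, and check that $f\colon Y\to X$ with $f|_{Y_\alpha}=h_\alpha$ is open continuous, hence bi-quotient.

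For the converse, suppose $g\colon Y\to X$ is bi-quotient with $Y$ locally Menger and metrizable. That $X$ is locally Menger is immediate from Theorem~\ref{T15}. For local metrizability I would reproduce the first half of the converse of Theorem~\ref{TD2}: choose an open cover $\{U_y:y\in Y\}$ with $y\in U_y\subseteq Z_y$ and $Z_y$ Menger; fixing $x\in X$ and applying the bi-quotient property to the cover $\{U_y:y\in g^{-1}(x)\}$ of $g^{-1}(x)$, obtain finitely many indices $y_1,\dots,y_k$ and an open set $U\ni x$ with $U\subseteq\bigcup_{i=1}^{k}g(U_{y_i})\subseteq g(Z)$, where $Z=\bigcup_{i=1}^{k}Z_{y_i}$. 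Since $Z$ is a subspace of the metrizable $Y$ it is metrizable, and being a finite union of Menger sets it is Menger, hence Lindel\"of and therefore second countable. Consequently $g(Z)$, as a continuous image of a second countable space, carries a countable network and so is hereditarily Lindel\"of.

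The crux---and the step that genuinely uses ``$P$-space'' in place of ``regular''---is the metrization of $U$. Here I would argue that $g(Z)$, being a hereditarily Lindel\"of subspace of the Hausdorff $P$-space $X$, has \emph{every} subset Lindel\"of, and since Lindel\"of subspaces of a Hausdorff $P$-space are closed, every subset of $g(Z)$ is closed, whence clopen; thus $g(Z)$ is discrete. Then $U\subseteq g(Z)$ is a discrete, hence metrizable, open neighbourhood of $x$, so $X$ is locally metrizable. I expect this metrization step to be the main obstacle, precisely because the regular-space route (second countable $+$ regular $\Rightarrow$ metrizable via Urysohn) is unavailable; the resolution is the chain countable network $\Rightarrow$ hereditarily Lindel\"of $\Rightarrow$ (all subspaces closed) $\Rightarrow$ discrete, which is valid exactly for Hausdorff $P$-spaces.
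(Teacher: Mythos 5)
Your proof is correct, but it takes a genuinely different route from the paper's. The paper gives no separate proof of Theorem~\ref{T2}: it is meant to be read as the $P$-space analogue of Theorem~\ref{TD2}, proved by the same two-part skeleton --- for the forward direction, use Theorem~\ref{T71} to produce a cover of $X$ by closed metrizable Menger neighbourhoods and map the topological sum $\oplus_{\alpha}Y_\alpha$ onto $X$ as in Theorem~\ref{TL3}; for the converse, use Theorem~\ref{T15} together with the finite-union-of-Menger-sets argument, with the Urysohn step (``second countable regular Menger $\Rightarrow$ metrizable'') replaced by an appeal to the $P$-space hypothesis. Your organizing observation --- a metrizable $P$-space is discrete, since $\{x\}=\bigcap_{n}B(x,1/n)$ is open, so a locally metrizable Hausdorff $P$-space is discrete --- does not appear in the paper, and it changes the character of the argument: the forward direction becomes trivial (the identity on the discrete space $X$ is an open continuous surjection, hence bi-quotient), and in the converse the metrization of $U\subseteq g(Z)$ comes from the chain countable network $\Rightarrow$ hereditarily Lindel\"{o}f $\Rightarrow$ every subset Lindel\"{o}f, hence closed in the Hausdorff $P$-space $X$ $\Rightarrow$ $g(Z)$ discrete; every step here is valid. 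Your route buys two things the paper's does not: it exposes the sharper structural fact that for Hausdorff $P$-spaces both sides of the equivalence are simply equivalent to ``$X$ is discrete'' (so the hypothesis ``locally Menger'' on the right-hand side is redundant in this setting), and, by using countable networks rather than the paper's recalled fact that bi-quotient images of second countable spaces are second countable, it sidesteps the delicate point that the restriction $g|_Z\colon Z\to g(Z)$ need not itself be bi-quotient. What the paper's route buys is uniformity: the same proof text serves Theorems~\ref{TD2} and~\ref{T2} with only the metrization step changed, without requiring one to notice the collapse to discreteness.
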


\begin{Cor}
\label{C1}
A Hausdorff $P$-space $X$ is obtained as the image of a locally Menger metrizable space under an open continuous mapping if and only if $X$ is locally Menger and locally metrizable.
\end{Cor}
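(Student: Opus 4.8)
The plan is to mirror the proof of Theorem~\ref{TD2}, replacing every appeal to regularity by its Hausdorff $P$-space counterpart. For the forward implication, suppose $X$ is a locally Menger and locally metrizable Hausdorff $P$-space. First I would manufacture a cover of $X$ by closed metrizable Menger subspaces: given $x\in X$, local metrizability provides a metrizable open neighbourhood $M$ of $x$, and Theorem~\ref{T71} lets me pick, inside $M$, an open $U$ with $x\in U\subseteq\overline{U}\subseteq M$ and $\overline{U}$ Menger; since $\overline{U}\subseteq M$, the set $\overline{U}$ is also metrizable. Writing the resulting cover as $\{X_\alpha : \alpha\in\Lambda\}$, I would then build, exactly as in Theorems~\ref{TL3} and~\ref{TD2}, a space $Y=\cup_{\alpha\in\Lambda}Y_\alpha$ with pairwise disjoint open Menger metrizable pieces $Y_\alpha$ and homeomorphisms $h_\alpha:Y_\alpha\to X_\alpha$. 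Such $Y$ is metrizable, being a topological sum of metrizable spaces, and locally Menger, and the map $f:Y\to X$ given by $f(y)=h_\alpha(y)$ for $y\in Y_\alpha$ is open, continuous and surjective, hence bi-quotient. This yields the desired representation.

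For the converse, let $g:Y\to X$ be bi-quotient with $Y$ locally Menger and metrizable and $X$ a Hausdorff $P$-space. Local Mengerness of $X$ is immediate from Theorem~\ref{T15}, so the entire content is to prove that $X$ is locally metrizable. Fixing $x\in X$, I would reproduce the bi-quotient step of Theorem~\ref{TD2}: choosing a cover $\{U_y : y\in Y\}$ of $Y$ with $y\in U_y\subseteq Z_y$ and each $Z_y$ Menger, the definition of bi-quotient supplies finitely many indices $y_1,\dots,y_k$ and an open set $U$ with $x\in U\subseteq\cup_{i=1}^k g(U_{y_i})\subseteq g(Z)$, where $Z=\cup_{i=1}^k Z_{y_i}$. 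As a subspace of the metrizable space $Y$, $Z$ is metrizable and Menger, hence Lindel\"{o}f, and therefore second countable.

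The crucial step, and precisely the place where regularity must be bypassed, is the passage from $Z$ to the metrizability of $U$. Here the plan is to exploit the $P$-space hypothesis in place of Urysohn's metrization theorem. Since $Z$ is second countable, the images of a countable base of $Z$ form a countable network for the continuous image $g(Z)$, so $g(Z)$ possesses a countable dense subset $D$; but $g(Z)$ is a subspace of the Hausdorff $P$-space $X$, hence itself a Hausdorff $P$-space, in which $D$ is a countable union of closed singletons and therefore closed, so density forces $g(Z)=D$ and $g(Z)$ is countable. Finally, in a countable Hausdorff $P$-space each singleton $\{w\}=\cap_{v\neq w}(g(Z)\setminus\{v\})$ is a countable intersection of open sets, hence open, so $g(Z)$ is discrete; consequently its subset $U$ is discrete and thus metrizable, establishing that $X$ is locally metrizable. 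I expect this chain---countable network $\Rightarrow$ countable $\Rightarrow$ discrete---to be the main obstacle, since it is exactly what must substitute for the Urysohn argument available in the regular setting, while everything else transcribes verbatim from Theorem~\ref{TD2}. Corollary~\ref{C1} would then follow at once, an open continuous surjection being bi-quotient.
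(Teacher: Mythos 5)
Your converse direction is correct, and its key step --- replacing the Urysohn metrization argument of Theorem~\ref{TD2} by the chain ``countable network $\Rightarrow$ countable dense subset $\Rightarrow$ that subset is closed, so $g(Z)$ is countable $\Rightarrow$ $g(Z)$ is discrete'' --- is exactly the substitute that the Hausdorff $P$-space hypothesis calls for; since the paper states Corollary~\ref{C1} (via Theorem~\ref{T2}) without any proof, this is genuine added content rather than a transcription. But you stop one sentence short of what your argument actually proves: the open set $U$ you produce is discrete as a subspace of $X$, so every point of $U$ is isolated in $X$, and hence $X$ itself is discrete. Indeed \emph{any} locally metrizable $P$-space is discrete, with no map in sight: inside a metrizable open neighbourhood $M$ of $x$, the singleton $\{x\}$ is the intersection of countably many open balls, hence open in $M$, hence open in $X$. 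So both sides of Corollary~\ref{C1} simply characterize discrete spaces, and the forward implication needs none of the machinery of Theorems~\ref{TL3} and~\ref{TD2}: a discrete space is metrizable and locally Menger, and the identity map on it is open, continuous and surjective.

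This observation is not merely cosmetic, because as written your forward direction has a gap at precisely the clause you transcribed verbatim from Theorem~\ref{TD2}: the assertion that $f:Y\to X$, $f(y)=h_\alpha(y)$, is \emph{open}. The pieces $X_\alpha=\overline{U}_\alpha$ are closed neighbourhoods, so the image of an open subset of $Y_\alpha$ is only relatively open in $X_\alpha$, not open in $X$; for instance with $X=\mathbb{R}$ and $X_\alpha=[0,1]$, the open set $h_\alpha^{-1}\bigl((1/2,1]\bigr)$ of $Y_\alpha$ has image $(1/2,1]$, which is not open in $\mathbb{R}$. That construction yields a bi-quotient map (the interiors of the $X_\alpha$ cover $X$), which suffices for Theorem~\ref{TD2}, but not an open map, which is what Corollary~\ref{C1} demands. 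In your setting the openness claim happens to be true, but only because $X$ is discrete --- the very fact your write-up never invokes. In short: your converse is sound and is the real content; your forward direction is repaired, and indeed trivialized, once you notice the discreteness that your own argument establishes.
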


\section{Remainders and associated observations}
Throughout the section all spaces are assumed to be Tychonoff. By a remainder of a space $X$ we mean the subspace $bX\setminus X$ of a compactification $bX$ of $X$. A space $X$ is said to be a $p$-space \cite{p-space,EGT} if in any (in some) compactification $bX$ of $X$ there exists a countable family $\{\mathcal{U}_n : n\in\mathbb{N}\}$ with for each $n$ $\mathcal{U}_n$ is a collection of open sets in $bX$ such that for each $x\in X$, $x\in\cap_{n\in\mathbb{N}}\cup\{U\in\mathcal{U}_n : x\in U\}\subseteq X$. It is known that every metrizable space is a $p$-space \cite{p-space1,AVR}. Also observe that the property of being a $p$-space is hereditary for closed subspaces. In \cite{Nagami}, continuous images of Lindel\"{o}f $p$-spaces are called as Lindel\"{o}f $\Sigma$-spaces.
 A space $X$ is said to be an $s$-space if there exists a countable open source for $X$ in any (in some) compactification $bX$ of $X$ \cite{s-space,s-space1}. It is well known that every Lindel\"{o}f $p$-space is an $s$-space \cite{s-space1}. Also if $X$ is a Lindel\"{o}f $p$-space, then any remainder of $X$ is also a Lindel\"{o}f $p$-space \cite[Theorem 2.1]{AVR}.

This section is devoted to study remainders of locally Menger spaces.
For a subspace $Y$ of a space $X$, we say that $X$ has the property $\mathcal{P}$ outside of $Y$ if each closed set $F$ in $X$ with $Y\cap F=\emptyset$ has the property $\mathcal{P}$.

\begin{Th}
\label{TR3}
Let $Y$ be a remainder of a regular locally Menger $p$-space $X$. Then there is a compact subspace $K$ of $Y$ such that $Y$ is a Lindel\"{o}f $p$-space outside of $K$ (and hence $Y$ is an $s$-space outside of $K$).
\end{Th}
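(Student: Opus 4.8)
The plan is to work inside a fixed compactification $bX$ with $Y=bX\setminus X$, and to produce $K$ as the portion of the remainder not covered by the ``locally Lindel\"of'' open sets supplied by local Mengerness. First I would invoke Theorem~\ref{T7}: since $X$ is regular and locally Menger, each $x\in X$ has an open neighbourhood $U_x$ in $X$ with $\overline{U_x}^X$ Menger, hence Lindel\"of; being closed in the $p$-space $X$, this closure is moreover a Lindel\"of $p$-space. Choosing for each $x$ an open set $O_x$ in $bX$ with $O_x\cap X=U_x$, the union $G=\bigcup_{x\in X}O_x$ is open in $bX$ and contains $X$, so $K:=bX\setminus G$ is closed in the compact space $bX$ and is contained in $Y$. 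Thus $K$ is a compact subspace of $Y$, and it will be the required set.

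Next I would verify the local property. Fix a closed set $F$ of $Y$ with $F\cap K=\emptyset$. Since $F$ is closed in $Y$ we have $\overline{F}^{bX}\cap Y=F$, whence $\overline{F}^{bX}\cap K=F\cap K=\emptyset$, so the compact set $\overline{F}^{bX}$ is covered by $G=\bigcup_x O_x$. Compactness yields finitely many indices with $\overline{F}^{bX}\subseteq O:=\bigcup_{i=1}^{n}O_{x_i}$. The decisive step is to set $L:=\overline{O\cap X}^X=\overline{\bigcup_{i=1}^n U_{x_i}}^X$ and to observe that $L\subseteq\bigcup_{i=1}^n\overline{U_{x_i}}^X$ is a closed subset of a finite union of Lindel\"of sets, hence Lindel\"of, while being closed in $X$ makes it a $p$-space; so $L$ is a \emph{single} Lindel\"of $p$-space. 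Because $O\cap X$ is dense in $O$, we get $\overline{O}^{bX}=\overline{O\cap X}^{bX}=\overline{L}^{bX}$ and $\overline{O}^{bX}\cap X=\overline{O\cap X}^X=L$; thus $\overline{O}^{bX}$ is a compactification of $L$ with remainder $R:=\overline{O}^{bX}\setminus L=\overline{O}^{bX}\cap Y$.

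By \cite[Theorem 2.1]{AVR}, the remainder $R$ of the Lindel\"of $p$-space $L$ is again a Lindel\"of $p$-space. Finally $F\subseteq\overline{F}^{bX}\subseteq\overline{O}^{bX}$ together with $F\subseteq Y$ gives $F\subseteq R$, and from $\overline{F}^{bX}\cap R=\overline{F}^{bX}\cap Y=F$ we see that $F$ is closed in $R$; hence $F$, as a closed subspace of the Lindel\"of $p$-space $R$, is itself a Lindel\"of $p$-space. Since every Lindel\"of $p$-space is an $s$-space, the parenthetical assertion follows at once, completing the argument.

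I expect the main obstacle to be the bookkeeping of closures, namely distinguishing closures taken in $bX$, in $X$, and in the subspace $Y$; in particular the two identities $\overline{O}^{bX}\cap X=\overline{O\cap X}^X$ and $\overline{F}^{bX}\cap Y=F$ are precisely what let me locate $F$ inside the remainder $R$ as a closed subspace. The conceptually important point, which sidesteps a genuinely delicate issue, is to assemble the single Lindel\"of $p$-space $L$ as the closure in $X$ of the finite union $O\cap X$: its $p$-space character comes for free from being closed in the $p$-space $X$, so I never need to prove that a finite union of Lindel\"of $p$-spaces is a Lindel\"of $p$-space, and the cited remainder theorem applies to $L$ directly.
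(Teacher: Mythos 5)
Your proposal is correct and follows essentially the same route as the paper: the same construction of $K$ from open extensions $O_x$ of neighbourhoods with Menger closures, the same finite-subcover step applied to $\overline{F}^{bX}$, the same appeal to \cite[Theorem 2.1]{AVR}, and the same conclusion via closed-heredity of the Lindel\"of $p$-property. Your ``single Lindel\"of $p$-space'' $L=\overline{\bigcup_{i=1}^n U_{x_i}}^X$ is in fact literally the paper's set $C=\bigcup_{i=1}^n\overline{U_{x_i}}^X$ (closure is finitely additive), so the sidestep you describe is also exactly what the paper does.
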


\begin{proof}
Let $Y=bX\setminus X$, where $bX$ is a compactification of $X$. Choose an open cover $\mathcal{U}$ of $X$ such that $\overline{U}^X$ is Menger for each $U\in\mathcal{U}$. For each $U\in\mathcal{U}$ choose an open set $V_U$ in $bX$ such that $V_U\cap X=U$. Consider $W=\cup\{V_U : U\in\mathcal{U}\}$ and $K=bX\setminus W$. Now $K$ is compact and $W$ is open in $bX$ with $X\subseteq W$ and $K\subseteq Y$. It remains to show that $Y$ is a Lindel\"{o}f $p$-space outside of $K$. Let $F$ be a closed set in $Y$ such that $K\cap F=\emptyset$. Then $\overline{F}^{bX}\subseteq W$ as $K\cap\overline{F}^{bX}=\emptyset$. Since $\{V_U : U\in\mathcal{U}\}$ is a cover of $\overline{F}^{bX}$ by open sets in $bX$ and $\overline{F}^{bX}$ is  compact, there is a finite set $\{V_{U_i} : 1\leq i\leq k\}$ such that $\overline{F}^{bX}\subseteq\cup_{i=1}^k V_{U_i}$. Now $C=\cup_{i=1}^k\overline{U}_i^X$ is a closed Menger subspace of $X$ and hence $C$ is a $p$-space. Also $Z=\overline{C}^{bX}$ is compact and $C$ is dense in $Z$. Clearly $Z$ is a compactification of $C$ and $Z\cap Y$ is the remainder of $C$ in $Z$. Thus $Z\cap Y$ is a Lindel\"{o}f $p$-space as it is the remainder of a Lindel\"of $p$-space. Since $\cup_{i=1}^k V_{U_i}\cap\overline{X}^{bX}\subseteq\overline{\cup_{i=1}^k V_{U_i}\cap X}^{bX}$, we have $\cup_{i=1}^k V_{U_i}\subseteq\overline{\cup_{i=1}^k U_i}^{bX}\subseteq\overline{C}^{bX}=Z$ and hence $\overline{F}^{bX}\subseteq Z$. Also since $F=\overline{F}^Y=\overline{F}^{bX}\cap Y$, we have $F\subseteq Z\cap Y$. Evidently $F$ is closed in $Z\cap Y$ and hence $F$ is a Lindel\"{o}f $p$-space. This completes the proof.
\end{proof}

\begin{Cor}
\label{CR1}
Let $X$ be a locally Menger Hausdorff $P$-space. If $X$ is a $p$-space and $Y$ is a remainder of $X$, then there exists a compact subspace $K$ of $Y$ such that $Y$ is a Lindel\"{o}f $p$-space outside of $K$ (and hence $Y$ is an $s$-space outside of $K$).
\end{Cor}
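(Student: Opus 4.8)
The plan is to derive this statement as an immediate specialization of Theorem~\ref{TR3}; the only real work is to check that the hypotheses of the corollary are already subsumed by those of the theorem. Recall that Theorem~\ref{TR3} requires the ground space to be regular, locally Menger, and a $p$-space, and then produces precisely a compact set $K\subseteq Y$ outside of which $Y$ is a Lindel\"{o}f $p$-space. So the whole argument reduces to verifying that $X$ meets those three conditions and then quoting the theorem.

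First I would invoke the blanket convention of this section that all spaces are assumed to be Tychonoff. Thus $X$ is Tychonoff, and in particular $X$ is regular. Note that the capital-$P$ hypothesis (countable intersections of open sets are open) is not actually needed to drive the reduction: together with Hausdorffness and the Tychonoff convention it merely places $X$ inside the class that Theorem~\ref{TR3} already covers. Combining regularity with the remaining given hypotheses, $X$ is a regular locally Menger $p$-space, and $Y=bX\setminus X$ is a remainder of $X$ in some compactification $bX$.

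Next I would apply Theorem~\ref{TR3} verbatim to obtain a compact subspace $K$ of $Y$ such that every closed set $F$ in $Y$ disjoint from $K$ is a Lindel\"{o}f $p$-space; that is, $Y$ is a Lindel\"{o}f $p$-space outside of $K$. The parenthetical claim then follows from the fact recorded in the preliminaries that every Lindel\"{o}f $p$-space is an $s$-space (see \cite{s-space1}): each such closed set $F$ is therefore also an $s$-space, whence $Y$ is an $s$-space outside of $K$.

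Since the argument is a direct specialization, I do not expect a genuine obstacle. The single point that requires any care is the observation that the Tychonoff convention of the section upgrades the stated Hausdorff $P$-space assumption to full regularity, which is exactly the separation axiom that Theorem~\ref{TR3} demands; everything else is a transcription of that theorem's conclusion.
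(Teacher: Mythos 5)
Your proposal is correct and matches the paper's own (implicit) derivation: Corollary~\ref{CR1} is stated without proof as an immediate specialization of Theorem~\ref{TR3}, with regularity supplied by the section-wide Tychonoff convention, exactly as you argue. One small refinement: your remark that the $P$-space hypothesis does no work is only true under that convention --- without it, Hausdorff plus $P$-space plus locally Menger still yields regularity, since Theorem~\ref{T71}(4) gives a basis of closed (Menger) neighbourhoods, so the corollary would retain content even for non-Tychonoff spaces.
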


Recall that a space $X$ is said to be homogeneous if for any $x,y\in X$ there exists a homeomorphism $f:X\to X$ such that $f(x)=y$.

It is known that if a space $X$ is the union of a finite collection of its closed $s$-subspaces, then $X$ is an $s$-space (see \cite{Wang}). This observation will be used in the next result.
\begin{Th}
\label{TR4}
If a locally Menger $p$-space $X$ has a homogeneous remainder $Y$, then $Y$ is an $s$-space.
\end{Th}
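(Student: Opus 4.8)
The plan is to feed the conclusion of Theorem~\ref{TR3} into the finite-additivity property of $s$-spaces recorded just above, using homogeneity as the bridge. Since $X$ is Tychonoff (hence regular) by the standing assumption of this section, Theorem~\ref{TR3} applies and yields a compact subspace $K$ of $Y$ such that every closed set $F$ in $Y$ with $F\cap K=\emptyset$ is a Lindel\"of $p$-space, and therefore an $s$-space. First I would dispose of the two degenerate cases. If $K=\emptyset$, then $Y$ itself is closed in $Y$ and disjoint from $K$, so $Y$ is already an $s$-space. If $K=Y$, then $Y$ is compact, hence a Lindel\"of $p$-space, hence an $s$-space. So from now on I may assume $\emptyset\neq K\subsetneq Y$.

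The central idea is to use homogeneity to slide $K$ off itself. Fix $z\in Y\setminus K$, which exists because $K\neq Y$. Given an arbitrary $y\in Y$, homogeneity supplies a homeomorphism $h\colon Y\to Y$ with $h(z)=y$; since $z\notin K$ we get $y=h(z)\in Y\setminus h(K)$, that is, $y\notin h(K)$. Hence $\bigcap\{h(K):h\text{ a homeomorphism of }Y\}=\emptyset$. Each $h(K)$ is compact and therefore closed in the Hausdorff space $Y$, so the closed subsets $K\cap h(K)$ of the compact space $K$ have empty total intersection. By compactness of $K$ there are finitely many homeomorphisms $h_1,\dots,h_n$ with $K\cap h_1(K)\cap\cdots\cap h_n(K)=\emptyset$; putting $h_0=\mathrm{id}$ and $C_i=h_i(K)$ we obtain compact sets $C_0,C_1,\dots,C_n$ with $\bigcap_{i=0}^{n}C_i=\emptyset$.

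Next, since $Y$ is Hausdorff, finitely many compact sets with empty intersection can be enclosed in open sets with empty intersection: there are open $U_i\supseteq C_i$ with $\bigcap_{i=0}^{n}U_i=\emptyset$ (this is the standard separation statement, proved by induction from the two-set case $C\cap C'=\emptyset$ together with the observation that for compact $C,C'$ and open $V\supseteq C\cap C'$ one may arrange $U\supseteq C$, $U'\supseteq C'$ with $U\cap U'\subseteq V$). Set $F_i=Y\setminus U_i$. Then each $F_i$ is closed, $F_i\cap C_i=\emptyset$, and $\bigcup_{i=0}^{n}F_i=Y\setminus\bigcap_{i=0}^{n}U_i=Y$. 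To see that each $F_i$ is an $s$-space, apply $h_i^{-1}$: the set $h_i^{-1}(F_i)$ is closed in $Y$ and $h_i^{-1}(F_i)\cap K=h_i^{-1}(F_i\cap C_i)=\emptyset$, so by Theorem~\ref{TR3} it is a Lindel\"of $p$-space and hence an $s$-space; as $F_i$ is homeomorphic to $h_i^{-1}(F_i)$ and the $s$-space property is topological, $F_i$ is an $s$-space. Thus $Y=\bigcup_{i=0}^{n}F_i$ is the union of a finite family of closed $s$-subspaces, and by the result of \cite{Wang} quoted before the statement, $Y$ is an $s$-space.

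The main obstacle is precisely the passage from ``$s$-space outside a compact set'' to ``$s$-space'', and the device that unlocks it is the homogeneity-driven production of finitely many homeomorphic copies of $K$ with empty intersection. Once these copies are in hand, the remaining ingredients are routine: the compactness argument enclosing them in open sets with empty intersection, and the closed-hereditary, finitely-additive behaviour of the $s$-space property. The two points requiring care are that the enclosing open sets are chosen so that their complements genuinely cover $Y$, and that the $s$-space property is transported across each homeomorphism $h_i$ rather than read off directly from Theorem~\ref{TR3}.
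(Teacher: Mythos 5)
Your proof is correct, but it implements the key step by a genuinely different mechanism than the paper, even though both share the same skeleton: Theorem~\ref{TR3} supplies the compact set $K$, homogeneity plus compactness of $K$ yields a finite cover of $Y$ by closed $s$-subspaces, and the finite-additivity result from \cite{Wang} concludes. The paper spends homogeneity on neighbourhoods: it fixes $y\in Y\setminus K$, chooses an open $U_y$ with $\overline{U_y}^Y\subseteq Y\setminus K$, so that $\overline{U_y}^Y$ is a Lindel\"{o}f $p$-space by Theorem~\ref{TR3}, and pushes this forward by a homeomorphism sending $y$ to an arbitrary $x\in Y$; every point of $Y$ then has a closed Lindel\"{o}f-$p$ neighbourhood, finitely many of the corresponding open sets $W_1,\dots,W_k$ cover $K$, and the decomposition is $Y=\overline{W_1}^Y\cup\dots\cup\overline{W_k}^Y\cup(Y\setminus\bigcup_{i=1}^{k}W_i)$, the last piece being closed and disjoint from $K$. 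You spend homogeneity on $K$ itself: sliding $K$ off each point, extracting via the finite intersection property finitely many homeomorphic copies $C_0,\dots,C_n$ of $K$ with empty intersection, separating them by open sets $U_i$ with empty intersection (your inductive separation lemma is standard and correct), and covering $Y$ by the closed sets $F_i=Y\setminus U_i$, each of which must be pulled back through $h_i^{-1}$ before Theorem~\ref{TR3} applies. The paper's route buys brevity: no separation lemma, no induction, no pull-back step, since Theorem~\ref{TR3} is invoked only for sets that are visibly closed and disjoint from $K$. Your route trades that economy for a cover whose closed pieces arise directly as complements of open sets (no closures need be taken) and for using only Hausdorff separation of compact sets rather than regularity; both are legitimate, and your degenerate cases (including the superfluous $K=\emptyset$ case, which the paper's argument absorbs silently) are handled correctly.
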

\begin{proof}
By Theorem~\ref{TR3}, there is a compact subspace $K$ of $Y$ such that $Y$ is a Lindel\"{o}f $p$-space outside of $K$. Now the proof will follow if $Y=K$.
Otherwise for each $y\in Y\setminus K$ there exists an open set $U_y$ in $Y$ such that $y\in U_y\subseteq\overline{U_y}^Y\subseteq Y\setminus K$ as $Y\setminus K$ is open in $Y$. Clearly every such $\overline{U_y}^Y$ is a Lindel\"{o}f $p$-space.
Next we show that for each $x\in Y$ there is an open set $V$ in $Y$ containing $x$ such that $\overline{V}^Y$ is a Lindel\"{o}f $p$-space. Let $x\in Y$ and fix a $y\in Y\setminus K$. Choose a homeomorphism $f:Y\to Y$ with $f(y)=x$. Next choose an open set $U_y$ in $Y$ containing $y$ such that $\overline{U_y}^Y$ is a Lindel\"{o}f $p$-space.
Now $V=f(U_y)$ is the required open set in $Y$.
We thus obtain a cover $\mathcal{W}$ of $K$ by open sets in $Y$ such that for each $W\in\mathcal{W}$, $\overline{W}^Y$ is an $s$-space. Choose a finite set $\{W_i : 1\leq i\leq k\}\subseteq \mathcal{W}$ such that $K\subseteq\cup_{i=1}^k W_i$. Clearly $Y\setminus\cup_{i=1}^k W_i$ is an $s$-space.  Therefore $Y=\cup_{i=1}^k\overline{W_i}^Y\cup (Y\setminus\cup_{i=1}^k W_i)$ is also an $s$-space.
\end{proof}

\begin{Cor}
\label{CR2}
Let $X$ be a locally Menger Hausdorff $P$-space. If in addition $X$ is a $p$-space with a homogeneous remainder $Y$, then $Y$ is an $s$-space.
\end{Cor}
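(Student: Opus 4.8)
The plan is to recognize that Corollary~\ref{CR2} is a direct specialization of Theorem~\ref{TR4}, exactly paralleling the way Corollary~\ref{CR1} descends from Theorem~\ref{TR3}. The entire content reduces to verifying that the hypotheses of the theorem are met by the space $X$ of the corollary, after which the conclusion is immediate.

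First I would invoke the standing convention of this section that all spaces are assumed to be Tychonoff; in particular $X$ is Tychonoff and hence regular. This is the one point genuinely worth recording, because the proof of Theorem~\ref{TR4} passes through Theorem~\ref{TR3}, whose statement requires $X$ to be regular. Here regularity is supplied by the ambient Tychonoff hypothesis, not by the $P$-space or Hausdorff conditions alone.

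Next I would observe that a locally Menger Hausdorff $P$-space which is additionally a $p$-space is, a fortiori, a regular locally Menger $p$-space: the extra Hausdorff and $P$-space assumptions only shrink the class of admissible $X$ and play no further role. Thus $X$ is a regular locally Menger $p$-space possessing a homogeneous remainder $Y$, so the hypotheses of Theorem~\ref{TR4} hold verbatim. Applying that theorem yields at once that $Y$ is an $s$-space, which is precisely the assertion.

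I expect no real obstacle. The statement is a pure corollary, and the only subtlety to keep in mind is that the regularity needed to run Theorem~\ref{TR3} (and hence Theorem~\ref{TR4}) comes from the Tychonoff standing assumption; once this is noted, the deduction is a one-line specialization.
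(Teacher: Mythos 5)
Your proposal is correct and matches the paper's intent exactly: Corollary~\ref{CR2} is stated without proof precisely because it is an immediate specialization of Theorem~\ref{TR4}, with the regularity needed for the underlying Theorem~\ref{TR3} supplied by the section's standing Tychonoff assumption, just as you note. Nothing further is required.
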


 By \cite[Theorem 2.7]{AGCL}, any (some) remainder of an $s$-space $X$ in a compactification of $X$ is a Lindel\"{o}f $\Sigma$-space. We utilize this result to obtain the following.

\begin{Th}
\label{TR6}
Let $X$ be a regular locally Menger $p$-space. If $X$ has a homogeneous remainder, then $X=L\cup Z$, where $L$ is a closed Lindel\"{o}f $\Sigma$-subspace and $Z$ is an open locally compact subspace of $X$.
\end{Th}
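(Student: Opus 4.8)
The plan is to exploit the symmetry between $X$ and $Y$ inside a single compactification, reducing everything to Theorem~\ref{TR4} together with the cited result \cite[Theorem 2.7]{AGCL}. First I would fix a compactification $bX$ of $X$ with $Y=bX\setminus X$. Since $X$ is a (Tychonoff, hence regular) locally Menger $p$-space whose remainder $Y$ is homogeneous, Theorem~\ref{TR4} immediately gives that $Y$ is an $s$-space. The whole argument then hinges on relocating the role of ``remainder'' from $Y$ to a suitable part of $X$.

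The key observation I would use is that $Y$ sits densely in its own closure: $\overline{Y}^{bX}$ is a closed, hence compact, Hausdorff subspace of $bX$ containing $Y$ as a dense subspace, so $\overline{Y}^{bX}$ is itself a compactification of $Y$. Consequently $\overline{Y}^{bX}\setminus Y$ is a remainder of the $s$-space $Y$, and by \cite[Theorem 2.7]{AGCL} it is a Lindel\"of $\Sigma$-space. Note that this remainder lands inside $X$, since $\overline{Y}^{bX}\subseteq bX=X\cup Y$ and $X\cap Y=\emptyset$ force $\overline{Y}^{bX}\setminus Y=\overline{Y}^{bX}\cap X$.

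I would then set $L=\overline{Y}^{bX}\cap X$ and $Z=bX\setminus\overline{Y}^{bX}$. By the identity just noted, $L$ equals the Lindel\"of $\Sigma$ remainder $\overline{Y}^{bX}\setminus Y$, and it is closed in $X$ because $\overline{Y}^{bX}$ is closed in $bX$. On the other hand $Z$ is open in $bX$ and disjoint from $Y$ (as $Y\subseteq\overline{Y}^{bX}$), so $Z\subseteq bX\setminus Y=X$ and $Z$ is an open subspace of $X$; being an open subspace of the compact Hausdorff space $bX$, it is locally compact. Finally $X=(X\cap\overline{Y}^{bX})\cup(X\setminus\overline{Y}^{bX})=L\cup Z$, which is the required decomposition.

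The routine part is the set-theoretic bookkeeping: verifying $L=\overline{Y}^{bX}\setminus Y$, that $Z\subseteq X$, and the closedness/openness and local compactness claims. The single genuinely substantive step — and the one I expect to be the crux — is the recognition that $\overline{Y}^{bX}$ serves as a compactification of $Y$, so that the portion $\overline{Y}^{bX}\cap X$ of $X$ is itself a remainder (of $Y$) and therefore inherits the Lindel\"of $\Sigma$ property. This ``role reversal'' between a space and its remainder is precisely what lets Theorem~\ref{TR4} and \cite[Theorem 2.7]{AGCL} bear on the structure of $X$ rather than merely of $Y$.
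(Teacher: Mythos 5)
Your proposal is correct and follows essentially the same route as the paper: both take $bY=\overline{Y}^{bX}$ as a compactification of the $s$-space $Y$ (via Theorem~\ref{TR4}), identify $L=bY\cap X=bY\setminus Y$ as a remainder of $Y$ so that \cite[Theorem 2.7]{AGCL} yields the Lindel\"of $\Sigma$ property, and take $Z=bX\setminus bY$, which is open in $bX$ and hence locally compact. The only cosmetic difference is that the paper verifies local compactness of $Z$ by explicitly choosing neighbourhoods with compact closures inside $Z$, while you invoke the standard fact that open subspaces of compact Hausdorff spaces are locally compact.
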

\begin{proof}
Let $bX$ be a compactification of $X$. If $Y=bX\setminus X$ is a homogeneous remainder of $X$, then $Y$ is an $s$-space by Theorem~\ref{TR4}. Now $bY=\overline{Y}^{bX}$ is a compactification of $Y$. Next choose $L=bY\cap X$. Then $L$ is closed in $X$ and $L=bY\setminus Y$. Clearly $L$ is the remainder of $Y$ in $bY$. Also $L$ is a Lindel\"{o}f $\Sigma$-subspace of $X$ by \cite[Theorem 2.7]{AGCL}. For each $x\in Z$, by the openness of $Z=bX\setminus bY$ in $bX$, choose an open set $U_x$ in $bX$ such that $x\in U_x\subseteq\overline{U_x}^{bX}\subseteq Z$. Clearly $Z$ is locally compact and  $X=L\cup Z$.
\end{proof}

\begin{Cor}
\label{PR1}
Let $X$ be a regular locally Menger $p$-space. If $X$ has a homogeneous remainder and in addition $X$ is nowhere locally compact, then $X$  is a Lindel\"{o}f $\Sigma$-space.
\end{Cor}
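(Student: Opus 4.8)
The plan is to derive this directly from Theorem~\ref{TR6}. The key observation is that the decomposition $X = L \cup Z$ provided there has a specific structure that collapses under the extra hypothesis. First I would invoke Theorem~\ref{TR6} to write $X = L \cup Z$, where $L$ is a closed Lindel\"{o}f $\Sigma$-subspace of $X$ and $Z$ is an open locally compact subspace of $X$. The entire conclusion will follow once I show that the nowhere locally compact hypothesis forces $Z = \emptyset$, so that $X = L$ is itself a Lindel\"{o}f $\Sigma$-space.

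The heart of the argument is the interplay between local compactness and the openness of $Z$ in $X$. I would argue as follows: suppose toward a contradiction that $Z \neq \emptyset$ and pick a point $x \in Z$. Since $Z$ is locally compact (as established in the proof of Theorem~\ref{TR6}, via $x \in U_x \subseteq \overline{U_x}^{bX} \subseteq Z$ for suitable open $U_x$ in $bX$), there is an open neighbourhood of $x$ in $Z$ whose closure in $Z$ is compact. The crucial point is that $Z$ is \emph{open} in $X$: any neighbourhood basis of $x$ computed within $Z$ also serves as a neighbourhood basis within $X$, and compactness of a set does not depend on the ambient space. Hence $x$ has a compact neighbourhood in $X$, meaning $X$ is locally compact at $x$. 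This directly contradicts the assumption that $X$ is nowhere locally compact.

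Therefore $Z = \emptyset$ and $X = L$, which is a Lindel\"{o}f $\Sigma$-subspace of $X$; equivalently $X$ itself is a Lindel\"{o}f $\Sigma$-space, as required. The main obstacle, though a mild one, is making precise the transfer of local compactness from the open subspace $Z$ to the whole space $X$ — one must be careful that the compact neighbourhood of $x$ in $Z$ genuinely remains a neighbourhood of $x$ in $X$, which is exactly where the openness of $Z$ in $X$ is used. Once this is observed, the rest is immediate from Theorem~\ref{TR6}, and no further appeal to compactifications, remainders, or the homogeneity hypothesis is needed beyond what Theorem~\ref{TR6} already packaged.
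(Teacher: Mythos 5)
Your proof is correct and matches the paper's intended argument: Corollary~\ref{PR1} is stated as an immediate consequence of Theorem~\ref{TR6}, obtained exactly as you do by noting that nowhere local compactness forces the open locally compact piece $Z$ to be empty, leaving $X=L$ a Lindel\"{o}f $\Sigma$-space. Your care about transferring local compactness from the open subspace $Z$ to $X$ is precisely the (routine) point the paper leaves implicit.
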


\begin{Cor}
\label{CR3}
Let $X$ be a locally Menger Hausdorff $P$-space. If $X$ is a $p$-space with a homogeneous remainder, then $X=L\cup Z$, where $L$ is a closed Lindel\"{o}f $\Sigma$-subspace and $Z$ is an open locally compact subspace of $X$.
\end{Cor}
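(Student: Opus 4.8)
The plan is to obtain this corollary as a direct specialization of Theorem~\ref{TR6}. The hypotheses of that theorem ask for a regular locally Menger $p$-space with a homogeneous remainder, while here we are given a locally Menger Hausdorff $P$-space that is a $p$-space with a homogeneous remainder. Apart from the word \emph{regular}, these hypotheses coincide, so the only point to verify is that the present assumptions force $X$ to be regular; once that is in hand, the decomposition $X=L\cup Z$ is exactly the conclusion of Theorem~\ref{TR6}.

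First I would record that regularity is immediate. By the standing convention of this section, every space under consideration is Tychonoff, and in particular $X$ is regular. If one prefers an intrinsic argument not relying on this convention, one can instead invoke Theorem~\ref{T71}: since $X$ is a locally Menger Hausdorff $P$-space, condition $(4)$ furnishes a basis of $X$ consisting of closed Menger neighbourhoods, so for any $x\in X$ and any open $U$ with $x\in U$ a basic closed neighbourhood $N$ satisfying $x\in\Int(N)\subseteq N\subseteq U$ yields $x\in\Int(N)\subseteq\overline{\Int(N)}\subseteq N\subseteq U$, which is precisely the regularity condition.

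With $X$ recognized as a regular locally Menger $p$-space possessing a homogeneous remainder, every hypothesis of Theorem~\ref{TR6} is met, and that theorem delivers the desired decomposition $X=L\cup Z$, where $L$ is a closed Lindel\"{o}f $\Sigma$-subspace and $Z$ is an open locally compact subspace of $X$. I anticipate no genuine obstacle here: the whole content of the corollary reduces to the observation that, for a Hausdorff $P$-space, local Mengerness (together with the Tychonoff standing assumption) already supplies the regularity demanded by Theorem~\ref{TR6}. The only mild care needed is in the regularity argument, where one must read off the separation property from the basis of closed neighbourhoods provided by Theorem~\ref{T71}.
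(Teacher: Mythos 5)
Your proposal is correct and is essentially the paper's own route: the corollary is stated as a direct specialization of Theorem~\ref{TR6}, with the needed regularity of $X$ supplied either by the section's standing Tychonoff convention or, intrinsically, by the basis of closed Menger neighbourhoods from Theorem~\ref{T71}. Both of your regularity arguments are valid, so nothing further is required.
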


\begin{Cor}
\label{C4}
Let $X$ be a locally Menger Hausdorff $P$-space. If $X$ is a $p$-space with a homogeneous remainder and in addition $X$ is nowhere locally compact, then $X$  is a Lindel\"{o}f $\Sigma$-space.
\end{Cor}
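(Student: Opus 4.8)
The plan is to deduce this directly from Corollary~\ref{CR3}, mirroring the way Corollary~\ref{PR1} is obtained from Theorem~\ref{TR6}. Since $X$ is a locally Menger Hausdorff $P$-space that is a $p$-space with a homogeneous remainder, Corollary~\ref{CR3} applies and yields a decomposition $X = L \cup Z$, where $L$ is a closed Lindel\"{o}f $\Sigma$-subspace of $X$ and $Z$ is an open locally compact subspace of $X$. The goal is then to show that the nowhere local compactness hypothesis forces $Z$ to be empty, so that $X = L$ is a Lindel\"{o}f $\Sigma$-space.

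First I would argue that $Z = \emptyset$. Suppose not, and fix a point $x \in Z$. Because $Z$ is locally compact, there is an open set $U$ in $Z$ with $x \in U$ and $\overline{U}^Z$ compact. Since $Z$ is open in $X$, the set $U$ is open in $X$ as well. The crucial observation is that the closure taken in $Z$ coincides with the closure taken in $X$: as $\overline{U}^Z$ is compact it is closed in $X$, whence $\overline{U}^X \subseteq \overline{U}^Z$, while $\overline{U}^Z = \overline{U}^X \cap Z \subseteq \overline{U}^X$ holds trivially, so $\overline{U}^X = \overline{U}^Z$ is compact. Thus $x$ has a neighbourhood in $X$ with compact closure, so $X$ is locally compact at $x$, contradicting the assumption that $X$ is nowhere locally compact. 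Hence $Z = \emptyset$.

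Consequently $X = L$, and since $L$ is a Lindel\"{o}f $\Sigma$-subspace of itself, $X$ is a Lindel\"{o}f $\Sigma$-space, as desired. The only step requiring any care is the transfer of compactness of the closure from the open subspace $Z$ to the ambient space $X$; everything else is an immediate appeal to Corollary~\ref{CR3}. I do not expect a serious obstacle here, as the argument is the verbatim Hausdorff $P$-space analogue of the reasoning behind Corollary~\ref{PR1}.
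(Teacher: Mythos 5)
Your proposal is correct and follows exactly the route the paper intends: Corollary~\ref{C4} is deduced from Corollary~\ref{CR3} by using nowhere local compactness to force the open locally compact piece $Z$ to be empty, precisely mirroring how Corollary~\ref{PR1} follows from Theorem~\ref{TR6}. Your careful transfer of compact closures from the open subspace $Z$ to $X$ (valid since $X$ is Hausdorff) fills in the only nontrivial detail, so there is nothing to object to.
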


Let $\beta X$ denote the Stone-\v{C}ech compactification of $X$. For any property $\mathcal P$, we say that $X$ has property $\mathcal P$ at infinity if $\beta X\setminus X$ has property $\mathcal P$.

\begin{Def}[{\cite{SPC}}]
\label{DN1}
A continuous mapping $f$ of a space $X$ onto a space $Y$ is called a meshing map of $X$ onto $Y$ if there exist compactifications $bX$ of $X$ and $cY$ of $Y$, and a continuous extension $\tilde{f}$ of $f$ over $bX$ onto $cY$ such that $\tilde{f}|_{bX\setminus X}$ is a homeomorphism onto $cY\setminus Y$.
\end{Def}

Observe that every meshing map is perfect.

\begin{Th}[{\cite{Cech}}]
\label{TN1}
Any compactification $bX$ of a space $X$ is the image of $\beta X$ under a (unique) continuous mapping $f$ that keeps $X$ pointwise fixed. Furthermore $f(\beta X\setminus X)=bX\setminus X$.
\end{Th}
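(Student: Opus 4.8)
The plan is to build the map $f$ directly from the universal extension property of the Stone--\v{C}ech compactification and then analyse its behaviour on the remainder.

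First I would produce the map. Since $X$ is Tychonoff and $bX$ is a compact Hausdorff space containing $X$ as a dense subspace, the inclusion $j:X\to bX$ is a continuous map of $X$ into a compact Hausdorff space. The defining property of $\beta X$ then yields a continuous extension $f:\beta X\to bX$ with $f|_X=j$, i.e.\ $f$ keeps $X$ pointwise fixed. Uniqueness of such an $f$ over all of $\beta X$ follows because $X$ is dense in $\beta X$ and $bX$ is Hausdorff: two continuous maps that agree on a dense subset of a space, with values in a Hausdorff space, must coincide.

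Next I would settle surjectivity together with the easy half of the remainder statement. As $\beta X$ is compact and $f$ is continuous, $f(\beta X)$ is compact, hence closed in the Hausdorff space $bX$; since it contains $f(X)=X$, which is dense in $bX$, we get $f(\beta X)=bX$. For the inclusion $bX\setminus X\subseteq f(\beta X\setminus X)$, take $y\in bX\setminus X$; by surjectivity $y=f(z)$ for some $z\in\beta X$, and $z$ cannot lie in $X$ (otherwise $f(z)=z=y\in X$, a contradiction), so $z\in\beta X\setminus X$.

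The substantive point is the reverse inclusion $f(\beta X\setminus X)\subseteq bX\setminus X$, equivalently $f^{-1}(X)=X$; I expect this to be the main obstacle. Suppose toward a contradiction that some $z\in\beta X\setminus X$ satisfies $f(z)=y\in X$. Since $X$ is dense in $\beta X$, choose a net $(x_\alpha)$ in $X$ with $x_\alpha\to z$ in $\beta X$. Continuity gives $x_\alpha=f(x_\alpha)\to f(z)=y$ in $bX$. The crucial observation is that, because $X$ carries the subspace topology inherited from $bX$ and $y\in X$, the convergence $x_\alpha\to y$ in $bX$ of a net lying entirely in $X$ forces $x_\alpha\to y$ in $X$; as the inclusion $X\hookrightarrow\beta X$ is an embedding, we then also have $x_\alpha\to y$ in $\beta X$. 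Thus $(x_\alpha)$ converges to both $z$ and $y$ in the Hausdorff space $\beta X$, whence $z=y\in X$, contradicting $z\in\beta X\setminus X$. Combining the two inclusions yields $f(\beta X\setminus X)=bX\setminus X$, which completes the argument.
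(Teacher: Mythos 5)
Your proof is correct. Note that the paper does not prove this statement at all: it is quoted as a classical result of \v{C}ech (cf.\ also Engelking, Theorem 3.5.7), so there is no internal argument to compare yours against. Your treatment of existence, uniqueness, surjectivity, and the inclusion $bX\setminus X\subseteq f(\beta X\setminus X)$ is the standard one, and your net argument for the key inclusion $f(\beta X\setminus X)\subseteq bX\setminus X$ is a valid reformulation of the usual textbook proof, which runs via closures instead of nets: if $z\in\beta X\setminus X$ had $f(z)=y\in X$, pick disjoint open sets $U\ni z$, $V\ni y$ in the Hausdorff space $\beta X$; density gives $z\in\overline{U\cap X}^{\beta X}$, continuity gives $y=f(z)\in\overline{U\cap X}^{bX}$, hence $y$ lies in the closure of $U\cap X$ in $X$, contradicting that $V\cap X$ is a neighbourhood of $y$ in $X$ disjoint from $U\cap X$. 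Both arguments use exactly the same ingredients (density of $X$, the fact that both compactifications induce the original topology on $X$, and Hausdorffness of $\beta X$), so yours is essentially the standard proof in different clothing.
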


It is easy to observe that both $f$ and its restriction to $\beta X\setminus X$ in Theorem~\ref{TN1} are perfect mappings. Use Corollary~\ref{T1501} to obtain the following.
\begin{Cor}
\label{CN1}
A space $X$ is locally Menger at infinity if and only if for any compactification $bX$ of $X$, $bX\setminus X$ is locally Menger.
\end{Cor}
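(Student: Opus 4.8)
The plan is to exploit the universal mapping property of the Stone-\v{C}ech compactification recorded in Theorem~\ref{TN1}, together with the preservation of the locally Menger property under perfect mappings supplied by Corollary~\ref{T1501}. Since ``locally Menger at infinity'' unpacks (by the definition stated just before) to ``$\beta X\setminus X$ is locally Menger,'' the whole statement reduces to comparing the remainder in $\beta X$ with the remainder in an arbitrary compactification $bX$, and Theorem~\ref{TN1} is exactly the bridge between them.

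First I would dispatch the easy direction. Because $\beta X$ is itself a compactification of $X$, the hypothesis that $bX\setminus X$ is locally Menger for \emph{every} compactification $bX$ applies in particular to $bX=\beta X$, giving that $\beta X\setminus X$ is locally Menger; that is precisely the statement that $X$ is locally Menger at infinity. No further argument is needed here.

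For the converse, suppose $X$ is locally Menger at infinity, so $\beta X\setminus X$ is locally Menger, and let $bX$ be an arbitrary compactification. By Theorem~\ref{TN1} there is a continuous surjection $f:\beta X\to bX$ fixing $X$ pointwise and satisfying $f(\beta X\setminus X)=bX\setminus X$. As observed in the remark immediately preceding the corollary, the restriction $f|_{\beta X\setminus X}:\beta X\setminus X\to bX\setminus X$ is perfect. Thus $f|_{\beta X\setminus X}$ is a perfect mapping from the locally Menger space $\beta X\setminus X$ \emph{onto} $bX\setminus X$, so Corollary~\ref{T1501} yields at once that $bX\setminus X$ is locally Menger. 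Since $bX$ was arbitrary, this establishes the claim.

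I do not anticipate a genuine obstacle: the mathematical content is already encapsulated in Theorem~\ref{TN1} and Corollary~\ref{T1501}. The only points meriting a moment's care are verifying that the restriction $f|_{\beta X\setminus X}$ is surjective onto $bX\setminus X$ (which is exactly the final clause of Theorem~\ref{TN1}) and that it is perfect (the observation recorded before the corollary); with those two facts in hand, the preservation result applies verbatim and the proof is complete.
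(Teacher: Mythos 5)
Your proof is correct and follows exactly the paper's intended argument: the easy direction notes that $\beta X$ is itself a compactification, and the converse applies Corollary~\ref{T1501} to the perfect restriction $f|_{\beta X\setminus X}$ of the map from Theorem~\ref{TN1}. This is precisely what the paper does in the remark immediately preceding the corollary.
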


For any space $X$, let $N(X)$ denote the set of points of $X$ at which $X$ is not locally Menger. It is immediate that $N(X)$ is closed.

\begin{Prop}
\label{PN1}
For any compactification $bX$ of $X$, $N(X)\subseteq\overline{bX\setminus X}$.
\end{Prop}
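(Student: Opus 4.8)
The plan is to prove the contrapositive: if a point $x\in X$ does not lie in $\overline{bX\setminus X}$, then $X$ is locally Menger at $x$ (indeed locally compact there), so that $x\notin N(X)$. Since $N(X)\subseteq X$ by definition, establishing $N(X)\cap\bigl(X\setminus\overline{bX\setminus X}\bigr)=\emptyset$ immediately yields $N(X)\subseteq\overline{bX\setminus X}$.

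First I would set $W=bX\setminus\overline{bX\setminus X}$. This set is open in $bX$, and because it is disjoint from the remainder $bX\setminus X$ it satisfies $W\subseteq X$; moreover $x\in W$ by the assumption on $x$. Next, using that $bX$ is compact Hausdorff and hence regular, I would choose an open set $V$ in $bX$ with $x\in V\subseteq\overline{V}^{bX}\subseteq W$. Since $\overline{V}^{bX}$ is closed in the compact space $bX$, it is compact, and as $\overline{V}^{bX}\subseteq W\subseteq X$ we have $\overline{V}^{X}=\overline{V}^{bX}\cap X=\overline{V}^{bX}$, so $\overline{V}^{X}$ is a compact, and therefore Menger, subspace of $X$. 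Because $V\subseteq X$ is open in $bX$, it is open in the subspace $X$ as well.

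Finally, taking $U=V$ and $Y=\overline{V}^{X}$ exhibits an open set $U$ and a Menger subspace $Y$ of $X$ with $x\in U\subseteq Y$, which is exactly the assertion that $X$ is locally Menger at $x$. Hence $x\notin N(X)$, and the contrapositive is complete.

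I expect no serious obstacle in this argument; the only step requiring a moment's care is the observation that a point lying outside $\overline{bX\setminus X}$ possesses a compact neighbourhood in $X$. This is the classical characterization of the points of local compactness in terms of a compactification, and since compactness forces Mengerness it is precisely what drives the conclusion.
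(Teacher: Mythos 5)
Your proof is correct and follows essentially the same route as the paper's: both use the regularity of the compact Hausdorff space $bX$ to place a compact (hence Menger) closed neighbourhood of the point inside an open set missing the remainder, thereby showing $X$ is locally Menger (indeed locally compact) at that point. The only difference is presentational: you argue the contrapositive, while the paper argues by contradiction from $x\in N(X)$.
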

\begin{proof}
Let $x\in N(X)$ and $U$ be an open set in $bX$ containing $x$. If possible suppose that $U\cap (bX\setminus X)=\emptyset$. Then we get an open set $V$ in $bX$ containing $x$ such that $\overline{V}\subseteq U$ and hence $\overline{V}\subseteq X$. Thus $V$ is an open set in $X$ with $\overline{V}$ is compact. It follows that $X$ is locally Menger at $x$ (even $X$ is locally compact at $x$), a contradiction. Thus $U\cap (bX\setminus X)\neq\emptyset$ and hence $x\in\overline{bX\setminus X}$. This completes the proof.
\end{proof}

We say that a closed continuous mapping $f$ from a space $X$ onto a space $Y$ is nearly perfect if for each $y\in Y$, $f^{-1}(y)$ is Menger. We now present few observations about nearly perfect mappings.

\begin{Lemma}[{\cite[Corollary 3.6]{Box}}]
\label{LN1}
If $f$ is a nearly perfect mapping from a space $X$ onto a Lindel\"{o}f $P$-space, then $X$ is a Menger space.
\end{Lemma}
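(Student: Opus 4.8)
The plan is to fix an arbitrary sequence $(\mathcal{U}_n)_{n\in\mathbb{N}}$ of open covers of $X$ and to manufacture the required finite selections by solving the problem fibrewise and then transporting the solution down to $Y$, where the Lindel\"of and $P$-space hypotheses can be exploited. First I would work over a single point $y\in Y$: since the fibre $f^{-1}(y)$ is Menger, applying the Menger property to the traces of the $\mathcal{U}_n$ on $f^{-1}(y)$ yields finite families $\mathcal{V}_n^{y}\subseteq\mathcal{U}_n$ with $f^{-1}(y)\subseteq\bigcup_{n}\bigcup\mathcal{V}_n^{y}$. Writing $S_y=\bigcup_n\bigcup\mathcal{V}_n^{y}$, this is an open set containing the whole fibre, so the closedness of $f$ makes $O_y=Y\setminus f(X\setminus S_y)$ an open neighbourhood of $y$ with the crucial feature that $f^{-1}(O_y)\subseteq S_y$; that is, a cover of one fibre automatically spreads to a cover of the preimage of an entire neighbourhood. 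The collection $\{O_y:y\in Y\}$ is an open cover of $Y$, so the Lindel\"of property produces a countable subcover $\{O_{y_k}:k\in\mathbb{N}\}$, whence $X=\bigcup_k f^{-1}(O_{y_k})$ with $f^{-1}(O_{y_k})\subseteq S_{y_k}$ for every $k$.

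Next I would try to combine these countably many data into one admissible selection, the model being the fact (recorded in the preliminaries) that the Menger property is preserved under countable unions. The standard device is to fix a partition $\mathbb{N}=\bigsqcup_{k}A_k$ into infinitely many infinite sets and to arrange that the $k$-th piece of the cover is handled using only the levels in $A_k$, so that at each level $n$ exactly one piece contributes and the family $\mathcal{V}_n$ stays finite. Concretely, for each $k$ I would re-run the fibrewise step for $f^{-1}(y_k)$ using the reindexed sequence $(\mathcal{U}_n)_{n\in A_k}$, obtain finite families $\tilde{\mathcal{V}}_n^{k}\subseteq\mathcal{U}_n$ ($n\in A_k$) covering $f^{-1}(y_k)$, spread them through $f$ to a neighbourhood $O_{y_k}'$, and set $\mathcal{V}_n=\tilde{\mathcal{V}}_n^{k}$ for the unique $k$ with $n\in A_k$. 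Provided the block-spread neighbourhoods still cover $Y$, one gets $\bigcup_n\bigcup\mathcal{V}_n\supseteq\bigcup_k f^{-1}(O_{y_k}')=X$, as required.

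The hard part is exactly this last proviso: covering a fibre with only the levels of a single block $A_k$ generally produces a \emph{smaller} set $S_{y_k}'$, hence a smaller spread neighbourhood $O_{y_k}'\subseteq O_{y_k}$, and there is no a priori reason for $\{O_{y_k}'\}$ to cover $Y$. This is precisely the point at which the $P$-space hypothesis must enter, and it is what separates the statement from the (false) assertion one would get with a merely Menger base. The cleaner resolution I would pursue is to prove that each $f^{-1}(O_{y_k})$ is itself Menger, so that $X$ is a countable union of Menger subspaces and countable additivity finishes the argument with no reindexing conflict. Here I would use that a Tychonoff $P$-space is zero-dimensional, refine the countable cover $\{O_{y_k}\}$ to a countable \emph{clopen} partition $\{D_j\}$ of $Y$ (so that $X=\bigsqcup_j f^{-1}(D_j)$ is a topological sum of clopen pieces), and attack each piece by a spreading recursion as above; the Lindel\"of property applied \emph{inside} the $P$-space is what forces such a recursion to exhaust $Y$ after only countably many stages, which is ultimately what licenses the block assignment. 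Carrying out this termination argument, and verifying that the countable intersections of the closed remainders behave as the $P$-space property demands, is the main technical obstacle and the step I would expect to require the most care.
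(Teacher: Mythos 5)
Note first that the paper itself gives no proof of this lemma (it is quoted from \cite[Corollary 3.6]{Box}), so your argument must stand on its own; it does not, because the step you yourself call ``the main technical obstacle'' is exactly the missing idea, and the repair you sketch for it is circular. Your first two paragraphs are sound: the fibrewise Menger selection, the spreading $O_y=Y\setminus f(X\setminus S_y)$ with $f^{-1}(O_y)\subseteq S_y$ (using closedness of $f$), and the Lindel\"{o}f extraction are all correct, and you correctly diagnose why the naive block assignment fails. But the proposed fix --- pass to a countable clopen partition $\{D_j\}$ of $Y$ and show each $f^{-1}(D_j)$ is Menger --- reduces the lemma to itself: each $D_j$ is a clopen, hence Lindel\"{o}f, $P$-subspace of $Y$, and the restriction $f_{D_j}:f^{-1}(D_j)\to D_j$ is again a nearly perfect map onto a Lindel\"{o}f $P$-space, so ``attacking each piece'' is verbatim the original problem. (You also cannot recurse on $f^{-1}(O_{y_k})$ instead: an open subspace of a Lindel\"{o}f $P$-space need not be Lindel\"{o}f, e.g.\ the discrete part of the one-point Lindel\"{o}fication of a discrete set of size $\omega_1$.) Finally, a ``spreading recursion'' cannot be made to terminate: $\mathbb{N}$ admits only countably many pairwise disjoint infinite blocks, so the recursion has at most $\omega$ stages, and nothing forces the $\omega$ many neighbourhoods so produced to cover $Y$; Lindel\"{o}fness extracts subcovers from covers, it does not terminate recursions.

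The correct use of the $P$-space hypothesis is different: pre-solve for \emph{all} blocks at \emph{every} point, then intersect. Fix a partition of $\mathbb{N}$ into pairwise disjoint infinite sets $A_k$, $k\in\mathbb{N}$, at the outset. For each $y\in Y$ and each $k$, apply the Menger property of $f^{-1}(y)$ to the sequence $(\mathcal{U}_n)_{n\in A_k}$ to get finite $\mathcal{V}^{y,k}_n\subseteq\mathcal{U}_n$ ($n\in A_k$) with $f^{-1}(y)\subseteq S^k_y:=\bigcup_{n\in A_k}\bigcup\mathcal{V}^{y,k}_n$, and spread each to an open set $O^k_y$ containing $y$ with $f^{-1}(O^k_y)\subseteq S^k_y$. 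Now --- the only place the $P$-property is needed --- the countable intersection $O_y:=\bigcap_{k\in\mathbb{N}}O^k_y$ is open and contains $y$. By Lindel\"{o}fness choose $y_1,y_2,\dotsc$ with $Y=\bigcup_j O_{y_j}$, and define $\mathcal{V}_n:=\mathcal{V}^{y_j,j}_n$ for the unique $j$ with $n\in A_j$. Since $O_{y_j}\subseteq O^j_{y_j}$, we get $f^{-1}(O_{y_j})\subseteq S^j_{y_j}=\bigcup_{n\in A_j}\bigcup\mathcal{V}_n$, hence $\bigcup_{n\in\mathbb{N}}\bigcup\mathcal{V}_n\supseteq\bigcup_j f^{-1}(O_{y_j})=X$, so $X$ is Menger. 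No zero-dimensionality, clopen partitions, or recursion is needed; your proof becomes correct once your second paragraph is replaced by this ``all blocks at each point, then intersect'' step.
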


\begin{Lemma}
\label{LN2}
Let $f$ be a continuous mapping from a space $X$ onto a space $Y$. The following assertions are equivalent.
\begin{enumerate}[wide,label={\upshape(\arabic*)},
leftmargin=*]
  \item The mapping $f$ is nearly perfect.

  \item\begin{enumerate}[wide=0pt,label={\upshape(\roman*)},leftmargin=*]
          \item For each $y\in Y$ and each open set $U$ in $X$ containing $f^{-1}(y)$, there exists an open set $V$ in $Y$ containing $y$ such that $f^{-1}(V)\subseteq U$.
          \item For each Lindel\"{o}f $P$-space $Z\subseteq Y$, $f^{-1}(Z)$ is Menger.
        \end{enumerate}
\end{enumerate}
\end{Lemma}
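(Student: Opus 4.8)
The plan is to recognize that condition (2)(i) is nothing but the familiar characterization of closed maps, so that the entire equivalence collapses to matching condition (2)(ii) against the requirement that every fiber be Menger. Accordingly, the first step is to record the elementary fact that a continuous surjection $f:X\to Y$ is closed if and only if for every $y\in Y$ and every open set $U\supseteq f^{-1}(y)$ there is an open set $V\ni y$ with $f^{-1}(V)\subseteq U$. For the forward implication one takes $V=Y\setminus f(X\setminus U)$, which is open because $f$ is closed and contains $y$ since $f^{-1}(y)\subseteq U$, and checks $f^{-1}(V)\subseteq U$ directly. For the reverse implication, given a closed set $A$ and a point $y\notin f(A)$, one has $f^{-1}(y)\subseteq X\setminus A$, and applying the condition with $U=X\setminus A$ produces an open $V\ni y$ disjoint from $f(A)$; hence $Y\setminus f(A)$ is open. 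Thus condition (2)(i) is exactly the statement that $f$ is closed.

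For $(1)\Rightarrow(2)$, condition (2)(i) is then immediate from the closedness half of near perfectness. For condition (2)(ii), given a Lindel\"of $P$-space $Z\subseteq Y$ I would pass to the restriction $f_Z:f^{-1}(Z)\to Z$. By Lemma~\ref{LL201} this restriction is again closed; it is continuous and surjective onto $Z$, and its fiber over any $z\in Z$ equals $f^{-1}(z)$, which is Menger by hypothesis. Hence $f_Z$ is a nearly perfect mapping onto the Lindel\"of $P$-space $Z$, and Lemma~\ref{LN1} yields that $f^{-1}(Z)$ is Menger.

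For $(2)\Rightarrow(1)$, condition (2)(i) already gives that $f$ is closed. To obtain that each fiber is Menger I would apply condition (2)(ii) to the one-point subspace $Z=\{y\}$: a singleton is compact, hence Lindel\"of, and trivially a $P$-space, so $f^{-1}(\{y\})=f^{-1}(y)$ is Menger. Combined with closedness and the standing continuity and surjectivity assumptions on $f$, this shows that $f$ is nearly perfect. The only genuinely nontrivial ingredient is the closed-map characterization underpinning (2)(i); the remaining work is a direct application of Lemmas~\ref{LL201} and~\ref{LN1}, together with the trivial observation that singletons are Lindel\"of $P$-spaces, so I do not anticipate any serious obstacle.
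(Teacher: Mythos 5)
Your proof is correct and follows essentially the same route as the paper's: the same closed-map characterization via $V=Y\setminus f(X\setminus U)$ for (2)(i), the same use of Lemma~\ref{LL201} and Lemma~\ref{LN1} on the restriction $f_Z$ for (2)(ii), and the same recovery of closedness plus Menger fibers for the converse. Your explicit remark that singletons are Lindel\"{o}f $P$-spaces just makes precise what the paper leaves implicit in its final sentence.
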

\begin{proof}
$(1)\Rightarrow (2)$. (i) Let $y\in Y$ and $U$ be an open set in $X$ with $f^{-1}(y)\subseteq U$. Then $V=Y\setminus f(X\setminus U)$ is an open set in $Y$ containing $y$ as $f$ is closed and $f^{-1}(y)\subseteq U$. Clearly $f^{-1}(V)\subseteq U$.

(ii) Let $Z$ be a Lindel\"{o}f $P$-space of $Y$. Then the restriction $f_Z:f^{-1}(Z)\to Z$ is closed (see Lemma~\ref{LL201}) and continuous. Obviously for each $z\in Z$, $f_Z^{-1}(z)$ is a Menger subspace of $f^{-1}(Z)$ and hence by Lemma~\ref{LN1}, $f^{-1}(Z)$ is also Menger.\\

$(2)\Rightarrow (1)$. We claim that $f$ is closed. Let $F$ be a closed subset of $X$ such that $F\neq X$. Let $y\in Y\setminus f(F)$. Then $X\setminus F$ is open in $X$ containing $f^{-1}(y)$. From the hypothesis (i), there exists an open set $V$ in $Y$ containing $y$ such that $f^{-1}(V)\subseteq X\setminus F$. It follows that $V\subseteq Y\setminus f(F)$ as for any $z\in V$ we have $f^{-1}(z)\cap F=\emptyset$ and so $z\in Y\setminus f(F)$. Thus $f$ is closed.

From the hypothesis (ii), it is clear that for each $y\in Y$, $f^{-1}(y)$ is Menger.
\end{proof}

\begin{Th}
\label{TN2}
If $f:X\to Y$ is a nearly perfect mapping from a space $X$ onto a $P$-space $Y$, then $f(N(X))=N(Y)$.
\end{Th}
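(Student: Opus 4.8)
The plan is to establish the set equality $f(N(X))=N(Y)$ by proving the two inclusions separately, each in contrapositive form phrased through the \emph{complements}, that is, through the points at which the spaces are locally Menger. Writing $\mathrm{LM}(X)=X\setminus N(X)$ for this set (and similarly for $Y$), the two statements to be shown become: (A) if $y\in\mathrm{LM}(Y)$ then $f^{-1}(y)\subseteq\mathrm{LM}(X)$, which yields $f(N(X))\subseteq N(Y)$; and (B) if $f^{-1}(y)\subseteq\mathrm{LM}(X)$ then $y\in\mathrm{LM}(Y)$, which yields $N(Y)\subseteq f(N(X))$. Throughout I will use that $f$ is nearly perfect, so Lemma~\ref{LN2} applies, and that every Menger subspace of the $P$-space $Y$ is automatically a Lindel\"of $P$-space (being a subspace of a $P$-space, hence a $P$-space, and Menger, hence Lindel\"of).

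For (A), suppose $Y$ is locally Menger at $y$, so there are an open set $U\ni y$ and a Menger subspace $M$ of $Y$ with $y\in U\subseteq M$. As just noted, $M$ is a Lindel\"of $P$-space, so condition $(2)(\mathrm{ii})$ of Lemma~\ref{LN2} gives that $f^{-1}(M)$ is Menger. For an arbitrary $x\in f^{-1}(y)$ the set $f^{-1}(U)$ is then open, contains $x$, and is contained in the Menger subspace $f^{-1}(M)$; hence $X$ is locally Menger at $x$. Thus $f^{-1}(y)\cap N(X)=\emptyset$, i.e. $y\notin f(N(X))$, which is exactly the contrapositive of the desired inclusion.

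For (B), assume $X$ is locally Menger at every $x\in f^{-1}(y)$ and choose for each such $x$ an open set $U_x\ni x$ and a Menger subspace $Z_x$ with $x\in U_x\subseteq Z_x$. Since $f$ is nearly perfect, $f^{-1}(y)$ is Menger and hence Lindel\"of, so the cover $\{U_x\}$ admits a countable subcover $\{U_{x_n}:n\in\mathbb{N}\}$ of $f^{-1}(y)$. Setting $U=\cup_n U_{x_n}$ and $Z=\cup_n Z_{x_n}$, the set $U$ is open with $f^{-1}(y)\subseteq U\subseteq Z$, and $Z$ is Menger as a countable union of Menger subspaces. Because $f$ is closed and $f^{-1}(y)\subseteq U$, the set $V=Y\setminus f(X\setminus U)$ is open in $Y$, contains $y$, and satisfies $f^{-1}(V)\subseteq U\subseteq Z$. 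Continuity of $f$ makes $f(Z)$ a Menger subspace of $Y$, and for any $v\in V$ one has $\emptyset\neq f^{-1}(v)\subseteq f^{-1}(V)\subseteq Z$, so $v\in f(Z)$; thus $y\in V\subseteq f(Z)$ and $Y$ is locally Menger at $y$. This is the contrapositive of $N(Y)\subseteq f(N(X))$, completing the argument.

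The main obstacle is inclusion (B): producing a single open neighbourhood of $y$ sitting inside one Menger subspace of $Y$, starting only from pointwise local Mengerness along the (possibly large) fibre $f^{-1}(y)$. The crucial leverage comes from near-perfectness, which supplies both the Lindel\"ofness of the fibre (to pass to a countable, hence Menger, union) and the closedness of $f$ (to convert a saturated open set containing the fibre into an open neighbourhood $V$ of $y$ with $f^{-1}(V)\subseteq Z$). Direction (A) is comparatively routine once one recognises that condition $(2)(\mathrm{ii})$ of Lemma~\ref{LN2} is precisely the tool that pulls a Menger witness back across $f$.
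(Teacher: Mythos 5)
Your proposal is correct and follows essentially the same route as the paper's proof: one inclusion by pulling the Menger witness back through $f$ (using that Menger subspaces of the $P$-space $Y$ are Lindel\"of $P$-spaces, so Lemma~\ref{LN2}(2)(ii) applies), and the other by covering the Menger fibre $f^{-1}(y)$ with countably many local Menger witnesses and pushing their union forward through an open set obtained from closedness of $f$. The only differences are cosmetic: you argue contrapositively where the paper argues by contradiction, and you construct $V=Y\setminus f(X\setminus U)$ directly instead of citing Lemma~\ref{LN2}(2)(i).
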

\begin{proof}
Let $y\in f(N(X))$. Then $y=f(x)$ for some $x\in N(X)$. Suppose that $y\notin N(Y)$. Choose an open set $U$ and a Menger subspace $Z$ of $Y$ such that $y\in U\subseteq Z$. This gives us $f^{-1}(y)\subseteq f^{-1}(U)\subseteq f^{-1}(Z)$ i.e. $x\in f^{-1}(U)\subseteq f^{-1}(Z)$, where $f^{-1}(U)$ is open in $X$ and $f^{-1}(Z)$ is a Menger subspace of $X$. It follows that $X$ is locally Menger at $x$, which is a contradiction. Thus $y\in N(Y)$ and hence $f(N(X))\subseteq N(Y)$.

Conversely take $y\in N(Y)$. Suppose that $y\notin f(N(X))$. Clearly $f^{-1}(y)\cap N(X)=\emptyset$ i.e. $f^{-1}(y)\subseteq X\setminus N(X)$. For each $x\in f^{-1}(y)$ we choose an open set $U_x$ and a Menger subspace $M_x$ of $X$ such that $x\in U_x\subseteq M_x$. Now the collection $\{U_x : x\in f^{-1}(y)\}$ is a cover of $f^{-1}(y)$ by open sets in $X$. Since $f^{-1}(y)$ is Menger, there exists a countable subset $\{U_{x_i} : i\in\mathbb{N}\}$ such that $f^{-1}(y)\subseteq\cup_{i\in\mathbb{N}}U_{x_i}
\subseteq\cup_{i\in\mathbb{N}}M_{x_i}$. By Lemma~\ref{LN2}, we can find an open set $V$ in $Y$ containing $y$ such that $f^{-1}(V)\subseteq\cup_{i\in\mathbb{N}}U_{x_i}$. Subsequently $V\subseteq f(\cup_{i\in\mathbb{N}}U_{x_i})\subseteq f(\cup_{i\in\mathbb{N}}M_{x_i})$. Since $U_{i\in\mathbb{N}}M_{x_i}$ is Menger, $f(\cup_{i\in\mathbb{N}}M_{x_i})$ is also Menger and hence $y\notin N(Y)$, which is a contradiction. Thus $y\in f(N(X))$ and consequently $N(Y)\subseteq f(N(X))$. This completes the proof.
\end{proof}


\begin{Lemma}[{\cite[Theorem 88]{Stone}}]
\label{LN4}
If $f$ is any continuous mapping from a space $X$ onto a space $Y$, and if $bY$ is any compactification of $Y$, then $f$ has a (unique) continuous extension $f_b$ over $\beta X$ onto $bY$.
\end{Lemma}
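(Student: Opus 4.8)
The plan is to deduce everything from the universal mapping property of the Stone--\v{C}ech compactification. Since all spaces here are Tychonoff, $\beta X$ exists and contains $X$ as a dense subspace, and it enjoys the property that every continuous map from $X$ into a compact Hausdorff space extends uniquely to a continuous map on $\beta X$. First I would form the composite $g\colon X\to bY$ obtained by following $f\colon X\to Y$ with the canonical inclusion $Y\hookrightarrow bY$; this is continuous as a composition of continuous maps. Because $bY$ is a compactification of $Y$, it is in particular a compact Hausdorff space, so the universal property of $\beta X$ furnishes a unique continuous map $f_b\colon\beta X\to bY$ with $f_b|_X=g$. This $f_b$ is exactly the asserted continuous extension of $f$ over $\beta X$, and the uniqueness clause of the universal property gives the uniqueness assertion. (Uniqueness can also be seen directly: $X$ is dense in $\beta X$ and $bY$ is Hausdorff, so two continuous maps $\beta X\to bY$ that agree with $f$ on $X$ must coincide.)

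It then remains to verify surjectivity, i.e. $f_b(\beta X)=bY$. Here I would combine two facts. Since $\beta X$ is compact and $f_b$ is continuous, the image $f_b(\beta X)$ is compact and hence closed in the Hausdorff space $bY$. On the other hand, $f_b(\beta X)\supseteq f_b(X)=f(X)=Y$, using that $f$ maps $X$ onto $Y$. Thus $f_b(\beta X)$ is a closed subset of $bY$ containing $Y$; as $Y$ is dense in its compactification $bY$, we get $f_b(\beta X)\supseteq\overline{Y}^{bY}=bY$, and therefore $f_b(\beta X)=bY$.

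I do not expect a genuine obstacle: the whole content is the universal property of $\beta X$ applied to the composite $X\to Y\hookrightarrow bY$, together with the density of $Y$ in $bY$. The only step meriting a moment's care is surjectivity, where one must use compactness to guarantee that the image is closed and then invoke density of $Y$ to conclude that this closed set exhausts $bY$.
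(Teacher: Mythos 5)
Your proof is correct. Note that the paper does not actually prove this lemma: it imports it by citation as \cite[Theorem 88]{Stone}, so there is no internal argument to compare against. Your argument is the standard one --- apply the universal extension property of $\beta X$ to the composite $X\to Y\hookrightarrow bY$ (legitimate since all spaces in that section are Tychonoff and $bY$ is compact Hausdorff), get uniqueness from density of $X$ in $\beta X$ together with $bY$ being Hausdorff, and get surjectivity because the compact image $f_b(\beta X)$ is closed in $bY$ and contains the dense subset $Y$ --- and every step is sound.
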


Recall that if $f$ is a continuous mapping of a space $X$ onto a space $Y$, then a cross section of $f$ is a continuous mapping $g$ of $Y$ into $X$ such that $f\circ g$ is the identity map of $Y$ onto itself.

\begin{Th}
\label{TN3}
If a meshing map $f$ of a space $X$ onto a $P$-space $Y$ has a cross section, then $f|_{N(X)}$ is a homeomorphism onto $N(Y)$.
\end{Th}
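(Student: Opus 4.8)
The plan is to reduce the statement to injectivity of $f|_{N(X)}$ and then to manufacture a continuous global section of the meshing extension that recovers each point of $N(X)$ from its image, thereby forcing injectivity.

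First I would dispose of the soft parts. Since every meshing map is perfect, each fiber $f^{-1}(y)$ is compact, hence Menger, so $f$ is a nearly perfect map; as $Y$ is a $P$-space, Theorem~\ref{TN2} yields $f(N(X))=N(Y)$. Because $N(X)$ is closed in $X$ and $f$ is closed, the restriction $f|_{N(X)}\colon N(X)\to N(Y)$ is a continuous closed surjection, and a continuous closed bijection onto its image is a homeomorphism. Thus everything reduces to showing that $f|_{N(X)}$ is \emph{injective}, and this is exactly where the cross section must enter.

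Next I would build a continuous section of the extension. Fix compactifications $bX$, $cY$ and the continuous extension $\tilde f\colon bX\to cY$ witnessing that $f$ is meshing, so that $\tilde f|_{bX\setminus X}\colon bX\setminus X\to cY\setminus Y$ is a homeomorphism; call its inverse $h$. Since $\tilde f(X)=Y$ and $\tilde f(bX\setminus X)=cY\setminus Y$, one has $\tilde f^{-1}(Y)=X$ and, for each $w\in cY\setminus Y$, the fiber $\tilde f^{-1}(w)=\{h(w)\}$ is a \emph{singleton}. Viewing the cross section as a continuous map $g\colon Y\to bX$ into a compact Hausdorff space, extend it to $\beta g\colon\beta Y\to bX$, and let $\pi\colon\beta Y\to cY$ be the canonical map of Theorem~\ref{TN1}, which keeps $Y$ pointwise fixed and is a quotient map. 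As $\tilde f\circ\beta g$ and $\pi$ both keep $Y$ fixed and agree on the dense set $Y$, they coincide on all of $\beta Y$; hence for $w\in cY\setminus Y$ and any $s\in\pi^{-1}(w)\subseteq\beta Y\setminus Y$ we get $\beta g(s)\in\tilde f^{-1}(w)=\{h(w)\}$, so $\beta g$ is constant on every fiber of $\pi$. Therefore $\beta g$ factors through the quotient $\pi$ as a continuous map $\Phi\colon cY\to bX$ with $\Phi|_Y=g$, $\Phi|_{cY\setminus Y}=h$, and $\tilde f\circ\Phi=\mathrm{id}_{cY}$.

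Finally I would close the argument. Being a continuous section, $\Phi$ is injective, and since $cY$ is compact and $bX$ is Hausdorff it is an embedding with closed image $\Phi(cY)$. Because $bX\setminus X=h(cY\setminus Y)=\Phi(cY\setminus Y)$, the closed set $\Phi(cY)$ contains $\overline{bX\setminus X}$, and by Proposition~\ref{PN1} every $x\in N(X)$ lies in $\overline{bX\setminus X}\subseteq\Phi(cY)$. Writing $x=\Phi(w)$ and applying $\tilde f$ gives $w=\tilde f(x)=f(x)\in Y$, whence $x=\Phi(w)=g(f(x))$. Consequently, if $x_1,x_2\in N(X)$ satisfy $f(x_1)=f(x_2)$, then $x_1=g(f(x_1))=g(f(x_2))=x_2$, so $f|_{N(X)}$ is injective, completing the proof. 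The main obstacle is precisely the continuity of the section $\Phi$ over the compactification $cY$: gluing $g$ on $Y$ to $h$ on $cY\setminus Y$ is not obviously continuous, and the decisive device is to pass through $\beta Y$ and factor $\beta g$ through $\pi$, which succeeds only because the meshing homeomorphism makes the $\tilde f$-fibers over remainder points singletons.
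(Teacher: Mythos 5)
Your proof is correct and takes essentially the same route as the paper: both extend the cross section over $\beta Y$ into $bX$ (Lemma~\ref{LN4}), identify $\tilde f$ composed with that extension as the canonical map $\beta Y\to cY$ of Theorem~\ref{TN1}, and combine the meshing homeomorphism on remainders with Proposition~\ref{PN1} and compactness to conclude $N(X)\subseteq g(Y)$, where $f$ is injective. The only difference is organizational: you package the endgame as a continuous section $\Phi$ of $\tilde f$ over $cY$ obtained by factoring through the quotient $\pi$, whereas the paper reaches the same conclusion by the direct set computation $\tilde g(\beta Y\setminus Y)=bX\setminus X$ and $\tilde g(\beta Y)\cap X=g(Y)$.
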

\begin{proof}
By Theorem~\ref{TN2}, $f|_{N(X)}$ is a closed continuous mapping onto $N(Y)$. We now show that $f|_{N(X)}$ is injective. Since $f$ is meshing, we get compactifications $bX$ of $X$ and $cY$ of $Y$ and an extension $\tilde{f}$ over $bX$ onto $cY$ such that $\tilde{f}|_{bX\setminus X}$ is a homeomorphism onto $cY\setminus Y$. Let $g$ be a cross section of $f$. Then by Lemma~\ref{LN4}, $g$ has a continuous extension $\tilde{g}$ over $\beta Y$ into $bX$. Since $f\circ g$ is the identity map of $Y$ onto itself, by Theorem~\ref{TN1}, we have $\tilde{f}\circ\tilde{g}(\beta Y\setminus Y)=cY\setminus Y$ and consequently $\tilde{f}\circ\tilde{g}$ maps $\beta Y$ onto $cY$. Now $cY\setminus Y=\tilde{f}(\tilde{g}(\beta Y\setminus Y))$ gives $bX\setminus X\subseteq\tilde{g}(\beta\setminus Y)\subseteq\tilde{g}(\beta Y)$ as $\tilde{f}|_{bX\setminus X}$ maps $bX\setminus X$ bijectively $cY\setminus Y$. Obviously $bX\setminus X\subseteq\tilde{g}(\beta Y)$ and hence $N(X)\subseteq\tilde{g}(\beta Y)\cap X$ as $N(X)\subseteq\overline{bX\setminus X}$.

Next we claim that $\tilde{g}(\beta Y)\cap X=g(Y)$. Again using $\tilde{f}\circ\tilde{g}(\beta Y\setminus Y)=cY\setminus Y$ we obtain $\tilde{g}(\beta Y\setminus Y)=bX\setminus X$ and so $\tilde{g}(\beta Y)\cap X=\tilde{g}((\beta Y\setminus Y)\cup Y)\cap X=(\tilde{g}(\beta Y\setminus Y)\cup\tilde{g}(Y))\cap X=(\tilde{g}(\beta Y\setminus Y)\cap X)\cup(\tilde{g}(Y)\cap X)=((bX\setminus X)\cap X)\cup(g(Y)\cap X)=g(Y)$.

Since $g$ is a cross section of $f$, $f$ is injective on $g(Y)$ and so on $N(X)$. Hence $f|_{N(X)}$ is a homeomorphism onto $N(Y)$.
\end{proof}

\begin{Prop}
\label{PN3}
If $X=K\times Y$ with $K$ is locally $\sigma$-compact, then $N(X)=K\times N(Y)$.
\end{Prop}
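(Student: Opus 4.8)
The plan is to prove the two set inclusions separately, using the openness of the coordinate projection $p_2:K\times Y\to Y$ for one direction and the preservation of the Menger property under products with $\sigma$-compact spaces for the other. Throughout I recall that a point lies outside $N(\cdot)$ precisely when the space is locally Menger there, that every $\sigma$-compact space is Menger, and that locally $\sigma$-compact spaces are locally $\sigma$-compact at each point in the strong sense of the definition (open set inside a $\sigma$-compact subspace).

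First I would establish $N(X)\subseteq K\times N(Y)$, equivalently that $X=K\times Y$ is locally Menger at every point $(a,b)$ with $b\notin N(Y)$. Since $K$ is locally $\sigma$-compact, there exist an open set $U_1$ in $K$ and a $\sigma$-compact subspace $M_1$ of $K$ with $a\in U_1\subseteq M_1$; since $Y$ is locally Menger at $b$, there exist an open set $U_2\ni b$ in $Y$ and a Menger subspace $M_2$ of $Y$ with $b\in U_2\subseteq M_2$. Then $U_1\times U_2$ is an open neighbourhood of $(a,b)$ contained in $M_1\times M_2$, and $M_1\times M_2$ is Menger because the product of a $\sigma$-compact space and a Menger space is Menger \cite[Proposition 4.7]{CST}. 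Hence $X$ is locally Menger at $(a,b)$, so $(a,b)\notin N(X)$, which is the desired inclusion.

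For the reverse inclusion $K\times N(Y)\subseteq N(X)$ I would argue by contraposition: if $X$ is locally Menger at $(a,b)$, then $Y$ is locally Menger at $b$. Choose an open set $U\ni(a,b)$ and a Menger subspace $M$ of $X$ with $(a,b)\in U\subseteq M$, and let $p_2:X\to Y$ be the projection. Because $p_2$ is open and continuous, $p_2(U)$ is open in $Y$, contains $b$, and satisfies $p_2(U)\subseteq p_2(M)$; moreover $p_2(M)$ is Menger as a continuous image of a Menger space. Thus $b\in p_2(U)\subseteq p_2(M)$ exhibits $Y$ as locally Menger at $b$, i.e. $b\notin N(Y)$. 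This is exactly the contrapositive of $K\times N(Y)\subseteq N(X)$ (for every $a\in K$).

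Combining the two inclusions yields $N(X)=K\times N(Y)$. The only place the hypothesis on $K$ is used is the first inclusion, through the product theorem for Menger and $\sigma$-compact spaces; the second inclusion needs nothing about $K$ beyond the openness of $p_2$. I do not expect a serious obstacle here: the proof is a direct point-by-point argument, and the one thing to get right is to invoke the correct preservation facts—openness of the projection and Mengerness of the product of a $\sigma$-compact space with a Menger space—rather than any delicate covering construction. A minor degenerate case (either factor empty) should be noted but is immediate, since then both sides are empty.
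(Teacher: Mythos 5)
Your proof is correct. The paper states Proposition~\ref{PN3} without any proof, so there is no argument of the authors to compare against; your two-inclusion verification---using the product theorem of \cite{CST} ($\sigma$-compact $\times$ Menger is Menger) for $N(X)\subseteq K\times N(Y)$, and the openness of the projection together with preservation of the Menger property under continuous images for $K\times N(Y)\subseteq N(X)$---is the natural argument and fills the gap correctly. As a point of contact with the text, your second inclusion (which, as you note, uses nothing about $K$) is exactly the fact the paper later invokes as ``easy to see'' in the proof of Proposition~\ref{PN5}.
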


\begin{Prop}
\label{PN4}
Let $X$ be a non-compact Menger space containing at least two points and $Y$ be a $P$-space that is not locally Menger, then the projection mapping $p:X\times Y\to Y$ is a nearly perfect mapping but not perfect (hence not meshing).
\end{Prop}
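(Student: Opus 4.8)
The plan is to verify the three defining ingredients of ``nearly perfect'' directly and then to rule out perfectness by a fibre computation. The projection $p\colon X\times Y\to Y$ is continuous by definition of the product topology, and it is surjective because $X\neq\emptyset$. The two substantive points are that $p$ is closed (which, together with Menger fibres, yields nearly perfect) and that its fibres fail to be compact (which yields not perfect, and hence not meshing).

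First I would establish closedness, which I expect to be the only place where the hypotheses genuinely interact and thus the step carrying the weight of the argument. Since every Menger space is Lindel\"of, $X$ is Lindel\"of; as $Y$ is a $P$-space, Lemma~\ref{LL1} applies verbatim to the projection $X\times Y\to Y$ and shows that $p$ is closed. Everything after this is bookkeeping with the definitions.

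Next I would compute the fibres. For each $y\in Y$ the fibre $p^{-1}(y)=X\times\{y\}$ is homeomorphic to $X$ (via the first projection restricted to it), hence is a Menger subspace of $X\times Y$. Combined with the closedness and surjectivity just noted, this shows that $p$ is a nearly perfect mapping. On the other hand, $p^{-1}(y)\cong X$ is \emph{not} compact precisely because $X$ is non-compact, so no fibre of $p$ is compact and therefore $p$ is not perfect. Invoking the observation (recorded after Definition~\ref{DN1}) that every meshing map is perfect, we conclude that $p$ is not meshing either.

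Finally I would comment on the remaining hypotheses to make the logical structure transparent. The assumptions that $X$ is non-compact and has at least two points serve only to guarantee that the fibres $X\times\{y\}$ are genuinely non-compact, which is what defeats perfectness. The hypothesis that $Y$ is a $P$-space that is \emph{not} locally Menger is not used in the closedness or fibre arguments; its purpose is to make the example meaningful, ensuring through the product considerations of Section~4 that $X\times Y$ itself is not locally Menger, so that $p$ realizes a nearly perfect but non-meshing map issuing from a space which is not locally Menger.
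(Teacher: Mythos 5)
Your proof is correct and follows essentially the same route as the paper's: closedness of $p$ via Lemma~\ref{LL1} (using that Menger implies Lindel\"{o}f), the fibres $p^{-1}(y)=X\times\{y\}$ being Menger to get nearly perfect, and their non-compactness to defeat perfectness (hence meshing). One caveat on your closing commentary only: the hypothesis that $Y$ is a $P$-space \emph{is} used in the closedness step (it is exactly what Lemma~\ref{LL1} requires, as your own second paragraph says), so it is only the ``not locally Menger'' clause that plays no role in the argument; likewise ``at least two points'' is not what makes the fibres non-compact --- that hypothesis is idle here (it matters in the paper's Proposition~\ref{PN5}).
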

\begin{proof}
By Lemma~\ref{LL1}, $p$ is closed. For each $y\in Y$, $p^{-1}(y)=X\times\{y\}$ is Menger. Thus $p$ is nearly perfect. Since $X$ is not compact, $p^{-1}(y)$ is not compact for all $y\in Y$ and hence $p$ is not perfect.
\end{proof}

\begin{Prop}
\label{PN5}
Let $X$ be a Menger space containing at least two points and $Y$ be a $P$-space that is not locally Menger, then the projection mapping $p:X\times Y\to Y$ is a nearly perfect mapping but not meshing.
\end{Prop}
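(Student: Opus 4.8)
The plan is to establish the two assertions in turn: first that $p$ is nearly perfect, which is almost verbatim the argument of Proposition~\ref{PN4}, and then that $p$ fails to be meshing, where the possibility that $X$ is compact (so that $p$ is in fact perfect) rules out the route taken in Proposition~\ref{PN4} and forces a genuinely new argument. I would handle the second part by a contradiction built on the non-locally-Menger point set $N(\cdot)$ together with Theorem~\ref{TN3}.

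For the nearly perfect part, I would first recall that every Menger space is Lindel\"of, so $X$ is Lindel\"of; since $Y$ is a $P$-space, Lemma~\ref{LL1} yields that the projection $p:X\times Y\to Y$ is closed. Each fiber $p^{-1}(y)=X\times\{y\}$ is homeomorphic to $X$ and hence Menger, so $p$ is a nearly perfect mapping by definition.

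For the failure of meshing, the plan is as follows. I would first note that $p$ admits a cross section: since $|X|\geq 2$ we may fix a point $x_0\in X$ and set $g(y)=(x_0,y)$, which is continuous and satisfies $p\circ g=\mathrm{id}_Y$. Next, since $Y$ is not locally Menger we have $N(Y)\neq\emptyset$, and I would verify the inclusion $X\times N(Y)\subseteq N(X\times Y)$: if $X\times Y$ were locally Menger at some point $(x,y)$ with $y\in N(Y)$, pick an open set $U\ni(x,y)$ and a Menger subspace $Z$ with $U\subseteq Z$; applying the open continuous projection $p$ gives that $p(U)$ is open with $y\in p(U)\subseteq p(Z)$ and $p(Z)$ is Menger (continuous images of Menger spaces are Menger), so $Y$ would be locally Menger at $y$, a contradiction.

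Finally, suppose for contradiction that $p$ is meshing. Since $Y$ is a $P$-space and $p$ has a cross section, Theorem~\ref{TN3} would force $p|_{N(X\times Y)}$ to be a homeomorphism onto $N(Y)$, in particular injective. But choosing $y\in N(Y)$ and two distinct points $x_1\neq x_2$ of $X$, the points $(x_1,y)$ and $(x_2,y)$ both lie in $N(X\times Y)$ by the inclusion above, while $p(x_1,y)=y=p(x_2,y)$, contradicting injectivity; hence $p$ cannot be meshing. The only steps requiring care are the inclusion $X\times N(Y)\subseteq N(X\times Y)$ and the confirmation that the hypotheses of Theorem~\ref{TN3} hold, and I expect these to be the main (though modest) obstacles, since everything else follows directly from the cited results.
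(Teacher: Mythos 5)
Your proof is correct and takes essentially the same route as the paper's: near perfectness via Lemma~\ref{LL1} (closedness of $p$ since $X$ is Lindel\"{o}f and $Y$ is a $P$-space) together with the Menger fibers $X\times\{y\}$, and then a contradiction obtained from the cross section, Theorem~\ref{TN3}, and the inclusion $X\times N(Y)\subseteq N(X\times Y)$. You simply spell out the details the paper leaves implicit, namely the explicit cross section $g(y)=(x_0,y)$, the verification of the inclusion, and the role of $|X|\geq 2$ and $N(Y)\neq\emptyset$ in defeating injectivity.
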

\begin{proof}
Observe that $p$ is a nearly perfect mapping. We now show that $p$ is not meshing. Suppose that $p$ is meshing. Since every projection mapping has a cross section, by Theorem~\ref{TN3}, we can say that $p^\prime=p|_{N(X\times Y)}:N(X\times Y)\to N(Y)$ is a homeomorphism. Also it is easy to see that $X\times N(Y)\subseteq N(X\times Y)$. Thus $p^\prime|_{X\times N(Y)}:X\times N(Y)\to N(Y)$ is injective, which is a contradiction. Hence $p$ is not meshing.
\end{proof}

\begin{Th}
\label{TN4}
If $f$ is a nearly perfect mapping from a Hausdorff $P$-space $X$ onto a space $Y$ and $A$ is a subset of $X$, then $f|_A$ is a nearly perfect mapping onto $f(A)$ if and only if $A=f^{-1}(f(A))\cap\overline{A}$.
\end{Th}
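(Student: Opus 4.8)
The plan is to write $B=f(A)$ and treat the two implications separately, observing at the outset that $A\subseteq f^{-1}(B)\cap\overline{A}$ always holds; thus in each direction only the reverse inclusion (together with the two defining properties of near-perfectness) is at stake. Throughout I would use that the Menger property implies Lindel\"of and is hereditary for closed subspaces, together with Lemma~\ref{LL201}.

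For the easier implication I would assume $A=f^{-1}(B)\cap\overline{A}$ and verify that $f|_A\colon A\to B$ is nearly perfect. For the fibers, note that for $y\in B$ one has $(f|_A)^{-1}(y)=A\cap f^{-1}(y)=\overline{A}\cap f^{-1}(y)$, since $f^{-1}(y)\subseteq f^{-1}(B)$ forces $\overline{A}\cap f^{-1}(y)\subseteq f^{-1}(B)$; this is a closed subset of the Menger fiber $f^{-1}(y)$, hence Menger. For closedness, I would use that $\overline{A}$ is closed and $f$ is closed, so $f|_{\overline{A}}\colon\overline{A}\to Y$ is closed; applying Lemma~\ref{LL201} to this map with the subspace $B\subseteq Y$ gives that the restriction to $(f|_{\overline{A}})^{-1}(B)=\overline{A}\cap f^{-1}(B)=A$ is closed, i.e. $f|_A\colon A\to B$ is closed. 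Hence $f|_A$ is nearly perfect.

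For the converse I would assume $f|_A\colon A\to B$ is nearly perfect, take $x\in f^{-1}(B)\cap\overline{A}$ with $y:=f(x)\in B$, and argue by contradiction from $x\notin A$. The fiber $P:=(f|_A)^{-1}(y)=A\cap f^{-1}(y)$ is then Menger, hence Lindel\"of, and $x\notin P$. The crucial step, which I expect to be the main obstacle, is to separate the Lindel\"of set $P$ from the single point $x$ by disjoint open sets; this is exactly where the Hausdorff $P$-space hypothesis enters. For each $p\in P$ choose by Hausdorffness disjoint open $U_p\ni p$ and $W_p\ni x$; cover $P$ by the $U_p$ and, by Lindel\"ofness, extract a countable subcover $\{U_{p_n}\}$. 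Then $U^*:=\cup_n U_{p_n}$ is open with $P\subseteq U^*$, while $W^*:=\cap_n W_{p_n}$ is open precisely because $X$ is a $P$-space, contains $x$, and satisfies $U^*\cap W^*=\emptyset$ (as $W^*\subseteq W_{p_n}$ is disjoint from $U_{p_n}$ for every $n$).

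To finish I would feed $U:=U^*\cap A$, an open subset of $A$ containing $P=(f|_A)^{-1}(y)$, into the closedness of $f|_A$ to obtain an open set $V$ in $B$ with $y\in V$ and $(f|_A)^{-1}(V)\subseteq U$ (concretely $V=B\setminus(f|_A)(A\setminus U)$), and write $V=B\cap V'$ with $V'$ open in $Y$. Since $x\in W^*\cap f^{-1}(V')$, which is open, and $x\in\overline{A}$, there is a point $a\in A$ in this neighbourhood; then $f(a)\in V'\cap B=V$ forces $a\in(f|_A)^{-1}(V)\subseteq U\subseteq U^*$, while $a\in W^*$ contradicts $U^*\cap W^*=\emptyset$. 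This contradiction gives $x\in A$, so $f^{-1}(B)\cap\overline{A}\subseteq A$ and hence $A=f^{-1}(f(A))\cap\overline{A}$, completing the proof.
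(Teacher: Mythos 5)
Your proof is correct, but it is organised quite differently from the paper's. The paper proceeds in three stages: it first proves the theorem for \emph{dense} $A\subseteq X$ (where the forward implication yields the stronger conclusion $A=f^{-1}(f(A))$, via the fact that the Menger fiber $f|_A^{-1}(y)$ is Lindel\"{o}f and hence \emph{closed} in the Hausdorff $P$-space $X$, a separation argument, and density of $A$); it then observes the closed case is trivial; finally it handles general $A$ by factoring through the nearly perfect map $f|_{\overline{A}}$ and applying the dense case to $A\subseteq\overline{A}$. You instead give a single unified argument for arbitrary $A$: in the backward direction you get Menger fibers from closed-heredity of the Menger property and closedness from Lemma~\ref{LL201} applied to $f|_{\overline{A}}$, and in the forward direction you run a direct contradiction, separating the Lindel\"{o}f fiber $P$ from a hypothetical point $x\in (f^{-1}(f(A))\cap\overline{A})\setminus A$ by disjoint open sets $U^*$ and $W^*$ (this is exactly where Hausdorff plus the $P$-space property enter, and you make that explicit, whereas the paper invokes the same separation fact implicitly), then using the tube property of the closed map $f|_A$ together with $x\in\overline{A}$ to produce a point of $A$ lying in both $U^*$ and $W^*$. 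What your route buys is economy and transparency: no dense/closed/general case split, and the role of each hypothesis is visible at the step where it is used; it also sidesteps some slips in the paper's dense-case argument (e.g.\ the displayed ``$f^{-1}(V)\cap(X\setminus\overline{U})\cap A=\emptyset$'' there should read $\neq\emptyset$). What the paper's route buys is the isolated intermediate statement that for dense $A$, near-perfectness of $f|_A$ forces full saturation $A=f^{-1}(f(A))$, which is of some independent interest.
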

\begin{proof}
Let $A$ be a dense subset of $X$. Suppose that $f|_A$ is nearly perfect onto $f(A)$. We now show that $A=f^{-1}(f(A))$. Let $x\in X\setminus A$ and $y\in f(A)$ be arbitrarily chosen. Then $f|_A^{-1}=f^{-1}(A)\cap A$ is Menger and hence closed. It follows that there exists an open set $U$ in $A$ such that $f|_A^{-1}(y)\subseteq U$ and $x\notin\overline{U}$. Subsequently $f(A\setminus U)$ is a closed subset of $f(A)$ not containing $y$. We claim that $x\in\overline{f(A\setminus U)}$. Let $V$ be an open set in $Y$ containing $f(x)$. Then $f^{-1}(V)\cap(X\setminus\overline{U})$ is an open set in $X$ containing $x$ and so $f^{-1}(V)\cap(X\setminus\overline{U})\cap A=\emptyset$. This gives $f^{-1}(V)\cap (A\setminus U)\neq\emptyset$ i.e. $V\cap f(A\setminus U)\neq\emptyset$ and consequently $f(x)\in\overline{f(A\setminus U)}$. It is clear that $y\notin\overline{f(A\setminus U)}$ as $f(A\setminus U)=\overline{f(A\setminus U)}^{f(A)}=\overline{f(A\setminus U)}\cap f(A)$. Thus $y\neq f(x)$ and hence $A=f^{-1}(f(A))$ i.e. $A=f^{-1}(f(A))\cap\overline{A}$.

Conversely suppose that $A=f^{-1}(f(A)$ i.e. $A=f^{-1}(f(A))$. Let $F$ be a closed subset of $A$. Then $F=C\cap A$ for some closed $C$ in $X$. We claim that $f|_A(F)=f(C)\cap f(A)$. Let $y\in f(C)\cap f(A)$. Since $A=f^{-1}(f(A))$, $f^{-1}(y)\subseteq A$. Then $f^{-1}(y)\subseteq A$ and $y\in f(C)$ give a $x\in C\cap A=F$ such that $y=f(x)$ i.e. $y\in f|_A(F)$. Thus $f|_A(F)=f(C)\cap f(A)$. Therefore $f|_A$ is closed and so $f|_A$ is nearly perfect onto $f(A)$. Hence the result holds for any dense subset of $X$.

For any closed subset $A$ of $X$, obviously $f|_A$ is nearly perfect onto $f(A)$, and $A=f^{-1}(f(A))\cap\overline{A}$. Finally let $A$ be any subset of $X$ and assume that $f|_A$ is nearly perfect onto $f(A)$. If we consider $f|_A$ as the restriction to $A$ of the nearly perfect mapping $f|_{\overline{A}}$, then we can obtain $A=f^{-1}(f(A))\cap\overline{A}$. Conversely let $A=f^{-1}(f(A))\cap\overline{A}$. Then $f|_{\overline{A}}$ is nearly perfect. Since $A$ is dense in $\overline{A}$, we get $f|_{\overline{A}}^{-1}(f|_{\overline{A}}(A))=f^{-1}(f(A))\cap\overline{A}=A$ and hence from the above $g|_A=f|_A$, where $g=f|_{\overline{A}}$, is nearly perfect onto $f(A)$. This completes the proof.
\end{proof}
Finally we obtain the following.
\begin{Th}
\label{TN5}
If $f$ is a meshing map from a space $X$ onto a space $Y$, then for any pair $x_1, x_2$ of distinct points of $N(X)$ such that $f(x_1)=f(x_2)$ there exist neighbourhoods $U_1$ of $x_1$ and $U_2$ of $x_2$ in $\beta X$ such that $f_\beta(U_1)\cap f_\beta(U_2)$ is a compact subset of $Y$.
\end{Th}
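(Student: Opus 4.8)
The plan is to transfer the problem from $\beta X$ down to the meshing compactification $bX$, where injectivity of the extension on the remainder is available, and then push the conclusion back up. First I would fix the meshing data: compactifications $bX$ of $X$ and $cY$ of $Y$ together with a continuous extension $\tilde{f}:bX\to cY$ such that $\tilde{f}|_{bX\setminus X}$ is a homeomorphism onto $cY\setminus Y$, and I write $f_\beta:\beta X\to\beta Y$ for the Stone extension of $f$. Using Theorem~\ref{TN1} I obtain the canonical continuous surjections $\pi_X:\beta X\to bX$ and $\pi_Y:\beta Y\to cY$ that keep $X$ and $Y$ pointwise fixed and send remainders onto remainders. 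Since $\pi_Y\circ f_\beta$ and $\tilde{f}\circ\pi_X$ are both continuous extensions of $f$ to maps $\beta X\to cY$, the uniqueness in Lemma~\ref{LN4} yields the key identity $\pi_Y\circ f_\beta=\tilde{f}\circ\pi_X$, which is the device that will convert statements about $\tilde{f}$ into statements about $f_\beta$.

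Next I would separate the two points upstairs. As $x_1\neq x_2$ in the compact Hausdorff (hence normal) space $bX$, choose open sets $G_1\ni x_1$ and $G_2\ni x_2$ with $\overline{G_1}^{bX}\cap\overline{G_2}^{bX}=\emptyset$, and set $U_i=\pi_X^{-1}(\overline{G_i}^{bX})$. Each $U_i$ is a closed neighbourhood of $x_i$ in $\beta X$, since it contains the open set $\pi_X^{-1}(G_i)\ni x_i$; being closed in the compact space $\beta X$ it is compact, so each $f_\beta(U_i)$ is compact and $f_\beta(U_1)\cap f_\beta(U_2)$ is a compact subset of $\beta Y$. Thus the only thing left to verify is that this intersection is contained in $Y$.

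The heart of the argument is showing the intersection misses $\beta Y\setminus Y$, and this is exactly where the meshing hypothesis is used. Because $f_\beta|_X=f$ takes values in $Y$, any $q\in f_\beta(U_i)$ lying in $\beta Y\setminus Y$ must be of the form $f_\beta(p)$ with $p\in U_i\cap(\beta X\setminus X)$. Suppose such a $q$ lay in both images, say $q=f_\beta(p_1)=f_\beta(p_2)$ with $p_i\in U_i\cap(\beta X\setminus X)$. Applying the identity $\pi_Y\circ f_\beta=\tilde{f}\circ\pi_X$ gives $\tilde{f}(\pi_X(p_1))=\tilde{f}(\pi_X(p_2))$, where $\pi_X(p_i)\in\overline{G_i}^{bX}\cap(bX\setminus X)$. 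Since the closures $\overline{G_1}^{bX}$ and $\overline{G_2}^{bX}$ are disjoint we have $\pi_X(p_1)\neq\pi_X(p_2)$, and both lie in $bX\setminus X$; this contradicts the injectivity of $\tilde{f}|_{bX\setminus X}$ supplied by meshing. Hence $f_\beta(U_1)\cap f_\beta(U_2)\subseteq Y$, and being compact it is a compact subset of $Y$, as required.

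I expect the last step — ruling out remainder points in the intersection — to be the only real obstacle; the compactness is then automatic from taking closed neighbourhoods and passing to continuous images in the Hausdorff space $\beta Y$. The efficiency of the approach lies in doing the separation of $x_1$ and $x_2$ in $bX$ rather than directly in $\beta X$, so that the disjointness of the closed traces on $bX\setminus X$ can be combined with the meshing injectivity of $\tilde{f}$. I note that the hypothesis $x_1,x_2\in N(X)$ enters only through Proposition~\ref{PN1}, which places the $x_i$ in $\overline{\beta X\setminus X}$ so that the chosen neighbourhoods genuinely meet the remainder; the compactness conclusion itself requires only that $x_1\neq x_2$ share a common image under $f$.
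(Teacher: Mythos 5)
Your proposal is correct and follows essentially the same route as the paper's own proof: both obtain the canonical projections $\pi_X=g_b$, $\pi_Y=g_c$ from Theorem~\ref{TN1}, establish the commuting identity $\pi_Y\circ f_\beta=\tilde{f}\circ\pi_X$ via the uniqueness in Lemma~\ref{LN4}, take $U_i$ to be preimages under $\pi_X$ of disjoint closed neighbourhoods of $x_1,x_2$ in $bX$, and rule out remainder points in $f_\beta(U_1)\cap f_\beta(U_2)$ using the injectivity of $\tilde{f}$ on $bX\setminus X$. The only cosmetic difference is that you argue by contradiction directly upstairs while the paper first computes $\tilde{f}(V_1)\cap\tilde{f}(V_2)$ downstairs and transfers back via $g_c$; your closing remark that the hypotheses $x_i\in N(X)$ and $f(x_1)=f(x_2)$ are never actually invoked is accurate for the paper's argument as well.
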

\begin{proof}
Since $f$ is a meshing, we obtain compactifications $bX$ of $X$ and $cY$ of $Y$ and an extension $\tilde{f}$ from $bX$ onto $cY$ which maps $bX\setminus X$ homeomorphically onto $cY\setminus Y$. By Theorem~\ref{TN1}, there exist mappings $g_b$ from $\beta X$ onto $bX$ and $g_c$ from $\beta Y$ onto $cY$ keeping $X$ and $Y$ pointwise fixed respectively and sending $\beta X\setminus X$ onto $bX\setminus X$ and $\beta Y\setminus Y$ onto $cY\setminus Y$ respectively. Since both $\tilde{f}\circ g_b$ and $g_c\circ f_\beta$ send $\beta X$ onto $cY$, and coincide with $f$ on $X$, by Lemma~\ref{LN4}, $\tilde{f}\circ g_b=g_c\circ f_\beta$. Let $V_1$ and $V_2$ be disjoint closed neighbourhoods in $bX$ of the distinct points $x_1$ and $x_2$ of $N(X)$ respectively. Choose $U_1=g_b^{-1}(V_1)$ and $U_2=g_b^{-1}(V_2)$. Then $U_1$ and $U_2$ are neighbourhoods of $x_1$ and $x_2$ in $\beta X$ respectively, and $(\tilde{f}\circ g_b (U_1))\cap(\tilde{f}\circ g_b(U_2))=\tilde{f}(V_1)\cap\tilde{f}(V_2)$ is compact. Since $\tilde{f}$ is injective on $bX\setminus X$, $(\tilde{f}\circ g_b (U_1))\cap(\tilde{f}\circ g_b(U_2))$ is a compact subset of $Y$. Clearly $(\tilde{f}\circ g_b (U_1))\cap(\tilde{f}\circ g_b(U_2))=(g_c\circ f_\beta(U_1))\cap(g_c\circ f_\beta(U_2))$ and since $g_c$ maps $\beta Y\setminus Y$ onto $cY\setminus Y$, it follows that $f_\beta(U_1)\cap f_\beta(U_2)$ is a compact subset of $Y$.
\end{proof}

\section{Concluding Remarks}
The investigation of the locally Menger property may further be continued to study separation properties. We first observe that if $L$ is a Lindel\"{o}f subspace of a Hausdorff $P$-space $X$ and $x_0\notin L$, then there exist two  open sets $U$ and $V$ such that $L\subseteq U$, $x_0\in V$ and $U\cap V=\emptyset$. A similar investigations may be carried out in the context of locally Menger spaces.


A point and a Lindel\"{o}f subspace can be separated by Menger spaces.
\begin{Prop}
\label{T18}
Let $X$ be a regular locally Menger  space. For each $x$ and each Lindel\"{o}f subspace $L$ of $X$ not containing $x$, there exist a closed Menger subspace $Y$ and a Menger subspace $Z$ such that $x\in Y$, $L\subseteq Z$ and $Y\cap Z=\emptyset$.
\end{Prop}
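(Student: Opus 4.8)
The plan is to take the closed Menger subspace around the point to be the singleton $Y=\{x\}$ itself, and to place all the work into constructing the Menger subspace $Z\supseteq L$ that avoids $x$. Since $X$ is regular (hence $T_1$), the set $\{x\}$ is closed, and a one-point space is compact and therefore $\sigma$-compact, hence Menger; so $Y=\{x\}$ is already a closed Menger subspace with $x\in Y$. It is worth stressing why one should not hope for $Y$ to be a neighbourhood of $x$: as $L$ need not be closed, it is permitted that $x\in\overline{L}$, and then every neighbourhood of $x$ meets $L\subseteq Z$ and cannot be disjoint from $Z$. Thus the singleton is the only robust choice, and the entire content of the proposition lies in producing $Z$.

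To construct $Z$, I would separate $x$ from each point of $L$ by means of the regular reformulation of local Mengerness. Fix $\ell\in L$. Since $\ell\neq x$ and $\{x\}$ is closed, the set $X\setminus\{x\}$ is open and contains $\ell$; applying Theorem~\ref{T7}, condition $(2)$ (available because $X$ is regular and locally Menger), one obtains an open set $U_\ell$ and a Menger subspace $Y_\ell$ of $X$ with $\ell\in U_\ell\subseteq Y_\ell\subseteq X\setminus\{x\}$, so in particular $x\notin Y_\ell$. The family $\{U_\ell:\ell\in L\}$ is a cover of $L$ by open sets of $X$, and since $L$ is Lindel\"of there is a countable subfamily $\{U_{\ell_n}:n\in\mathbb{N}\}$ with $L\subseteq\bigcup_{n\in\mathbb{N}}U_{\ell_n}$. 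Setting $Z=\bigcup_{n\in\mathbb{N}}Y_{\ell_n}$, we get $L\subseteq Z$, and $Z$ is Menger because the Menger property is preserved under countable unions. Moreover $x\notin Y_{\ell_n}$ for every $n$, whence $x\notin Z$; this yields $Y\cap Z=\{x\}\cap Z=\emptyset$ and finishes the argument.

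The only genuine obstacle is keeping $x$ outside $Z$ while still covering $L$, and this is precisely where regularity is indispensable. The bare definition of local Mengerness would only furnish, for each $\ell$, a Menger neighbourhood $Y_\ell$ that might swallow $x$; it is Theorem~\ref{T7}$(2)$ that allows each witness to be shrunk inside the prescribed open set $X\setminus\{x\}$, forcing $x\notin Y_\ell$ and hence $x\notin Z$. Everything else---closedness of $\{x\}$, Mengerness of a point, the Lindel\"of reduction to a countable subcover, and the countable-union stability of the Menger property---is routine, so no delicate estimate is required beyond this single appeal to the regular reformulation.
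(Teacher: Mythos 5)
Your proof is correct. Note that the paper actually states Proposition~\ref{T18} without proof (it sits in the concluding remarks), so there is nothing to compare line by line; but your construction of $Z$ --- apply Theorem~\ref{T7}(2) at each $\ell\in L$ inside the open set $X\setminus\{x\}$, extract a countable subfamily by Lindel\"{o}fness of $L$, and use preservation of the Menger property under countable unions --- is exactly the technique the paper uses in its proved analogues (e.g.\ Proposition~\ref{T17}), so this is surely the intended argument. Your choice $Y=\{x\}$ is legitimate under the paper's conventions (regularity includes $T_1$, following Engelking, as confirmed by the paper's remark that a non-Hausdorff space cannot be regular), and your observation that $Y$ cannot in general be a neighbourhood of $x$ --- since $L$ need not be closed and $x$ may lie in $\overline{L}$, as with $L=(0,1)$ and $x=0$ in $\mathbb{R}$ --- correctly identifies why the statement is formulated with an arbitrary closed Menger set rather than a Menger neighbourhood, and why all the content resides in $Z$.
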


A similar situation can be described for locally Menger Hausdorff $P$-spaces.
\begin{Prop}
\label{T1}
Let $X$ be a locally Menger Hausdorff $P$-space. For each $x$ and each Lindel\"{o}f subset $L$ of $X$ not containing $x$, there exist two closed Menger subspaces $Y$ and $Z$ such that $x\in Y,\; L\subseteq Z$ and $Y\cap Z=\emptyset$.
\end{Prop}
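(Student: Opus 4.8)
The plan is to build the two subspaces separately: first enclose $L$ in a closed Menger set $Z$ that avoids $x$, and then extract $Y$ from the open complement of $Z$ using the closed Menger neighbourhood basis supplied by Theorem~\ref{T71}. Since $L$ is a Lindel\"of subspace of a Hausdorff $P$-space, it is closed in $X$; this fact will be used at the very end.

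To construct $Z$, I would argue pointwise over $L$. Fix $l\in L$. As $X$ is Hausdorff and $x\neq l$, choose disjoint open sets $A_l\ni x$ and $B_l\ni l$. Applying Theorem~\ref{T71} to the point $l$ and the open set $B_l$, I obtain an open set $W_l$ with $l\in W_l\subseteq\overline{W_l}\subseteq B_l$ and $\overline{W_l}$ Menger. Because $\overline{W_l}\subseteq B_l$ and $A_l\cap B_l=\emptyset$, the point $x$ lies outside $\overline{W_l}$. The family $\{W_l:l\in L\}$ is an open cover of the Lindel\"of space $L$, so it admits a countable subcover $\{W_{l_n}:n\in\mathbb{N}\}$. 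Setting $Z=\bigcup_{n\in\mathbb{N}}\overline{W_{l_n}}$, I get $L\subseteq Z$, and $Z$ is Menger as a countable union of Menger spaces. Moreover $x\notin\overline{W_{l_n}}$ for every $n$, hence $x\notin Z$.

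The essential point now is that $Z$ is closed: in a $P$-space a countable union of closed sets is closed, so $Z=\bigcup_{n\in\mathbb{N}}\overline{W_{l_n}}$ is closed in $X$. Consequently $X\setminus Z$ is an open set containing $x$, and a final application of Theorem~\ref{T71} (to $x$ and $X\setminus Z$) yields an open $W$ with $x\in W\subseteq\overline{W}\subseteq X\setminus Z$ and $\overline{W}$ Menger. Then $Y=\overline{W}$ is a closed Menger subspace with $x\in Y$ and $Y\cap Z=\emptyset$, which completes the construction.

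The main obstacle is arranging that the superset of $L$ be simultaneously closed, Menger, and disjoint from $x$; this is exactly where the $P$-space hypothesis does the work, first through the closed Menger neighbourhood basis of Theorem~\ref{T71} and then through the closure of countable unions of closed sets (which fails in the merely regular setting of Proposition~\ref{T18}, where $Z$ cannot in general be taken closed). I note a shortcut that bypasses the construction of $Z$ altogether: the identity map of $L$ is a nearly perfect map onto the Lindel\"of $P$-space $L$, so by Lemma~\ref{LN1} the space $L$ is itself Menger; since $L$ is also closed, one may simply take $Z=L$, and only the construction of $Y$ in the previous paragraph remains.
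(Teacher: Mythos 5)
Your proof is correct. Note that the paper states Proposition~\ref{T1} without any proof at all (it is offered merely as the $P$-space analogue of Proposition~\ref{T18}, which is likewise unproved), so there is no official argument to diverge from; your construction --- closed Menger neighbourhoods from Theorem~\ref{T71} around points of $L$, a countable subcover by Lindel\"{o}fness, closedness of the countable union via the $P$-space property, and then one more application of Theorem~\ref{T71} inside $X\setminus Z$ --- is exactly the kind of argument the authors evidently intend, and every ingredient you invoke is available in the paper. One small justification worth making explicit: condition (2) of Theorem~\ref{T71} literally produces $l\in W_l\subseteq Y_l\subseteq B_l$ with $Y_l$ Menger, not $\overline{W_l}\subseteq B_l$; but since Menger subspaces of a Hausdorff $P$-space are Lindel\"{o}f and hence closed, one gets $\overline{W_l}\subseteq Y_l\subseteq B_l$ with $\overline{W_l}$ Menger for free (equivalently, quote condition (4), the basis of closed Menger neighbourhoods). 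Your closing shortcut is in fact the cleanest route and deserves to be the main proof: $L$ is a Lindel\"{o}f subspace of a $P$-space, hence itself a Lindel\"{o}f $P$-space, hence Menger by Lemma~\ref{LN1} applied to the identity map, and closed by the observation recalled in Section 3; so $Z=L$ works outright and only the construction of $Y$ remains.
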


Similarly a point and a compact subspace can be separated in a regular locally Menger spaces.
\begin{Prop}
\label{T19}
Let $X$ be a regular locally Menger space. For each $x$ and each compact subset $K$ of $X$ not containing $x$, there exist two closed Menger subspaces $Y$ and $Z$ such that $x\in Y,\; K\subseteq Z$ and $Y\cap Z=\emptyset$.
\end{Prop}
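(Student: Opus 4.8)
The plan is to follow the blueprint of Proposition~\ref{T18} for the Lindel\"of case, the crucial difference being that the compactness of $K$ delivers a \emph{finite} subcover in place of a merely countable one; this is exactly what promotes the separating set around $K$ from a bare Menger subspace to a \emph{closed} Menger subspace, matching the stronger conclusion demanded here. The two subspaces $Y$ (around $x$) and $Z$ (around $K$) will both be produced as closures of suitable open sets furnished by Theorem~\ref{T7}.

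First I would construct $Z$. Fix $x$ and the compact set $K$ with $x\notin K$. For each $y\in K$ the set $X\setminus\{x\}$ is open (singletons being closed) and contains $y$, so Theorem~\ref{T7} supplies an open set $V_y$ with $y\in V_y\subseteq\overline{V_y}\subseteq X\setminus\{x\}$ and $\overline{V_y}$ Menger; in particular $x\notin\overline{V_y}$. The family $\{V_y:y\in K\}$ covers $K$, and by compactness I extract a finite subcover $\{V_{y_1},\dots,V_{y_n}\}$. Setting $Z=\bigcup_{i=1}^{n}\overline{V_{y_i}}$ yields a set that is closed (a finite union of closed sets), Menger (a finite union of Menger subspaces, the Menger property being preserved under finite unions), contains $K$, and avoids $x$ since $x\notin\overline{V_{y_i}}$ for every $i$.

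Next I would construct $Y$. As $Z$ is closed and $x\notin Z$, the set $X\setminus Z$ is an open neighbourhood of $x$, so a second application of Theorem~\ref{T7} produces an open set $U$ with $x\in U\subseteq\overline{U}\subseteq X\setminus Z$ and $\overline{U}$ Menger. Putting $Y=\overline{U}$ gives a closed Menger subspace with $x\in Y$ and $Y\cap Z=\emptyset$, so $Y$ and $Z$ are the required closed Menger subspaces. The one place where the hypotheses genuinely cooperate is the first step: regularity (through Theorem~\ref{T7}) is precisely what lets me slide a closed Menger neighbourhood of each $y$ entirely off the point $x$, and the compactness of $K$ is what keeps the resulting union finite, hence closed. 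Beyond this observation I do not anticipate any real obstacle, the construction of $Y$ being a routine reapplication of Theorem~\ref{T7}.
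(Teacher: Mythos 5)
Your proof is correct. The paper in fact states Proposition~\ref{T19} without proof, and your argument is precisely the one its surrounding machinery suggests: the construction of $Z$ reproduces the paper's own proof of Proposition~\ref{T17} (covering $K$ by open sets with Menger closures inside $V=X\setminus\{x\}$, which is open because the paper's notion of regularity, following Engelking, includes $T_1$, and then taking a finite subcover), while $Y$ is obtained by one further application of Theorem~\ref{T7} to the open set $X\setminus Z$. The only point worth flagging is bookkeeping: the property you attribute to Theorem~\ref{T7} --- an open $V_y$ with $y\in V_y\subseteq\overline{V_y}\subseteq W$ and $\overline{V_y}$ Menger --- is really its condition (4) (a basis of closed Menger neighbourhoods), or equivalently condition (2) combined with regularity and closed-heredity of the Menger property; either reading makes your step legitimate.
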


The separation by a continuous function in locally Menger spaces can also be obtained.
\begin{Prop}
\label{T17}
Let $X$ be a regular locally Menger space. For each compact set $K$ and each open set $V$ containing $K$, there exists a closed Menger subspace $Y$ such that $K\subseteq Y\subseteq V$. Furthermore, there is a continuous function $f:X\to [0,1]$ satisfying $f(x)=0$ for all $x\in K$ and $f(x)=1$ for all $x\in X\setminus Y$.
\end{Prop}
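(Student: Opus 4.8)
The plan is to produce the closed Menger subspace $Y$ by a compactness argument, and then to build $f$ by applying Urysohn's lemma inside $Y$ and extending it by the constant $1$ on the outside. Throughout I use that $X$ regular (in the sense of Engelking, so $T_3$) makes compact sets closed, and that the Menger property is hereditary for closed subspaces and is preserved under finite unions.

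First I would construct $Y$. Since $X$ is regular and locally Menger, Theorem~\ref{T7} gives a basis of closed Menger neighbourhoods; hence for each $x\in K$, using that $x\in V$ and $V$ is open, I can pick a closed Menger set $N_x$ with $x\in\Int(N_x)\subseteq N_x\subseteq V$. (Equivalently one may invoke condition $(2)$ of Theorem~\ref{T7} and shrink the neighbourhood by regularity, the closure then being Menger as a closed subset of a Menger space.) The family $\{\Int(N_x):x\in K\}$ is an open cover of the compact set $K$, so it admits a finite subcover indexed by $x_1,\dots,x_n$. Setting $W=\bigcup_{i=1}^n\Int(N_{x_i})$ and $Y=\bigcup_{i=1}^n N_{x_i}$, the set $Y$ is closed, it is Menger as a finite union of Menger subspaces, and $K\subseteq W\subseteq Y\subseteq V$ with $W$ open. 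This settles the first assertion and, crucially, records the open buffer $W$ between $K$ and $Y$.

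Next I would produce the function. Being Menger, $Y$ is Lindel\"of, and as a subspace of the regular space $X$ it is regular; therefore $Y$ is normal, since every regular Lindel\"of space is normal. In $Y$ the sets $K$ and $Y\setminus W$ are disjoint closed subsets: $K$ is closed in $X$ (a compact set in a regular space is closed), hence closed in $Y$, and $K\subseteq W$ so $K\cap(Y\setminus W)=\emptyset$. By Urysohn's lemma applied in $Y$ there is a continuous $g:Y\to[0,1]$ with $g\equiv 0$ on $K$ and $g\equiv 1$ on $Y\setminus W$. I then define $f:X\to[0,1]$ by $f=g$ on $Y$ and $f\equiv 1$ on $X\setminus Y$, and verify continuity by the pasting lemma on the two closed sets $Y$ and $X\setminus\Int(Y)$, whose union is $X$. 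On $Y$ the map $f=g$ is continuous; on $X\setminus\Int(Y)$ the map $f$ is identically $1$, because $W$ is open and contained in $Y$, so $W\subseteq\Int(Y)$ and consequently $\partial Y=Y\setminus\Int(Y)\subseteq Y\setminus W$, where $g\equiv 1$, while $f\equiv 1$ on $X\setminus Y$ by definition. The two descriptions agree (both equal $1$) on the overlap $Y\cap(X\setminus\Int(Y))=\partial Y$, so $f$ is well defined and continuous, with $f\equiv 0$ on $K$ and $f\equiv 1$ on $X\setminus Y$, as required.

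I expect the main obstacle to be the continuity of the extension along the boundary of $Y$. The gluing succeeds precisely because of the open buffer $W$ squeezed between $K$ and $Y$: it forces $g$ to already equal $1$ on all of $\partial Y$, so the constant-$1$ extension matches $g$ there. Without first arranging $K\subseteq W\subseteq Y$ with $W$ open, the value of $g$ on $\partial Y$ need not be $1$ and the extension could be discontinuous. The secondary delicate point is the appeal to normality of $Y$, obtained from regular plus Lindel\"of, which is what lets me use Urysohn's lemma even though $X$ itself need not be normal.
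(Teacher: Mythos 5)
Your proposal is correct and follows essentially the same route as the paper: a finite union of closed Menger neighbourhoods (via Theorem~\ref{T7}, compactness of $K$, and preservation of Menger under finite unions) yields $Y$, then normality of the regular Lindel\"of space $Y$ allows Urysohn's lemma, and the function is extended by the constant $1$. The only cosmetic difference is that you make $g\equiv 1$ on $Y\setminus W$ while the paper uses $Y\setminus\Int(Y)$; since $W\subseteq\Int(Y)$, both choices force $g\equiv 1$ on the boundary of $Y$ and the pasting argument is identical.
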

\begin{proof}
Let $x\in K$. We can find an open set $U_x$ such that $x\in U_x\subseteq\overline{U}_x\subseteq V$. Then $\{U_x : x\in K\}$ is a cover of $K$ by open sets in $X$. Choose a finite subfamily $\{U_{x_1}, U_{x_2}, \ldots, U_{x_n}\}$ that covers $K$. Clearly $Y=\cup_{k=1}^n\overline{U}_{x_k}$ is a closed Menger (and hence normal) subspace of $X$ and $K\subseteq Y\subseteq V$.

Since $Y\setminus \Int(Y)$ and $K$ are disjoint closed subsets of $Y$, there is a continuous function $g:Y\to [0,1]$ such that $g(x)=0$ for all $x\in K$ and $g(x)=1$ for all $x\in Y\setminus \Int(Y)$.

Now the required continuous map $f:X\to [0,1]$ is given by
\begin{equation*}
f(x)=
\begin{cases}
g(x) & \text{if~} x\in Y\\
1 & \text{otherwise}.
\end{cases}
\end{equation*}
\end{proof}

Likewise the following result can be verified.
\begin{Prop}
Let $X$ be a locally Menger Hausdorff $P$-space. Given a Lindel\"{o}f subspace $L$ and an open set $V$ containing $L$, there exists a closed Menger subspace $Y$ such that $L\subseteq Y\subseteq V$. Furthermore, there is a continuous function $f:X\to [0,1]$ such that $f(x)=0$ for all $x\in L$ and $f(x)=1$ for all $x\in X\setminus Y$.
\end{Prop}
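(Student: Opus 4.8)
The plan is to mimic the proof of Proposition~\ref{T17}, with the finite subcover of the compact set $K$ there replaced by a countable subcover of the Lindel\"of set $L$, and with the ``Menger $\Rightarrow$ normal'' step (which was free under regularity) re-established by hand in the present $P$-space setting. First I would build the subspace $Y$. Since $L\subseteq V$ and $X$ is a locally Menger Hausdorff $P$-space, Theorem~\ref{T71} applied at each $x\in L$ yields an open set $U_x$ with $x\in U_x\subseteq\overline{U_x}\subseteq V$ and $\overline{U_x}$ Menger. As $L$ is Lindel\"of, the open cover $\{U_x:x\in L\}$ of $L$ admits a countable subcover $\{U_{x_n}:n\in\mathbb{N}\}$, and I set $Y=\cup_{n\in\mathbb{N}}\overline{U_{x_n}}$. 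Because $X$ is a $P$-space this countable union of closed sets is closed, and because the Menger property is preserved under countable unions, $Y$ is Menger. Moreover $L\subseteq\cup_{n\in\mathbb{N}}U_{x_n}\subseteq Y\subseteq V$, and since $\cup_{n\in\mathbb{N}}U_{x_n}$ is open and contained in $Y$ we obtain $L\subseteq\Int(Y)$.

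The crux is to show that $Y$ is normal. As a closed subspace of $X$, the space $Y$ is again a Hausdorff $P$-space, and being Menger it is Lindel\"of. I would first verify that such a space is regular: given $y\in Y$ and a closed set $C$ with $y\notin C$, the set $C$ is Lindel\"of, so choosing for each $c\in C$ disjoint open neighbourhoods $O_c\ni y$ and $W_c\ni c$ and extracting a countable subcover $\{W_{c_n}\}$ of $C$, the open set $W=\cup_n W_{c_n}$ contains $C$ while $O=\cap_n O_{c_n}$ is open by the $P$-space property and satisfies $O\cap W=\emptyset$. Regularity together with the Lindel\"of property then yields normality of $Y$ by the usual countable shrinking argument (for disjoint closed $A,B\subseteq Y$ pick countable families with $\overline{U_n}\cap B=\emptyset$ and $\overline{V_n}\cap A=\emptyset$ and replace $U_n,V_n$ by $U_n\setminus\cup_{k\leq n}\overline{V_k}$ and $V_n\setminus\cup_{k\leq n}\overline{U_k}$).

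Finally I would produce $f$ exactly as in Proposition~\ref{T17}. Since $L$ is Lindel\"of in the Hausdorff $P$-space $X$ it is closed in $X$, hence closed in $Y$; and $Y\setminus\Int(Y)$ is closed in $Y$ and, as $L\subseteq\Int(Y)$, disjoint from $L$. Urysohn's lemma in the normal space $Y$ then supplies a continuous $g:Y\to[0,1]$ with $g=0$ on $L$ and $g=1$ on $Y\setminus\Int(Y)$. Setting $f=g$ on $Y$ and $f=1$ on $X\setminus Y$ defines a map that is constantly $1$ on the closed set $X\setminus\Int(Y)$ and equals $g$ on the closed set $Y$; these two closed sets cover $X$ and the definitions agree (both equal $1$) on the overlap $Y\setminus\Int(Y)$, so the pasting lemma makes $f$ continuous, with $f=0$ on $L$ and $f=1$ on $X\setminus Y$ as required.

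I expect the normality step to be the main obstacle. Unlike the regular setting of Proposition~\ref{T17}, where a Menger (Lindel\"of) subspace is normal for free, here one must first extract regularity of the Lindel\"of Hausdorff $P$-space $Y$ from the defining feature that countable intersections of open sets remain open, and only then upgrade to normality; the remaining constructions of $Y$ and of $f$ are routine adaptations of the compact case.
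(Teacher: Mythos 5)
Your proposal is correct and is precisely the verification the paper has in mind: the paper offers no separate argument for this proposition (it only says the result ``can be verified'' likewise to Proposition~\ref{T17}), and your construction --- a countable subcover of the Lindel\"{o}f set $L$, the $P$-space property making the countable union $Y=\cup_{n\in\mathbb{N}}\overline{U_{x_n}}$ closed, preservation of the Menger property under countable unions, and then the Urysohn/pasting step --- is exactly the intended adaptation of that compact-case proof. The one streamlining worth noting is that your by-hand derivation of normality of $Y$ (regularity of the Lindel\"{o}f Hausdorff $P$-space plus the shrinking argument) can be skipped: condition (4) of Theorem~\ref{T71} already gives $X$ a basis of closed Menger neighbourhoods, i.e.\ $X$ is regular, so the closed Menger (hence Lindel\"{o}f, regular) subspace $Y$ is normal for free, exactly as in Proposition~\ref{T17}.
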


Another line of investigations is the following.
\begin{Def}
\label{D4}
A continuous mapping $f:X\rightarrow Y$ is said to be a Menger-covering mapping if for each Menger subspace $B$ of $Y$ there exists a Menger subspace $A$ of $X$ such that $f(A)=B$.
\end{Def}
For example, the projection mappings $p_1:X\times Y\to X$ and $p_2:X\times Y\to Y$ are Menger-covering mappings. Also it is easy to verify that Menger property is an inverse invariant under injective closed as well as injective Menger-covering mappings. Also it can be observed that the locally Menger property is an inverse invariant under injective Menger-covering mappings.
Note that the Menger (respectively, locally Menger) property is not an inverse invariant under Menger-covering mappings.
The behaviour of MG-spaces under the Menger-covering mappings seems to be an interesting line of investigations.

{}

\begin{thebibliography}{50}

\bibitem{p-space1} A.V. Arhangel'skii, A class of spaces which contains all metric and all locally compact spaces, Mat. Sb. (N.S.), 67(109)(1) (1965), 55--88, English translation: Amer. Math. Soc. Transl., 92 (1970), 1--39.

\bibitem{p-space} A.V. Arhangel'skii, Mappings and spaces, Russian Math. Surveys, 21(4) (1966), 115--162.

\bibitem{AVR} A.V. Arhangel'skii, Remainders in compactifications and generalized metrizability properties, Topology Appl., 150 (2005), 79--90.

\bibitem{s-space1} A.V. Arhangel'skii, M.M. Choban, Some generalizations of the concept of a $p$-space, Topology Appl., 158 (2011), 1381--1389.

\bibitem{AGCL} A.V. Arhangel'skii, A generalization of \v{C}ech-complete spaces and Lindel\"{o}f $\Sigma$-spaces, Comment. Math. Univ. Carolin., 54(2) (2013), 121--139.

\bibitem{s-space} A.V. Arhangel'skii, Remainders of metrizable and close to metrizable spaces, Fund. Math., 220 (2013), 71--81.

%

\bibitem{Cech} E. \v{C}ech, On biocompact space, Ann. Math., 38 (1937), 823--844.

\bibitem{DCPD} D. Chandra, P. Das, U. Samanta, On $\tau$-covers usings ideals ans some its consequences, Quaest. Math., 42(2) (2019), 243--256.


%
%

\bibitem{Engelking} R. Engelking, General Topology, Heldermann Verlag, Berlin, 1989.

\bibitem{WMB} A.E. Eysen, S. \"{O}z\c{c}a\u(g), Weaker forms of the Menger property in bitopological spaces, Quaest. Math., 41(7) (2018), 877--888.



\bibitem{SPC} M. Henriksen, J.R. Isbell, Some properties of compactifications, Duke Math. J., 25 (1958), 83--106.

\bibitem{BPFS} L. Hol\'{a}, Lj.D.R. Ko\v{c}inac, Boundedness properties in function spaces, Quaest. Math., 41(6) (2018), 829--838.

\bibitem{EGT} K.P. Hart, J. Nagata, J.E. Vaughan, Encyclopedia of General Topology, Elsevier Science Publishers B. V., Amsterdam, 2004.


\bibitem{coc2} W. Just, A.W. Miller, M. Scheepers, P.J. Szeptycki, The combinatorics of open covers (II), Topology Appl., 73 (1996), 241--266.

\bibitem{LjSM} Lj.D.R. Ko\v{c}inac, Star-Menger and related spaces, Publ. Math. Debrecen, 55 (1999), 421--431.

\bibitem{PRSP} Lj.D.R. Ko\v{c}inac, The Pixley-Roy topology and selection principles, Questions Answers Gen. Topology, 19(2) (2001), 219--225.

\bibitem{survey} Lj.D.R. Ko\v{c}inac, Star selection principles: A survey, Khayyam J. Math., 1(1) (2015), 82--106.

\bibitem{survey1} Lj.D.R. Ko\v{c}inac, Variations of classical selection principles: An overview, Quaest. Math., 43 (8) (2020), 1121--1153.


\bibitem{Bi-quotient} E.A. Michael, Bi-quotient maps and cartesian products of quotient maps, Ann. Inst. Fourier (Grenoble), 18(2) (1968), 287--302.

\bibitem{Mrowka} S. Mr\'{o}wka, On completely regular spaces, Fund. Math., 41 (1954), 105--106.

\bibitem{Nagami} K. Nagami, $\Sigma$-spaces, Fund. Math., 65 (1969), 169--192.


\bibitem{WMP} B.A. Pansera, Weaker forms of the Menger property, Quaest. Math., 35(2) (2012), 161--169.


\bibitem{Stone} M.H. Stone, Applications of the theory of Boolean rings to general topology, Trans. Amer. Math. Soc., 41 (1937), 375--481.

\bibitem{coc1} M. Scheepers, Combinatorics of open covers I: Ramsey theory, Topology Appl., 69 (1996), 31--62.

\bibitem{MSPC} M. Scheepers, Selection principles and covering properties in topology, Note. Mat., 22(2) (2003/2004), 3-41.


\bibitem{ARWR} Y.-K. Song, Remarks on almost Rothberger spaces and weakly Rohberger spaces, Quaest. Math., 38(3) (2015), 317--325.

\bibitem{Lynn} L.A. Steen, J.A. Seebach Jr., Counterexamples in Topology, Springer, New York, 1978.

\bibitem{PMS} P. Szewczak, B. Tsaban, Products of Menger spaces: A combinatorial approach, Ann. Pure Appl. Logic, 168(1) (2017), 1--18.

\bibitem{CST} B. Tsaban, L. Zdomskyy, Combinatorial images of sets of reals and semifilter trichotomy, J. Symb. Log., 73(4) (2008), 1278-1288.


\bibitem{Wang} H. Wang, W. He, On remainders of locally $s$-spaces, Topology Appl., 278 (2020), 107231.


\bibitem{Willard} S. Willard, General Topology, Addison Wesley Publishing Co., 1970.

\bibitem{Box} L. Wingers, Box products and Hurewicz spaces, Topology Appl., 64 (1995), 9--21.

\end{thebibliography}
\end{document}